\tikzset{join/.code=\tikzset{after node path={%
\ifx\tikzchainprevious\pgfutil@empty\else(\tikzchainprevious)%
edge[every join]#1(\tikzchaincurrent)\fi}}}
\tikzset{>=stealth',every on chain/.append style={join},
         every join/.style={->}}
\tikzstyle{labeled}=[execute at begin node=$\scriptstyle,
\theoremstyle{plain}
\newtheorem{theorem}{Theorem}[section]
\newtheorem{corollary}[theorem]{Corollary}
\newtheorem{lemma}[theorem]{Lemma}
\newtheorem{proposition}[theorem]{Proposition}
\newtheorem{problem}[theorem]{Problem}
\newtheorem{conjecture}[theorem]{Conjecture}
\theoremstyle{definition}
\newtheorem*{remark}{Remark}
\theoremstyle{definition}
\newcommand{\showcomments}{yes}
\newsavebox{\commentbox}
\newcommand{\spc}{\hspace{1 mm}}
\newcommand{\bbr}{\mathbb{R}}
\newcommand{\bbn}{\mathbb{N}}
\newcommand{\bbz}{\mathbb{Z}}
\newcommand{\XT}{\tilde{X}}
\newcommand{\xt}{\tilde{x}}
\newcommand{\yt}{\tilde{y}}
\newcommand{\bbp}{\mathbb{P}}
\newcommand{\bndry}{\partial}
\newcommand{\C}{\mathcal{C}}
\newcommand{\relbndry}{\bndry (\Gamma,\bbp)}
\newcommand{\R}{\mathcal{R}}
\newcommand{\N}{\mathcal{N}}
\newcommand{\overB}{\overline{B}}
\newcommand{\F}{\mathcal{F}}
\newcommand{\D}{\mathcal{D}}
\newcommand{\lacton}{\curvearrowright}
\DeclareMathOperator{\isom}{Isom}
\DeclareMathOperator{\CAT}{CAT}
\DeclareMathOperator{\stab}{Stab}
\DeclareMathOperator{\diam}{diam}
\DeclareMathOperator{\orb}{Orb}
\newcommand{\presentation}[2]{\langle\, {#1} \mid {#2} \,\rangle}
\newcommand{\bigset}[2]{ \bigl\{ \, {#1} \bigm| {#2} \, \bigr\} }
\newcommand{\Swiatkowski}{{\'{S}}wi{\k{a}}tkowski}
\begin{document}

\title[Boundary classification and 2-ended splittings]{%
Boundary classification and 2-ended splittings of groups with isolated flats}

\author[M.~Haulmark]{Matthew Haulmark}
\address{Department of Mathematics\\
1326 Stevenson Center\\
Vanderbilt University\\
Nashville, TN 37240 USA}
\email{m.haulmark@vanderbilt.edu}

\begin{abstract}
In this paper we provide a classification theorem for 1-dimensional boundaries of groups with isolated flats. Given a group $\Gamma$ acting geometrically on a $\CAT(0)$ space $X$ with isolated flats and 1-dimensional boundary, we show that if $\Gamma$ does not split over a virtually cyclic subgroup, then $\bndry X$ is homeomorphic to a circle, a Sierpinski carpet, or a Menger curve. This theorem generalizes a theorem of Kapovich-Kleiner, and resolves a question due to Kim Ruane.

We also study the relationship between local cut points in $\bndry X$ and splittings of $\Gamma$ over $2$-ended subgroups. In particular, we generalize a theorem of  Bowditch by showing that the existence of a local point in $\bndry X$ implies that $\Gamma$ splits over a $2$-ended subgroup.
\end{abstract}

\keywords{ $\CAT(0)$, Isolated Flats, Group Boundaries, JSJ Decompositions, Splittings, Ends of Spaces}

\date{\today}

\maketitle

\section{Introduction}
\label{sec:Introduction}

When a group $\Gamma$ acts discretely on a geometric space $X$, we can often compactify $X$ by attaching a ``boundary at infinity'' $\bndry X$ to $X$. In the presence of non-positive curvature, $\Gamma$ has an induced action by homeomorphisms on the boundary. There are strong connections between the topological properties of $\bndry X$ and the algebraic properties of $\Gamma$. A natural question posed by Kapovich and Kleiner \cite{KK} is: which topological spaces occur as boundaries of groups?

In \cite{KK} Kapovich and Kleiner prove a classification theorem for boundaries of one-ended hyperbolic groups. They show that if the boundary is $1$-dimensional and the group does not split over a virtually cyclic subgroup then the boundary of the group is either a circle, a Sierpinski carpet, or a Menger curve.

\begin{problem}[K. Ruane]
Can the Kapovich-Kleiner result be extended to some natural family of $\CAT(0)$ groups?
\end{problem}

Kapovich and Kleiner's result relies heavily on JSJ results due to Bowditch \cite{Bow2}. Bowditch's results relate the existence of local cut points in the boundary to the existence of cut pairs, which is further related to two-ended splittings of the group. For $\CAT(0)$ groups Papasoglu and Swenson \cite{PS09} extend the connection between cut pairs and two-ended splittings, but leave the issue of local cut points completely unresolved. 

In this article we resolve this issue for groups acting {\it geometrically} (i.e. properly, cocompactly, and by isometries) on a $\CAT(0)$ space with isolated flats (see \cite{HK1}) and obtain the following result:

 \begin{theorem}[Main Theorem]
\label{theorem:MainTheorem}
Let $\Gamma$ be a group acting geometrically on a $\CAT(0)$ space $X$ with isolated flats. Assume $\bndry X$ is one-dimensional. If $\Gamma$ does not split over a virtually cyclic subgroup then one of the following holds:
\begin{enumerate}
\item $\bndry X$ is a circle
\item $\bndry X$ is a Sierpinski carpet
\item $\bndry X$ is a Menger curve.
\end{enumerate}
\end{theorem}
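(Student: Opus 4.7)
The plan is to follow the overall strategy of Kapovich--Kleiner \cite{KK}, with the hyperbolic JSJ machinery replaced by its analogue in the $\CAT(0)$ isolated flats setting. The proof reduces to verifying the hypotheses of a purely topological recognition theorem for $1$-dimensional Peano continua.

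First I would establish basic topology of $\bndry X$. Since any splitting over a finite group is in particular a splitting over a virtually cyclic group, the hypothesis rules these out, so by Stallings $\Gamma$ is one-ended and $\bndry X$ is connected. Local connectedness for $\CAT(0)$ groups with isolated flats can be obtained from (a refinement of) the results of Hruska--Ruane, using that peripheral flat-stabilizer subgroups contribute only controlled, locally connected pieces; combined with compactness and metrizability of $\bndry X$, this makes $\bndry X$ a $1$-dimensional Peano continuum.

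Next I would prove the auxiliary statement advertised in the abstract: a local cut point in $\bndry X$ forces $\Gamma$ to split over a $2$-ended subgroup. This is the generalization of Bowditch \cite{Bow2} and is the heart of the paper. Starting from a local cut point $c \in \bndry X$, the plan is to promote $c$ to a genuine separating cut pair in $\bndry X$ by a careful analysis that treats the points at infinity of flats separately from conical limit points (using that the peripheral subgroups are virtually abelian of rank $\geq 2$, so their limit sets are spheres with no local cut structure of the wrong type). One then feeds this cut pair into the Papasoglu--Swenson \cite{PS09} correspondence between cut pairs and $2$-ended splittings to extract the desired splitting.

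Given that step, the contrapositive together with the non-splitting hypothesis shows $\bndry X$ has no local cut points. The proof then becomes purely topological: a connected, locally connected, compact, metric, $1$-dimensional space with no local cut points is, by the topological recognition theorem of Kapovich--Kleiner (via Whyburn in the planar case and Anderson in the non-planar case), homeomorphic to $S^1$, the Sierpinski carpet, or the Menger curve. The main obstacle is the local-cut-point-to-splitting step: in the hyperbolic setting Bowditch exploits the convergence-group dynamics on $\bndry X$ and a delicate clustering analysis of local cut points, and each ingredient must be re-established in the relatively hyperbolic framework where rank-$n$ peripheral flats obstruct the direct translation of Bowditch's arguments.
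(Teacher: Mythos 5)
Your outline (connected $+$ locally connected $+$ no local cut points $+$ planar/non-planar dichotomy) matches the paper's, but the two steps where you are vague are exactly where the content lies, and your proposed mechanisms for them do not work as stated. For the local-cut-point-to-splitting step, ``promote $c$ to a separating cut pair in $\bndry X$ and feed it into Papasoglu--Swenson \cite{PS09}'' is precisely the passage the paper identifies as unresolved: Papasoglu--Swenson handle cut pairs, and the gap is getting from a local cut point to anything they can use. The paper does not take that route. It splits into two cases according to whether $\xi$ lies in $\bndry F$ for some $F\in\F$. If not, it uses Tran's theorem (Theorem \ref{theorem:TranTheorem}) plus upper semicontinuity of the null decomposition $\{\bndry F\}$ to push $\xi$ forward to a non-parabolic local cut point of the Bowditch boundary, and then invokes the author's separate relatively hyperbolic splitting theorem (Theorem \ref{theorem:splitting in relative}), giving Proposition \ref{proposition:Cutptcase1}. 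If $\xi\in\bndry F$, there is nothing to promote: the paper proves such a point is never a local cut point (Theorem \ref{theorem:flat not cut point}), and this takes most of the paper --- one shows $\stab_\Gamma(F)$ (virtually $\bbz^n$) acts properly and cocompactly on $\bndry X\setminus\bndry F$, that there are finitely many components each with full-rank stabilizer, that this set carries an $H$-equivariant geodesic metric (via Bing's theorem), and then a coarse path-connectivity argument near $\xi$. Your remark that the limit sets of peripheral subgroups ``are spheres with no local cut structure of the wrong type'' is not a substitute for any of this: the issue is whether $\bndry X$ is locally separated at a point of the sphere $\bndry F$, not the internal structure of $\bndry F$. (Also note the splitting statement needs the hypothesis $\bndry X\not\cong S^1$, as in Theorem \ref{theorem:CutptTheorem}; otherwise triangle-group-type examples with circle boundary contradict your contrapositive, and the circle case must be carried as conclusion (1) rather than eliminated.)

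The second gap is your final step, which is not ``purely topological.'' Anderson's characterization of the Menger curve requires that \emph{no non-empty open subset} is planar, not merely that the space is non-planar; a $1$-dimensional Peano continuum without local cut points that is non-planar but has planar open pieces satisfies neither Whyburn's nor Anderson's hypotheses, so the trichotomy you assert is false as a topological statement. The paper closes this using dynamics: minimality of the $\Gamma$-action on $\bndry X$ (proved via the quotient to the Bowditch boundary, Lemma \ref{lemma: minimalaction}), Claytor's embedding theorem \cite{Cla} to produce a non-planar graph $K\subset\bndry X$, and Ballmann--Buyalo rank-one periodic lines \cite{BB1} with north--south-type dynamics to place a homeomorphic copy of $K$ in every open set (Proposition \ref{proposition: movingaround}); in the isolated flats setting even the Tits-distance hypothesis of \cite{BB1} needs the Hruska--Kleiner description of Tits components. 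Your proposal should flag and carry out this dynamical argument rather than fold it into the citation of Anderson.
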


  A key tool used by Kapovich and Kleiner is the topological characterization of the Menger curve due to R.D. Anderson \cite{A58a, A58b}. Anderson's theorem states that a compact metric space $M$ is a Menger curve provided: M is $1$-dimensional, M is connected, M is locally connected, M has no local cut points, and no non-empty open subset of M is planar. We note that if the last condition is replaced with ``M is planar,'' then we have the topological characterization of the Sierpinski carpet (see Whyburn \cite{Whyburn}).

  Prior to Kapovich and Kleiner's theorem \cite{KK}, results of Bestvina and Mess \cite{BestvinaMess}, Swarup \cite{Swa}, and Bowditch \cite{Bow3} had shown that the boundary of a one-ended hyperbolic group $G$ is connected and locally connected. The planarity issue is easily dealt with using the dynamics of the action of the group on its boundary, leaving only the local cut point issue. However, Bowditch has shown \cite{Bow2} that if $\bndry G$ is not homeomorphic to a circle, then $\bndry G$ has a local cut point if and only if $G$ splits over a two-ended subgroup.

  We follow a similar outline to prove Theorem \ref{theorem:MainTheorem}. The question of which groups with isolated flats have locally connected boundary has been completely determined by Hruska and Ruane \cite{HR1}. So we will begin by assuming, for now, that $\bndry X$ is locally connected. In the isolated flats setting the planarity is again easily dealt with using an argument similar to that of Kapovich and Kleiner, leaving only the local cut point issue. So in order to complete the proof of Theorem \ref{theorem:MainTheorem}, the remaining difficulty is understanding the connection between local cut points in $\bndry X$ and splittings of $\Gamma$. 
  
  We prove the following splitting theorem which is independent of the dimension of $\bndry X$ and thus more general than is required for the proof of Theorem \ref{theorem:MainTheorem}: 
  
\begin{theorem}
\label{theorem:CutptTheorem}
Let $\Gamma$ be a group acting geometrically on a $\CAT(0)$ space $X$ with isolated flats. Suppose $\bndry X$ is locally connected and not homeomorphic to $S^1$. If $\Gamma$ does not split over a virtually cyclic subgroup, then $\bndry X$ has no local cut points.
\end{theorem}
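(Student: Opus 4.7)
My plan is to prove the contrapositive: assuming $\bndry X$ has a local cut point $p$, I want to produce a splitting of $\Gamma$ over a virtually cyclic subgroup. By work of Hruska--Kleiner, $\Gamma$ is hyperbolic relative to the collection $\mathbb{P}$ of stabilizers of maximal flats, and $\bndry X$ decomposes as the disjoint union of the peripheral sphere boundaries $\bndry F$ of the maximal flats $F$ together with the conical limit set. The Papasoglu--Swenson machinery already bridges the gap between cut pairs and $2$-ended splittings for $\CAT(0)$ groups. So the heart of the proof is to promote the local cut point $p$ into a cut pair $\{a,b\}$ of $\bndry X$ whose stabilizer is virtually cyclic.

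I would split into cases according to whether $p$ is conical or peripheral. In the conical case, the approach mirrors Bowditch's original argument for hyperbolic groups as closely as possible. Let $U$ be a connected neighborhood of $p$ with $U\setminus\{p\}$ disconnected, and choose two components accumulating at $p$. Using convergence dynamics at the conical point $p$, pick $\gamma_n\in\Gamma$ with $\gamma_n p\to q$ while $\gamma_n^{-1}$ uniformly contracts on compacts missing $p$. Passing to limits of the punctured components under the $\gamma_n$ produces a configuration that contains a separating pair in $\bndry X$; local connectedness prevents the components from collapsing in the limit, and yields a genuine cut pair with non-trivial stabilizer.

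In the peripheral case $p$ lies on a sphere $\bndry F$, and $\stab(F)$ acts cocompactly on $F$ as a virtually abelian group of rank $k=\dim F$. If $k\geq 3$, $\bndry F$ is a sphere of dimension $\geq 2$ and has no local cut points intrinsically, so one combines the $\Gamma$-equivariant structure with the conical analysis on nearby non-peripheral points to either contradict the local cut point hypothesis at $p$ or reduce to the conical case. If $k=2$, then $\bndry F$ is a circle, and pairs of antipodal points on $\bndry F$ are easily shown to be cut pairs of $\bndry X$ stabilized by infinite cyclic subgroups of $\stab(F)$; Papasoglu--Swenson then delivers the splitting.

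The main obstacle is the conical case. In the hyperbolic setting Bowditch has at his disposal a uniform convergence group action on $\bndry G$, which lets him extract cut pairs as limits without interference. In the isolated flats setting, sequences in $\Gamma$ can drift into peripheral subgroups, so limits of components of $\bndry X\setminus\{p\}$ may degenerate onto entire flat boundary spheres rather than onto single points; controlling this peripheral noise is the technical crux of the argument. The hypotheses that $\bndry X$ is locally connected and not homeomorphic to $S^1$ enter precisely to exclude the degenerate limit configurations and to guarantee that the extracted cut pair has virtually cyclic, rather than peripheral, stabilizer, so that Papasoglu--Swenson produces a splitting of the desired type.
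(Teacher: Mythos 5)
Your peripheral case is where the proposal breaks down most clearly. The claim that when $\dim F=2$ ``pairs of antipodal points on $\bndry F$ are easily shown to be cut pairs of $\bndry X$'' is false, and note that as stated it does not even use the hypothesis that $p$ is a local cut point: it would prove that \emph{every} group acting geometrically on a $\CAT(0)$ space with isolated $2$-dimensional flats and locally connected boundary splits over a $2$-ended subgroup. That is contradicted by the examples in this paper (Coxeter groups with Sierpinski carpet boundary in Section \ref{sec: Sierpinski}, the non-hyperbolic Menger curve example in Section \ref{sec: Tripleexample}), since neither the carpet nor the Menger curve is disconnected by removing two points. The geometry is in fact the opposite of what you assume: removing two points of $\bndry F$ disconnects the circle $\bndry F$, but the two arcs are reconnected through $\bndry X\setminus\bndry F$. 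The paper spends Sections \ref{sec:ProperCocompact}--\ref{sec:inbndryF} proving exactly this, in the strong form that no point of $\bndry F$ is a local cut point: $\stab_\Gamma(F)$, virtually $\bbz^n$, acts properly and cocompactly on $\bndry X\setminus\bndry F$ (via the orthogonal-ray relation $\R$), that complement has finitely many components each with full-rank stabilizer, and an $H$-equivariant geodesic metric on it (Bing's theorem plus a pull-back length metric) lets one connect points near $\xi\in\bndry F$ by paths that stay near $\xi$ and avoid any prescribed compact set. Your $k\geq 3$ subcase (``combine with the conical analysis to either contradict or reduce'') is only a statement of intent, not an argument.

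The conical case also has a genuine gap: you correctly identify that $\Gamma\lacton\bndry X$ lacks convergence dynamics because of the flats, and you call controlling this ``the technical crux,'' but you supply no mechanism for it; promoting a local cut point to a cut pair with virtually cyclic stabilizer is precisely the step Papasoglu--Swenson leave unresolved, so citing them does not close the loop. The paper takes a different route that avoids doing dynamics in $\bndry X$ at all: by Tran's theorem, $\relbndry$ is the quotient of $\bndry X$ collapsing each $\bndry F$; since $\{\bndry F\}_{F\in\F}$ is a null family, the decomposition is upper semi-continuous, and a local cut point not lying on any $\bndry F$ pushes forward to a non-parabolic local cut point of $\relbndry$ (Lemma \ref{lemma:local cut pts to local cut pts}); then the author's relatively hyperbolic splitting theorem (Theorem \ref{theorem:splitting in relative}), proved in the Bowditch boundary where convergence-group dynamics are available, yields the $2$-ended splitting. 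If you want to salvage your outline, route the conical case through $\relbndry$ rather than arguing directly in $\bndry X$, and replace the peripheral case by a proof that points of $\bndry F$ are never local cut points.
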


\medskip
Techniques developed by the author for the proof of Theorem \ref{theorem:CutptTheorem} have already been used by Hruska and Ruane \cite{HR1} in the proof of their local connectedness theorem. In the special case when the boundary is one-dimensional Hruska and Ruane's \cite{HR1} theorem shows reduces to the statement that $\bndry X$ is locally connected if $\Gamma$ does not split over a two-ended subgroup (see Theorem \ref{theorem: HR}). So, we obtain a simplified version of Theorem \ref{theorem:CutptTheorem}, which is used in the proof of Theorem \ref{theorem:MainTheorem}.
  
\begin{corollary}
Let $\Gamma$ be a one-ended group acting geometrically on a $\CAT(0)$ space $X$ with isolated flats and assume that $\bndry X$ is 1-dimensional and not homeomorphic to $S^1$. If $\Gamma$ does not split over a virtually cyclic subgroup, then $\bndry X$ has no local cut points.
\end{corollary}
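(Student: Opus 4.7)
The plan is to derive this corollary as an immediate consequence of Theorem \ref{theorem:CutptTheorem}, with the one-dimensional boundary hypothesis playing a role only in securing the local connectedness required by that theorem.

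First I would reduce the corollary to verifying local connectedness of $\bndry X$. Every $2$-ended group is virtually $\bbz$, hence virtually cyclic, so the assumption that $\Gamma$ admits no splitting over a virtually cyclic subgroup automatically rules out any splitting over a $2$-ended subgroup. Invoking the one-dimensional specialization of the Hruska--Ruane theorem recorded as Theorem \ref{theorem: HR}---namely, that when $\bndry X$ is $1$-dimensional, $\bndry X$ is locally connected whenever $\Gamma$ does not split over a $2$-ended subgroup---immediately yields that $\bndry X$ is locally connected.

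With local connectedness in hand, all of the hypotheses of Theorem \ref{theorem:CutptTheorem} are in place: $\Gamma$ acts geometrically on the $\CAT(0)$ space $X$ with isolated flats, $\bndry X$ is locally connected and not homeomorphic to $S^1$, and $\Gamma$ admits no splitting over a virtually cyclic subgroup. Applying Theorem \ref{theorem:CutptTheorem} then gives that $\bndry X$ has no local cut points, which is the desired conclusion. Note that the one-endedness hypothesis is in fact redundant under these assumptions: by Stallings' theorem a finitely generated group is one-ended as soon as it admits no splitting over a finite subgroup, and every finite subgroup is virtually cyclic. There is no real obstacle in this derivation; all of the difficulty has been absorbed into the earlier Theorem \ref{theorem:CutptTheorem} and the Hruska--Ruane local connectedness theorem, and the corollary's purpose is to package their combination into the precise form that will be convenient inside the proof of Theorem \ref{theorem:MainTheorem}.
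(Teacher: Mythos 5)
Your proposal is correct and matches the paper's intended derivation: the paper obtains this corollary by observing that in the $1$-dimensional case the Hruska--Ruane theorem (via Corollary \ref{corollary: HRcorollary}) supplies local connectedness of $\bndry X$ when there is no splitting over a $2$-ended (hence virtually cyclic) subgroup, and then applying Theorem \ref{theorem:CutptTheorem}. Your side remark that one-endedness is forced by the no-splitting hypothesis is a harmless addition not present in the paper.
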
  

Theorem \ref{theorem:CutptTheorem} fills a gap in the JSJ theory literature on $\CAT(0)$ groups and most of this paper is spent on the proof. We mention that the significance of this gap in the literature has also been observed by \Swiatkowski\, \cite{Swi16}. 

In \cite{Bow2} Bowditch studied local cut points in boundaries of hyperbolic groups and their relation to so called JSJ-splittings. As mentioned above, Bowditch showed that for a one-ended hyperbolic group $G$ which is not cocompact Fuchsian the existence of a local cut point is equivalent to a splitting of $G$ over a $2$-ended subgroup. Generalizations of Bowditch's results have been studied by Papasoglu-Swenson \cite{PS05} \cite{PS09}, and Groff \cite{G} for $\CAT(0)$ and relatively hyperbolic groups, respectively.

Groups with isolated flats have a natural relatively hyperbolic structure \cite{HK1}, and there is a strong relationship between $\bndry X$ and the Bowditch boundary $\relbndry$. Analogous to the limit set of a Kleinian group, the Bowditch boundary $\relbndry$ was introduced by Bowditch \cite{Bow1} and used to study splittings of hyperbolic groups. In general $\relbndry$ may have infintely many global cut points. In fact, the global cut point structure of $\relbndry$ is key to a general theory of splittings \cite{Bow4}.

Using cut pairs instead of local cut points Groff \cite{G} obtains a partial extension of Bowditch's JSJ tree construction \cite{Bow2} for relatively hyperbolic groups, and Guralnik \cite{Gur} observed that in the special case that the relative boundary $\relbndry$ has no global cut points, then many of Bowditch's results \cite{Bow2} about the valence of local cut points in the boundary of a hyperbolic group translate directly to the relatively hyperbolic setting. Their results were subsequently used by Groves and Manning \cite{GM} to show that if $\relbndry$ has no global cut points and all the peripheral subgroups are one-ended, then the existence of a local cut point in $\relbndry$ is equivalent to the existence of a splitting of $\Gamma$  relative to $\mathbb{P}$ over a non-parabolic $2$-ended subgroup. The relative boundary (or Bowditch boundary) $\relbndry$ is different from the $\CAT(0)$ boundary mentioned above. 

In \cite{Hau17a} the author investigates local cut points in $\relbndry$ and provides a splitting theorem for relatively hyperbolic groups without making any assumptions about global cut points. Namely, he shows that under some very modest conditions on the peripheral subgroups, the existence of a non-parabolic local cut point in $\relbndry$ implies that $\Gamma$ splits over a $2$-ended subgroup (see Theorem \ref{theorem:splitting in relative}). Because of the close relationship between $\bndry X$ and $\relbndry$, this splitting theorem will be used in Section \ref{sec:notinbndryF} to show that the existence of a local cut point of $\bndry X$ which is not in the boundary of a flat implies that $\Gamma$ splits over a $2$-ended subgroup.

We conclude the paper by discussing applications of Theorem \ref{theorem:MainTheorem}. In particular, in Section \ref{sec: Tripleexample} we discuss groups with Menger curve boundary and in Section \ref{sec: Sierpinski} we generalize the work of \Swiatkowski\, \cite{Swi16} to obtain the following result:
 
\begin{theorem}
\label{theorem: IFSwiatkowski}
Let $(W,S)$ be a Coxeter system such that $W$ has isolated flats. Assume that the nerve $L$ of the system is planar, distinct from simplex, and distinct from a triangulation of $S^1$. If the labeled nerve $L^{\bullet}$ of $(W,S)$ is distinct from a labeled wheel and inseparable, then $\bndry \Sigma$ is homeomorphic to the Sierpinski carpet. 
\end{theorem}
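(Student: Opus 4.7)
The plan is to apply the Main Theorem (Theorem \ref{theorem:MainTheorem}) to the geometric action of $W$ on its Davis complex $\Sigma$ and then to rule out the circle and Menger curve cases using the hypotheses on the (labeled) nerve.

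To apply Theorem \ref{theorem:MainTheorem}, I must verify that $W$ acts geometrically on a $\CAT(0)$ space with isolated flats (given), that $\bndry \Sigma$ is one-dimensional, and that $W$ does not split over a virtually cyclic subgroup. One-dimensionality of $\bndry \Sigma$ follows from the planarity of $L$: planarity of the nerve bounds the dimension of the Davis complex, and the conditions that $L$ is not a simplex (so $W$ is infinite) and not a triangulation of $S^1$ prevent the boundary from degenerating. The delicate step is translating ``$L^\bullet$ is inseparable'' and ``$L^\bullet$ is not a labeled wheel'' into the algebraic statement that $W$ has no splitting over a virtually cyclic subgroup. Since virtually cyclic subgroups in a Coxeter group are either finite (spherical) or infinite dihedral, I would combine \Swiatkowski's labeled-nerve analysis \cite{Swi16} with visual splitting techniques \`a la Mihalik--Tschantz: inseparability of $L^\bullet$ is precisely the obstruction to a visual splitting of $W$ over a spherical subgroup, and the ``not a labeled wheel'' condition is designed to rule out visual splittings over infinite dihedral subgroups. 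Since splittings of a Coxeter group over virtually cyclic subgroups can be made visual, these two combinatorial conditions jointly yield the required hypothesis.

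With the hypotheses of Theorem \ref{theorem:MainTheorem} in place, $\bndry \Sigma$ is homeomorphic to a circle, a Sierpinski carpet, or a Menger curve. I would rule out the circle case by invoking the classical characterization that, for a Coxeter group, $\bndry \Sigma \cong S^1$ occurs exactly when $L$ is a triangulation of $S^1$ (corresponding to $W$ being, up to finite index, a cocompact reflection group on $\mathbb{H}^2$ or $\mathbb{E}^2$), which is excluded by hypothesis. To rule out the Menger curve case, I would use the planarity of $L$: a planar embedding of $L$ induces compatible planar embeddings of all links in $\Sigma$, and these can be assembled into a planar embedding of $\bndry \Sigma$, with the boundary circles of the isolated flats placed consistently alongside the rest of the boundary. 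Since the Menger curve is non-planar, the only remaining possibility is that $\bndry \Sigma$ is a Sierpinski carpet.

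The main obstacle is the dictionary between the combinatorial conditions on $L^\bullet$ and the no-virtually-cyclic-splitting hypothesis, and in particular the claim that the ``labeled wheel'' condition captures exactly the visual $D_\infty$-splittings of $W$; this requires a careful case analysis of which labeled configurations in $L^\bullet$ give rise to two-ended visual splittings. A secondary but still nontrivial point is verifying in the isolated-flats setting that planarity of $L$ actually produces a planar embedding of $\bndry \Sigma$: one must check that the boundary circles of flats embed compatibly with the rest of $\bndry \Sigma$. The dimension calculation and the circle-case exclusion are comparatively standard consequences of results in the Coxeter and $\CAT(0)$ literature.
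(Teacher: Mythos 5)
Your overall outline is the same as the paper's: check the hypotheses of Theorem \ref{theorem:MainTheorem} for the action of $W$ on $\Sigma$, use planarity of $\bndry\Sigma$ (the paper takes connectedness, planarity and $1$-dimensionality directly from \Swiatkowski's Lemmas 2.3--2.5) to exclude the Menger curve, and then exclude the circle. However, your dictionary between the nerve hypotheses and what they are used for contains two genuine errors. First, ``inseparable'' is by definition more than the absence of a separating simplex: it also forbids separating vertex pairs and separating labeled suspensions, and these are exactly the visual configurations corresponding to splittings over two-ended subgroups (an infinite dihedral group, possibly times a finite group). So inseparability alone, via the visual splitting results of Mihalik--Tschantz and \Swiatkowski, yields that $W$ does not split over any virtually cyclic subgroup --- this is the one-line citation the paper uses. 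Your claim that ``distinct from a labeled wheel'' is what rules out visual $D_\infty$-splittings is false (plenty of non-wheel labeled nerves have separating labeled suspensions or separating non-adjacent vertex pairs), and under your weakened reading of inseparability (only spherical splittings) the no-virtually-cyclic-splitting hypothesis of Theorem \ref{theorem:MainTheorem} would not be established by your argument.

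Second, your circle exclusion rests on the assertion that $\bndry\Sigma\cong S^1$ happens exactly when $L$ is a triangulation of $S^1$. That is false: a labeled wheel also gives circle boundary, since then $W\cong \mathbb{Z}/2\times W'$ where $W'$ is a planar (Euclidean or hyperbolic) reflection group whose Davis complex has boundary $S^1$. The ``not a labeled wheel'' hypothesis is needed precisely at this point, which is how the paper argues: if $W$ is hyperbolic, \Swiatkowski\ already excludes $S^1$; if $W$ is non-hyperbolic and $\bndry\Sigma\cong S^1$, then $\Sigma$ contains a single flat, so $W$ is a $2$-dimensional Euclidean group, and planarity of $L$ forces $L$ to be a wheel or a triangulation of $S^1$, contradicting the hypotheses. (Your plan to assemble a planar embedding of $\bndry\Sigma$ from a planar embedding of $L$ is essentially \Swiatkowski's planarity lemma, which the paper simply cites; as a sketch that part is acceptable.) So the proof goes through only after reassigning the roles of the two combinatorial hypotheses: inseparability carries the entire splitting argument, and ``not a labeled wheel'' is used only to kill the circle case.
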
 

Definitions of the terms used in Theorem \ref{theorem: IFSwiatkowski} can be found in Section \ref{sec: Sierpinski}. 

In \cite{DO01} Davis and Okun show that if $W$ is a Coxeter group whose nerve $L$ is planar, then $W$ acts properly on a $3$-manifold. Consequently, Theorem \ref{theorem: IFSwiatkowski} is in line with the following extension of a conjecture due to Kapovich and Kleiner \cite{KK}:

\begin{conjecture}
Let $\Gamma$ be a $\CAT(0)$ group with isolated flats and Sierpinski carpet boundary. Then $\Gamma$ acts properly on a contractible $3$-manifold.
\end{conjecture}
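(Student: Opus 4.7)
The plan is to verify the hypotheses of the Main Theorem (Theorem~\ref{theorem:MainTheorem}) for the action of $W$ on the Davis complex $\Sigma$, and then to rule out two of the three outcomes using the structural hypotheses on the (labeled) nerve. First, $W$ acts geometrically on $\Sigma$, which is $\CAT(0)$, and by assumption $W$ has isolated flats, so the standing hypothesis of the Main Theorem holds. Second, since $L$ is planar and distinct from a simplex and from a triangulation of $S^1$, the relevant case of the Sierpinski carpet statement concerns nerves that are (essentially) planar graphs, and Davis's formula for $\mathrm{vcd}(W)$ together with the standard identification of $\dim \bndry \Sigma$ with $\mathrm{vcd}(W)-1$ gives $\dim \bndry \Sigma = 1$. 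Third, I would show that $W$ does not split over a virtually cyclic subgroup: splittings of Coxeter groups over finite or $2$-ended subgroups are controlled by separations in the labeled nerve $L^{\bullet}$, and the inseparability hypothesis rules out all such splittings except those arising from a labeled wheel (which correspond to virtually Fuchsian pieces); the non-wheel hypothesis excludes this residual case. Visual splitting theorems for Coxeter groups in the style of Mihalik--Tschantz and \Swiatkowski\ then promote the absence of visual splittings to the absence of any virtually cyclic splitting of $W$.

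With the hypotheses of Theorem~\ref{theorem:MainTheorem} verified, I conclude that $\bndry \Sigma$ is homeomorphic to $S^1$, a Sierpinski carpet, or a Menger curve. I rule out $S^1$ first: if $\bndry \Sigma \cong S^1$, then by Ruane-type results $W$ would be virtually a closed surface group, and such a Coxeter group must have its nerve equal to a triangulation of $S^1$, contradicting the hypothesis. I rule out the Menger curve using the planarity of $L$: by the theorem of Davis--Okun \cite{DO01}, $W$ acts properly on a contractible $3$-manifold $M$, and a standard limit-set argument shows that $\bndry \Sigma$ embeds in $S^2$, so it is planar. Since every nonempty open subset of a Menger curve is non-planar, $\bndry \Sigma$ cannot be a Menger curve. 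Only the Sierpinski carpet option remains, proving the theorem.

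The main obstacle I anticipate is the splitting step. Turning the purely combinatorial conditions ``inseparable'' and ``distinct from a labeled wheel'' into the genuine algebraic statement that $W$ admits no virtually cyclic splitting (and not merely no visual one) requires the full strength of \Swiatkowski's framework in the hyperbolic case, adapted to the isolated flats setting. The additional complication here is that one must also control the interaction with the maximal virtually abelian peripheral subgroups of rank $\geq 2$ so as to distinguish genuine $2$-ended splittings from splittings along peripheral subgroups, but this is exactly what the combination of Theorem~\ref{theorem:CutptTheorem} and the local-cut-point/splitting dictionary from \cite{Hau17a} is designed to handle.
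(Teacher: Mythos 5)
The statement you were asked to address is the Conjecture: every $\CAT(0)$ group $\Gamma$ with isolated flats and Sierpinski carpet boundary acts properly on a contractible $3$-manifold. This is stated in the paper as an open conjecture (an isolated-flats analogue of the Kapovich--Kleiner conjecture); the paper offers no proof of it, and your proposal does not prove it either. What you have sketched is instead a proof of Theorem \ref{theorem: IFSwiatkowski}: starting from a Coxeter system $(W,S)$ with isolated flats, planar nerve, inseparable labeled nerve distinct from a labeled wheel, you verify the hypotheses of Theorem \ref{theorem:MainTheorem}, rule out $S^1$, and rule out the Menger curve via planarity of $\bndry\Sigma$ (which you derive from Davis--Okun plus a limit-set argument, where the paper simply quotes \Swiatkowski's lemmas). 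That argument runs in the opposite logical direction from the Conjecture: there the planar nerve is the hypothesis and the carpet boundary is the conclusion, and the $3$-manifold from Davis--Okun is an input obtained from planarity of $L$, not something deduced from the boundary being a carpet.

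To address the Conjecture you would need to start from an arbitrary $\CAT(0)$ group $\Gamma$ with isolated flats whose boundary is abstractly homeomorphic to the Sierpinski carpet and construct a proper action of $\Gamma$ on a contractible $3$-manifold. Nothing in your proposal produces such a manifold: Davis--Okun applies only to Coxeter groups and takes planarity of the nerve as a hypothesis, and even in the Coxeter case the reverse implication ``carpet boundary $\Rightarrow$ planar nerve'' would have to be established separately before it could be invoked. For general $\Gamma$ no analogue of the nerve is available, and the corresponding statement in the hyperbolic setting is the Kapovich--Kleiner conjecture, which remains open; the paper accordingly presents Theorem \ref{theorem: IFSwiatkowski} only as evidence ``in line with'' the Conjecture, not as a proof of it. So the gap is not a flawed step but a mismatch of statements: your argument, read as a proof of Theorem \ref{theorem: IFSwiatkowski}, is essentially the paper's own approach, but as a proof of the Conjecture it does not engage with what must be shown.
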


\subsection{Methods of Proof}
The strong connection between $\relbndry$ and the $\CAT(0)$ boundary $\bndry X$ is given by Hung Cong Tran \cite{Tran}. For spaces with isolated flats Tran's result implies that $\relbndry$ is the quotient space obtained from $\bndry X$ by identifying points which are in the boundary of the same flat. Using basic decomposition theory (see Section \ref{subsec: Decomposition}), we are able to show that if there exists a local cut point $\xi\in\bndry X$ that is not in the boundary of a flat, then it must push forward under this quotient map to a local cut point of $\relbndry$. This allows us to apply Theorem \ref{theorem:splitting in relative} mentioned above, and prove that the existence of a local cut point that is not in the boundary of a flat implies the existence of a $2$-ended splitting (see Proposition \ref{proposition:Cutptcase1}).

Assuming that our group $\Gamma$ does not split over a two ended group, we are left with the remaining question: Can a point which lies in the boundary of a flat be a local cut point? Much of this paper is spent answering that question in the negative when $\bndry X$ is locally connected. Let $X$ be a $\CAT(0)$ space which has isolated flats with respect to $\F$ and let $F\in\F$ be an $n$-dimensional flat in $X$, then 
it can be shown that $\stab_{\Gamma}(F)$ has a finite index subgroup $H$ isomorphic to $\bbz^n$. In Section \ref{sec:ProperCocompact} we show that $H$ acts properly and cocompactly on $\bndry X\setminus \bndry F$. This is done by means of a relation on $F\times (\bndry X\setminus\bndry F)$, which uses orthogonal rays to associate points in $F$ with points in the boundary. In Sections \ref{sec:addprop} and  \ref{sec:GeometricAction} we assume that $\bndry X$ is locally connected and show that we may put an $H$-equivariant metric on $\bndry X\setminus\bndry F$. Then in Section \ref{sec:inbndryF} we use the properties of this action to deduce that a point in the boundary of a flat cannot be a local cut point. This combined with Proposition \ref{proposition:Cutptcase1} allow us to obtain Theorem \ref{theorem:CutptTheorem}.

Once we have completed the proof of Theorem \ref{theorem:CutptTheorem} we are ready to prove Theorem \ref{theorem:MainTheorem}. This is accomplished in Section \ref{section: Main Theorem} with an argument inspired by Kapovich and Kleiner \cite{KK}. Using the dynamics of the action of $\Gamma$ on the boundary we show that if $\bndry X$ contains a non-planar graph $K$, then every open subset of must contain a homeomorphic copy of $K$. This will be enough to complete the proof.

\medskip

\subsection{Acknowledgments}
First and foremost, I would like to thank my advisor Chris Hruska for his guidance throughout this project. Second, I would like to thank the rest of UWM topology group Craig Guilbault, Ric Ancel, and Boris Okun for suggesting simplifications for some my  arguments. I would also, like to thank Kim Ruane, Jason Manning, and Kevin Schreve for helpful conversations.

\section{Preliminaries}
\label{sec:Preliminaries}

\subsection{The Bordification of a $\CAT(0)$ Space}
\label{subsec:CAT0Boundary}
Throughout this paper we will assume that $X$ is a proper $\CAT(0)$ metric space, unless otherwise stated. We refer the reader to \cite{BH1} for definitions and basic results about $\CAT(0)$ spaces.

 The {\it boundary} of $X$, denoted $\bndry X$, is the set of equivalence classes of geodesic rays. Where two rays $c_1,c_2\colon [0,\infty)\to X$ are equivalent if there exists a constant $D\geq 0$ such that $d\big(c_1(t),c_2(t)\big)\leq D$ for all $t\in[0,\infty)$. The {\it bordification} of $X$ is the set $\overline{X}=X\cup\bndry X$.

The bordification $\overline{X}$ comes equipped with a natural topology called the {\it cone topology}, where one considers rays based at some fixed point. A basis for the cone topology consists of open balls in $X$ together with ``neighborhoods of points at infinity.'' Given a geodesic ray $c$ and positive numbers $t>0$, $\epsilon >0$, define
\begin{gather*}
 V(c,t,\epsilon)=\bigset{\,x\in\overline{X} }{t<d\big(x,c(0)\big)\,\text{and} \, d\big(\pi_t(x),c(t)\big)<\epsilon}
\end{gather*}
Where $\pi_t$ is the orthogonal projection onto the closed ball $\overline{B}\big(c(0), t\big)$. For fixed $c$ and $\epsilon>0 $ the sets $V(c,t,\epsilon)$ form a neighborhood base at infinity about $c$. Intuitively, this means that two points in $\bndry X$ will be close if they are represented by rays which are $\epsilon$ close at for large values of $t$. We will denote by $V_{\bndry}(c,t,\epsilon)$ the set $V(c,t,\epsilon)\cap\bndry X$.

\subsection{The Bowditch Boundary and Splittings}
\label{subsec:The Bowditch Boundary and Splittings}
Let $\Gamma$ be a group and $\mathbb{P}$ a collection of infinite subgroups which is closed under conjugation, called {\it peripheral subgroups}.
\medskip

We say that $\Gamma$ is $hyperbolic$ $relative$ $to$ $\bbp$ if $\Gamma$ admits a proper isometric action on a proper $\delta$-hyperbolic space $Y$ such that:
\begin{enumerate}[(i)]
\item $\bbp$ is the set of all maximal parabolic subgroups
\item There exists a $\Gamma$-invariant system of disjoint open horoballs based at the parabolic points of $\Gamma$, such that if $\mathcal{B}$ is the union of these horoballs, then $\Gamma$ acts cocompactly on $Y\setminus\mathcal{B}$.
\end{enumerate}

The {\it Bowditch boundary} $\relbndry$ of $(\Gamma,\bbp)$ is defined to be the  boundary of the space $Y$. If $(\Gamma,\bbp)$ is relatively hyperbolic and acts geometrically on a $\CAT(0)$ space $X$ there is a close relationship between the $\relbndry$ and the visual boundary $\bndry X$.

\begin{theorem}[Tran]
\label{theorem:TranTheorem}$\relbndry$ is $\Gamma$-equivariantly homeomorphic to the quotient of $\bndry X$ obtained by identifying points which are in the boundary of the same flat.
\end{theorem}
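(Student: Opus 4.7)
The plan is to build an explicit $\Gamma$-equivariant continuous surjection $\phi\colon \bndry X\to\relbndry$ whose fibers are exactly the singletons $\{\xi\}$ for $\xi$ not lying in any $\bndry F$, together with the full sphere $\bndry F$ for each flat $F\in\F$, and then to use compactness of $\bndry X$ and Hausdorffness of $\relbndry$ to upgrade $\phi$ to a homeomorphism on the quotient.

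First I would set up a concrete hyperbolic model for the relatively hyperbolic pair. By the Hruska--Kleiner theorem, $\Gamma$ is hyperbolic relative to the collection $\bbp$ of stabilizers of flats in $\F$, and a proper $\delta$-hyperbolic space $Y$ witnessing this can be obtained by equivariantly coning off (or replacing by combinatorial cusps) a disjoint $\Gamma$-invariant family of open horoball neighborhoods of the flats. By definition $\relbndry=\bndry Y$, and the parabolic fixed point of $\stab(F)$ is a single point $p_F\in\bndry Y$, the ``point at infinity'' of the cusp over $F$.

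Next I would define $\phi$ by following geodesic rays. Given $\xi\in\bndry X$ represented by a ray $c$, there are two cases. If $c$ is a bounded distance from some flat $F$---equivalently, $\xi\in\bndry F$---then the image of $c$ in $Y$ eventually lies in the cusp over $F$ and so converges to $p_F$. Otherwise, the isolated flats hypothesis forces $c$ to spend only a bounded amount of time in each horoball neighborhood, so its image in $Y$ is an unparameterized quasi-geodesic and converges to a well-defined point $\phi(\xi)\in\bndry Y$. Asymptotic rays in $X$ remain a bounded distance apart in $Y$, so $\phi(\xi)$ is independent of the chosen representative; equivariance is immediate. Continuity follows by a standard cone-topology argument: if $\xi_n\to\xi$ in $\bndry X$, representative rays fellow-travel for longer and longer times in $X$, hence in $Y$, so their limits converge in $\bndry Y$.

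It remains to identify the fibers and finish. If $\xi,\eta\in\bndry F$ then $\phi(\xi)=\phi(\eta)=p_F$ by construction. Conversely, if $\phi(\xi)=\phi(\eta)=p_F$, then each ray enters the cusp over $F$ arbitrarily deeply; since distinct flats have bounded coarse intersection, each ray must stay a bounded $X$-distance from $F$, hence $\xi,\eta\in\bndry F$. If instead $\phi(\xi)=\phi(\eta)$ is a conical limit point, the two images in $Y$ are asymptotic geodesics that track the original rays in $X$ up to bounded error outside the horoballs, forcing $\xi=\eta$. Thus $\phi$ is a continuous surjection from a compact space to a Hausdorff space, hence a closed quotient map, and its fiber description exhibits $\relbndry$ as the claimed $\Gamma$-equivariant quotient of $\bndry X$.

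The main obstacle is the fiber analysis: bridging the $X$-geometry and the $Y$-geometry requires quantitative control on how a $\CAT(0)$ geodesic ray in $X$ interacts with horoball neighborhoods of flats, since a ray with endpoint in $\bndry F$ need not lie in $F$, and a geodesic in $Y$ limiting to $p_F$ need not be the image of an $X$-ray that is bounded distance from $F$. This is precisely where the Hruska--Kleiner comparison between the $\CAT(0)$ geometry with isolated flats and the truncated hyperbolic model does the essential work, and a careful version of the same estimate is what underwrites continuity of $\phi$ at points of $\bndry F$.
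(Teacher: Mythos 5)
You should know at the outset that the paper contains no proof of this statement: Theorem \ref{theorem:TranTheorem} is imported from Tran's article and is used as a black box, so there is no in-paper argument to compare yours against. Your outline is the natural strategy (and is, in spirit, how such results are proved): map a ray class $\xi\in\bndry X$ to the limit in $\bndry Y$ of the image of a representative ray in the cusped model $Y$, identify the fibers, and use compact-to-Hausdorff to conclude the induced bijection is a homeomorphism. That last step is fine, and the equivariance and well-definedness on asymptotic classes are fine.

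However, as a proof the sketch has genuine gaps, and they sit exactly where the content of the theorem lives. (i) Well-definedness for $\xi$ not in any $\bndry F$ rests on your claim that the image of the ray is a uniform unparameterized quasi-geodesic in $Y$; Hruska--Kleiner give relative hyperbolicity (hyperbolicity of the cusped space plus bounded coset penetration), but the tracking statement for arbitrary $\CAT(0)$ rays is not something you can wave at --- note in particular that a ray avoiding all flat boundaries may still make longer and longer excursions through successive horoball neighborhoods, so ``bounded time in each horoball'' is not uniform and does not by itself give convergence in $\bndry Y$. (ii) The fiber analysis at parabolic points is not correct as stated: ``distinct flats have bounded coarse intersection'' does not show that a ray whose image converges to $p_F$ stays at bounded $X$-distance from $F$. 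Convergence to $p_F$ is a statement about Gromov products (the $Y$-geodesics to the image points penetrate deep into the cusp over $F$), and a priori this is compatible with $d\bigl(c(t),F\bigr)\to\infty$ while the projection of $c(t)$ to $F$ escapes to infinity; ruling this out needs the quasiconvexity of $F$ together with the tracking estimate, or equivalently the classification of limit points as conical or parabolic. (iii) Continuity is asserted in one line, but at points of $\bndry F$, where the map is highly non-injective, fellow-traveling in $X$ does not obviously translate into convergence to $p_F$ in $\bndry Y$; this is really the upper semicontinuity of the decomposition $\bigset{\bndry F}{F\in\F}$, which in the paper is obtained from the null family property (Propositions \ref{proposition:uppersemi} and \ref{proposition:nullfamily}) rather than from a fellow-traveling argument. (iv) Surjectivity of $\phi$ is claimed but never argued; one must realize every conical limit point of $\bndry Y$ as the image of an $X$-ray, e.g.\ by a compactness argument on orbit points. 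So the architecture is right, but the four items above, especially (ii) and (iii), are precisely the substance of Tran's theorem and would need full arguments before this could stand as a proof.
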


A {\it splitting} of a group $\Gamma$ over a given class of subgroups is a finite graph of groups $\mathcal{G}$ of $\Gamma$, where each edge group belongs to the given class. 
A splitting is called {\it trivial} if there exists a vertex group equal to $\Gamma$. Assume that $\Gamma$ is hyperbolic relative to a collection $\bbp$. A {\it peripheral splitting} of $(\Gamma, \bbp)$ is a finite bipartite graph of groups representation of $\Gamma$, where $\bbp$ is the set of conjugacy classes of vertex groups of one color of the partition called peripheral vertices. Nonperipheral vertex groups will be referred to as {\it components}. This terminology stems from the correspondence between the cut point tree of $\relbndry$ and the peripheral splitting of $(\Gamma, \bbp)$, where elements of $\bbp$ correspond to cut point vertices and the components correspond to components of the boundary (i.e. equivalence classes of points not separated by cut points).

A peripheral splitting $\mathcal{G}$ is a refinement of another peripheral splitting $\mathcal{G}'$ if $\mathcal{G}'$ can be obtained from $\mathcal{G}$ via a finite sequence of foldings that preserve the vertex coloring. In \cite{Bow4} Bowditch proved the following accessibility result:

\begin{theorem}
Suppose that $(\Gamma,\bbp)$ is relatively hyperbolic and that $\relbndry$ is connected. Then $(\Gamma,\bbp)$ admits a (possibly trivial) peripheral splitting which is maximal in the sense that it is not a refinement of any other peripheral splitting.
\end{theorem}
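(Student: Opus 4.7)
The plan is to reduce the problem to analyzing the global cut point structure of $\relbndry$ and to turn that analysis into a splitting via a pretree construction, then to obtain maximality from a relative accessibility argument. First, observe that if $\relbndry$ has no global cut points, then the trivial peripheral splitting with vertex groups $\Gamma$ (non-peripheral) and one vertex for each conjugacy class in $\bbp$ is vacuously maximal; so we may assume global cut points exist.

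The key construction is Bowditch's cut-point pretree. On the connected metric continuum $\relbndry$, the ternary relation ``$x$ separates $y$ from $z$'' on cut points assembles the cut-point set into a betweenness pretree, which in turn determines a simplicial $\bbr$-tree $T$ carrying a $\Gamma$-action by automorphisms. Since $\Gamma$ acts on $\relbndry$ as a geometrically finite convergence group, every global cut point has infinite stabilizer. Loxodromic elements have exactly two fixed points and cannot fix a cut point (a standard dynamical argument using north--south dynamics shows the two fixed points lie in the same component of the complement), so the stabilizer of any cut point is contained in a parabolic subgroup, hence in a peripheral subgroup. This forces $T$ to be bipartite: one color class consists of cut-point orbits, with peripheral stabilizers, and the other color class consists of orbits of ``components'' of $\relbndry$ in the sense described in the paper, with non-peripheral stabilizers. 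The induced graph of groups is the candidate peripheral splitting.

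For maximality, I would order peripheral splittings by refinement and show that any ascending chain stabilizes. A refinement corresponds to an equivariant subdivision of the Bass--Serre tree, which in turn corresponds to detecting further cut pairs inside existing components. The plan is to bound the combinatorial complexity of a peripheral splitting by an invariant that strictly increases under proper refinement; the natural candidate is the number of orbits of edges in the Bass--Serre tree of the splitting. Using that $\Gamma$ is finitely generated relative to $\bbp$, and that peripheral vertex groups remain unchanged under refinement (only components get refined further), the problem reduces to accessibility for the non-peripheral vertex groups, each of which is itself hyperbolic relative to its induced peripheral structure. One then invokes a Dunwoody-style relative accessibility statement, applied to these smaller relatively hyperbolic groups, to conclude that the chain terminates.

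The main obstacle is precisely this accessibility step. In the absolute hyperbolic setting Dunwoody's theorem over finitely presented groups provides a clean uniform bound, but in the relative setting the edge groups of the refinement are only required to be peripheral, not two-ended, and so the standard accessibility machinery does not apply directly. The technical heart of the argument would be to show that the edge stabilizers appearing in any peripheral splitting are controlled (for instance, conjugate into peripheral subgroups and meeting the peripheral subgroups in a controlled way), so that one can set up an accessibility bound that counts only edges carrying ``new'' peripheral data. Once that complexity bound is in place, maximality follows by choosing a splitting of maximal complexity in the refinement order.
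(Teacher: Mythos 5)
You should first note that the paper does not prove this statement at all: it is quoted verbatim from Bowditch's \emph{Peripheral splittings of groups} \cite{Bow4}, so the only meaningful benchmark is Bowditch's argument. Measured against that, your construction of the candidate splitting is pointed in the right direction (cut-point pretree $\to$ simplicial tree $\to$ bipartite graph of groups), but two steps you treat as routine are in fact the substance of the theorem. First, the claim that a global cut point of $\relbndry$ has infinite stabilizer contained in a peripheral subgroup is not a ``standard dynamical argument''; what is actually needed (and what Bowditch proves, with real work) is that every global cut point is a \emph{bounded parabolic} point, so that the cut-point vertices of the tree have exactly the peripheral subgroups as stabilizers. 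A cut point could a priori be a conical limit point with trivial stabilizer; ruling this out is the relative analogue of the Bowditch--Swarup cut-point theorem for hyperbolic boundaries and cannot be waved through. You also need discreteness of the betweenness pretree and cofiniteness of the $\Gamma$-action on the resulting tree to get a \emph{finite} bipartite graph of groups; neither is addressed. Relatedly, your opening reduction is not ``vacuous'': when $\relbndry$ has no global cut point, maximality of the trivial splitting requires the implication that a nontrivial peripheral splitting forces a global cut point in the boundary, which is itself one of Bowditch's theorems.

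Second, and more seriously, the maximality step --- the actual accessibility content of the statement --- is missing, as you yourself concede. A Dunwoody-style complexity count does not work here: the edge groups of a peripheral splitting are (subgroups of) peripheral groups, which need not be finite, two-ended, or even small, and accessibility over such edge groups is simply not available off the shelf (finitely generated groups can even be inaccessible over small subgroups without further hypotheses). Bowditch does not argue this way. His maximality comes from canonicity: he shows that any peripheral splitting of $(\Gamma,\bbp)$ manifests itself in the global cut-point structure of $\relbndry$, so that its Bass--Serre tree is compatible with (maps to, after folding) the cut-point tree; hence the splitting read off from the cut-point tree is the maximal one, with no chain-termination argument needed. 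As written, your proposal establishes (modulo the gaps above) a candidate splitting but not the property the theorem asserts about it.
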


\subsection{Isolated Flats}
\label{subsec:IsolatedFlats}

Here we introduce basic definitions and pertinent results regarding spaces with isolated flats. We refer the reader to \cite{HK1} for a more detailed account. Let $X$ be a $\CAT(0)$ space with $\Gamma$ acting geometrically on $X$. A {\it k-flat} in $X$ is an isometrically embedded copy of Euclidean space, $\mathbb{E}^k$. A $1$-flat will also be referred to as a {\it line} and a $2$-flat may be referred to as a {\it flat plane}.

The space $X$ is said to have {\it isolated flats} if there is a $\Gamma$-invariant collection of  flats, $\mathcal{F}$, of dimension 2 or greater and such that the following hold:
\medskip

\begin{enumerate}[(i)]

\item (capturing condition) There exists a constant $D<\infty$ such that each flat in $X$ lies in the $D$-tubular neighborhood of some $F\in\mathcal{F}$

\item (isolating condition) For every $\rho<\infty$ there exists $\kappa(\rho)<\infty$ such that for any two distinct $F,F'\in \mathcal(F)$ we have $\diam\big(\mathcal{N}_{\rho}(F)\cap\mathcal{N}_{\rho}(F')\big)<\kappa(\rho)$

\end{enumerate}

\medskip
Hruska and Kleiner have shown in \cite{HK1} that if $\Gamma$ is a group acting geometrically on a $\CAT(0)$ space with isolated flats, then $\Gamma$ is hyperbolic relative to a collection of virtually abelian subgroups of rank at least 2 (Theorem 1.2.1 of \cite{HK1}). Hruska and Kleiner have also shown that for isolated flats $\bndry X$ is an invariant of the group $\Gamma$ up to quasi-isometry (Theorem 1.2.2 of \cite{HK1}). The following recent result concerning groups with isolated flats is due to Hruska and Ruane \cite{HR1}, and is particularly relevant to this project:

\begin{theorem}[Hruska-Ruane]
\label{theorem: HR}
Let $\Gamma$ be a one-ended group acting geometrically on a $\CAT(0)$ space with isolated flats.
Let $\mathcal{G}$ be the maximal peripheral splitting of $\,\Gamma$.
Then each vertex group of $\mathcal{G}$ acts geometrically
on a $\CAT(0)$ space with locally connected boundary.

Furthermore $\bndry X$ is locally connected if and only if the following
condition holds:
Each edge group of $\mathcal{G}$ has finite index in the adjacent
peripheral vertex group.

\end{theorem}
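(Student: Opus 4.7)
The plan is to work simultaneously with the $\CAT(0)$ boundary $\bndry X$ and the Bowditch boundary $\relbndry$, using Tran's theorem (Theorem~\ref{theorem:TranTheorem}) as the bridge. The first reduction is to invoke Bowditch's accessibility result on $(\Gamma,\bbp)$: since $\Gamma$ is one-ended and has isolated flats, $\relbndry$ is connected, so a maximal peripheral splitting $\mathcal{G}$ exists. Peripheral vertex groups are (virtually abelian) stabilizers of flats $F\in\F$, and they act geometrically on the flats themselves, whose boundaries are spheres; hence the ``locally connected boundary'' statement is trivial at peripheral vertices. The content is therefore at the non-peripheral (``component'') vertex groups. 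By the correspondence recalled in Section~\ref{subsec:The Bowditch Boundary and Splittings}, each component vertex group $\Gamma_v$ stabilizes (up to conjugation) a component of $\relbndry$ in the sense of equivalence classes of non-separated points, and the Bowditch boundary of $\Gamma_v$ relative to its induced peripheral structure is that component. I would first show by a cut-point argument that such components are locally connected: any local non-connectedness would produce, after accumulating on a cut point, a contradiction with the maximality of $\mathcal{G}$ (were the component's Bowditch boundary not locally connected, Bowditch-style refinement would furnish a further peripheral splitting).

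To upgrade ``locally connected Bowditch boundary of $\Gamma_v$'' to ``locally connected $\CAT(0)$ boundary of some $\CAT(0)$ space on which $\Gamma_v$ acts geometrically,'' I would construct a $\Gamma_v$-cocompact, $\Gamma_v$-invariant convex subcomplex $X_v\subset X$ (obtained, for example, by taking the minimal convex set containing a $\Gamma_v$-orbit or by removing horoballs associated to non-adjacent flats). Then $\bndry X_v$ is obtained from $\bndry X$ by keeping only the $\Gamma_v$-invariant limit set, which by Tran's theorem projects onto the corresponding component of $\relbndry$ with preimage of each peripheral point equal to a single flat boundary $\bndry F$. Local connectedness of $\bndry X_v$ follows from local connectedness of the component by the same decomposition-theoretic argument sketched in the paper's ``Methods of Proof'' subsection: blowing up finitely many cut points in a locally connected space to spheres gives a locally connected space.

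For the ``if and only if'' characterization, the key is to analyze local connectedness at points $\xi\in\bndry F$ for a flat $F$ stabilized by a peripheral vertex group $P$ whose incident edge group in $\mathcal{G}$ is $E$. The adjacent component vertex group $\Gamma_v$ contains $E$ as a subgroup that fixes $\bndry F$ setwise and acts cocompactly on $\bndry X_v\cap\bndry F$ (essentially a peripheral sphere in the boundary of $X_v$). Coset representatives of $E$ in $P$ translate the ``local picture'' at $\xi$ around the sphere $\bndry F$. In the ``if'' direction, finite index $[P:E]<\infty$ means only finitely many $E$-translates of the local picture are needed to sweep out $\bndry F$; combined with local connectedness of $\bndry X_v$, a standard Bass--Serre style union-of-locally-connected-neighborhoods argument produces a connected neighborhood of $\xi$ in $\bndry X$. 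In the ``only if'' direction, $[P:E]=\infty$ means one obtains infinitely many pairwise non-equivalent component pieces attaching to $\bndry F$ at $\xi$; by a pigeonhole plus convergence argument in the cone topology, small neighborhoods $V_{\bndry}(c,t,\epsilon)$ of $\xi$ cannot be contained in any connected neighborhood, contradicting local connectedness.

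The main obstacle is the ``only if'' direction, specifically producing the honest topological failure of local connectedness from the algebraic infinite-index hypothesis. One must convert an abstract coset-translation picture into a genuine accumulation of distinct path-components within arbitrarily small cone neighborhoods of $\xi$. This requires careful use of the isolated flats hypothesis to guarantee that translates of a fixed component piece by coset representatives of $E$ in $P$ actually separate from one another in $\bndry X$ (i.e., are not accidentally joined by ``stray'' geodesic rays), and it is here that the capturing and isolating conditions of Section~\ref{subsec:IsolatedFlats} enter critically. I would expect the proof to rely on a compactness argument combined with Tran's theorem to rule out such stray identifications, together with an explicit analysis of the projection $\pi_t$ onto $\overline{B}(c(0),t)$ to certify that chosen translates remain in distinct components of a cone neighborhood.
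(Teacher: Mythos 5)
You should first note that the paper does not prove this statement at all: Theorem \ref{theorem: HR} is quoted verbatim as a result of Hruska and Ruane, cited from \cite{HR1}, and is used as a black box (via Corollary \ref{corollary: HRcorollary}). So there is no in-paper proof to compare with; your proposal has to stand on its own, and as it stands it has several genuine gaps rather than being a proof of this (substantial) external theorem.

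The most serious gaps are these. First, the claim that a failure of local connectedness of a component's Bowditch boundary would ``contradict the maximality of $\mathcal{G}$'' is not an argument: maximality rules out further peripheral refinements (i.e.\ further global cut point structure), but local connectedness of the pieces is a deep theorem in its own right (Bowditch's work on boundaries of relatively hyperbolic groups), not something produced by a folding/refinement mechanism. Second, your decomposition-theoretic transfer runs in the wrong direction. Tran's theorem (Theorem \ref{theorem:TranTheorem}) presents $\relbndry$ as a monotone upper semicontinuous quotient of $\bndry X$; local connectedness passes easily to such a quotient, but recovering local connectedness of $\bndry X$ (or of $\bndry X_v$) from that of the quotient is exactly what fails in general --- indeed the ``furthermore'' clause of the theorem exhibits $\bndry X$ failing to be locally connected precisely when some edge group has infinite index, even though the Bowditch-boundary side is unproblematic. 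Moreover the nondegenerate decomposition elements $\bigset{\bndry F}{F\in\F}$ form an infinite null family, not ``finitely many cut points blown up to spheres.'' Third, the existence of a convex, $\Gamma_v$-cocompact subspace $X_v\subset X$ for each component vertex group is asserted without proof and is itself a nontrivial step. Finally, the ``only if'' direction of the characterization is explicitly left as a plan; this is where the real content lies, and (as the paper itself remarks) the published proof in \cite{HR1} relies on machinery of the kind developed in Sections \ref{sec:ProperCocompact}--\ref{sec:GeometricAction} here (the proper cocompact action of the flat stabilizer on $\bndry X\setminus\bndry F$), none of which appears in your sketch.
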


In the case where $\bndry X$ is $1$-dimensional we have the following corollary:

\begin{corollary}
\label{corollary: HRcorollary}
Assume $\Gamma$ is acting geometrically on a $\CAT(0)$ space $X$ with isolated flats, and assume $\bndry X$ is $1$-dimensional. Then $\bndry X$ is locally connected if and only if $\Gamma$ does not have a peripheral splitting over a $2$-ended subgroup.
\end{corollary}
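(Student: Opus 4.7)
The plan is to deduce the corollary from Theorem~\ref{theorem: HR} via a dimension reduction on the peripheral subgroups. The first step is a dimension argument: since $X$ has isolated flats, the boundary $\bndry F$ of any $k$-flat $F \in \F$ embeds in $\bndry X$ as a round $(k-1)$-sphere, so the hypothesis $\dim \bndry X = 1$ forces every flat in $\F$ to be 2-dimensional; by \cite{HK1} every peripheral subgroup of the induced relatively hyperbolic structure is then virtually $\bbz^2$. In any peripheral splitting of $\Gamma$, the bipartite coloring places each edge group inside an adjacent peripheral vertex group, i.e.\ inside a virtually $\bbz^2$ group, and (since $\Gamma$ is one-ended in the setting of Theorem~\ref{theorem: HR}) Stallings's theorem precludes finite edge groups. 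So every edge group in every peripheral splitting falls into a clean dichotomy: either 2-ended or of finite index in its peripheral vertex.

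For the $(\Leftarrow)$ direction I would apply the dichotomy directly to the maximal peripheral splitting $\mathcal{G}_{\max}$ furnished by Bowditch's accessibility theorem. The hypothesis that no peripheral splitting is over a 2-ended subgroup eliminates the 2-ended alternative, leaving finite-index edge groups throughout $\mathcal{G}_{\max}$, whereupon Theorem~\ref{theorem: HR} delivers local connectedness of $\bndry X$.

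For the $(\Rightarrow)$ direction I would argue the contrapositive. Suppose $\Gamma$ admits some peripheral splitting $\mathcal{G}_0$ with a 2-ended edge group $E$. Using Bowditch's accessibility I would compare $\mathcal{G}_0$ with $\mathcal{G}_{\max}$: any coloring-preserving folding sequence connecting them can only enlarge edge groups (foldings produce new edge groups by amalgamation), so the existence of a 2-ended edge somewhere in $\mathcal{G}_0$ forces the presence of an edge of $\mathcal{G}_{\max}$ whose edge group sits inside $E$ and is thus itself 2-ended. The dichotomy from the first paragraph then identifies that edge of $\mathcal{G}_{\max}$ as having infinite index in its peripheral, so Theorem~\ref{theorem: HR} forces $\bndry X$ to fail local connectedness.

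The main obstacle will be the careful Bass--Serre bookkeeping in that last step, specifically the claim that a 2-ended edge of $\mathcal{G}_0$ descends to a 2-ended edge of $\mathcal{G}_{\max}$ under a coloring-preserving refinement. This relies on the precise behavior of foldings in Bowditch's peripheral JSJ framework and is the key technical ingredient; once it is in hand, the dimension reduction and Theorem~\ref{theorem: HR} combine to close the corollary immediately.
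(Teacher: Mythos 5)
Your proposal is correct and is essentially the deduction the paper intends (it states the corollary without proof as immediate from Theorem~\ref{theorem: HR}): one-dimensionality of $\bndry X$ forces the flats, hence the peripheral subgroups, to be $2$-dimensional, i.e.\ virtually $\mathbb{Z}^2$, so an edge group of a peripheral splitting either has finite index in its adjacent peripheral vertex group or is virtually cyclic, and comparing with the maximal peripheral splitting via Theorem~\ref{theorem: HR} closes both directions. The only small correction: the edge group of $\mathcal{G}_{\max}$ that sits inside $E$ need only be virtually cyclic (it could be finite rather than $2$-ended), but it still has infinite index in the virtually $\mathbb{Z}^2$ peripheral vertex group, which is all that Theorem~\ref{theorem: HR} requires.
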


\begin{remark} As ``no splitting over a two-ended subgroup'' is a hypothesis in both Theorem \ref{theorem:MainTheorem} and Theorem \ref{theorem:CutptTheorem}, we may assume that $\bndry X$ is locally connected when required. Also, notice that for the proof of Theorem \ref{theorem:MainTheorem} we are concerned with 1-dimensional boundaries, so in that case the dimension of the flats we are interested in is 2. However, for many of the result we will not need to make any assumption about the dimension of flats.
\end{remark}

\subsection{Local Cut Points}
\label{subsec:Cut Points and Cut Pairs In Metric Spaces}

Recall that a {\it continuum} is a non-empty, connected, compact, metric space, and let $M$ be such a space. A {\it cut point} of $M$ is a point $x\in M$ such that $M\setminus \{x\}$ is disconnected. A point $x\in M$ is a {\it local cut point} if $x$ is a cut point or $M\setminus\{x\}$ has more than one end. A detailed discussion of ends of spaces can be found in Section 3 of \cite{Gui13}. In this paper we are often interested in whether a given point is a local cut point or not. Thus we remark that saying a point $x\in M$ is a local cut point is equivalent to saying that there exists a neighborhood $U$ of $x$ such that for every neighborhood $V$ of $x$ with $V\subset U$, there exist points $z,y\in V\setminus\{x\}$ which cannot be connected inside $U\setminus\{x\}$, i.e. $z$ and $y$ are not contained in the same connected subset of $U\setminus\{x\}$. In Section \ref{sec:inbndryF} we will be interested in showing that a point cannot be a local cut point, so it is worth noting the negation of the above. In other words, to check that $x$ is not a local cut point it suffices to show that given a neighborhood $U$ of $x$ there exists a neighborhood $V\ni x$ with $V\subset U$ and $V\setminus \{x\}$ connected.

In his study of JSJ splittings of hyperbolic groups Bowditch investigated the local cut point structure of the boundary. In that setting Bowditch shows that the existence of a local cut point implies that group splits over a 2-ended subgroup. In \cite{Hau17a} the author studies local cut points in the relative boundary (or Bowditch Boundary) $\relbndry$ and has generalized Bowditch's result to show:

 \begin{theorem}
\label{theorem:splitting in relative}
Let $(\Gamma,\bbp)$ be a relatively hyperbolic group and suppose each $P\in\bbp$
is finitely presented, one- or two-ended, and contains no infinite torsion
subgroup. Assume that $\relbndry$ is connected and not homeomorphic to a circle. If $\relbndry$ contains a non-parabolic local cut point, then $G$ splits over a $2$-ended subgroup.
\end{theorem}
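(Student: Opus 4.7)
The plan is to exploit Bowditch's accessibility theorem for peripheral splittings (Theorem 2.5 above) in order to reduce to the case without global cut points, which is the case handled by the Groves--Manning splitting theorem discussed in the introduction. First I would apply accessibility to obtain a maximal peripheral splitting $\mathcal{G}$ of $(\Gamma,\bbp)$. Each nonperipheral vertex group $\Gamma_C$ of $\mathcal{G}$ carries an induced relatively hyperbolic structure whose Bowditch boundary is the component $C \subset \relbndry$ stabilized by $\Gamma_C$; the peripheral family $\bbp_C$ of $\Gamma_C$ consists of the adjacent edge groups of $\mathcal{G}$ together with the non-trivial intersections of $\Gamma_C$ with conjugates of members of $\bbp$. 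Crucially, maximality of $\mathcal{G}$ is equivalent to the statement that each such $C$ has no global cut points.

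Next I would locate the non-parabolic local cut point $\xi$ in a unique component $C$ and show that $\xi$ remains a non-parabolic local cut point of $C$ itself. Since $\xi$ is non-parabolic, it is not a global cut point and so lies in exactly one component. That the local-cut-point property descends to $C$ uses the fact that distinct components of $\relbndry$ meet only at parabolic global cut points and that these global cut points form a set missing $\xi$; hence on a sufficiently small neighborhood basis of $\xi$ the topologies of $\relbndry$ and of $C$ agree, so separating $\xi$ inside $\relbndry$ forces separation inside $C$. Non-parabolicity in $\Gamma_C$ is automatic because every parabolic subgroup of $\Gamma_C$ is conjugate either into an adjacent edge group of $\mathcal{G}$ (which is two-ended) or into a member of $\bbp$. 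The peripherals of $\Gamma_C$ therefore inherit the hypotheses on $\bbp$: finitely presented, one- or two-ended, with no infinite torsion. Applying the Groves--Manning theorem to $(\Gamma_C,\bbp_C)$ yields a splitting of $\Gamma_C$ relative to $\bbp_C$ over a non-parabolic two-ended subgroup $E$. Because $E$ is non-parabolic in $\Gamma_C$, it is not conjugate into any adjacent peripheral vertex group of $\mathcal{G}$, so the relative splitting can be used to refine the vertex of $\mathcal{G}$ corresponding to $\Gamma_C$, yielding the desired splitting of $\Gamma$ over the two-ended subgroup $E$.

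The main obstacle is the edge case where the component $C$ is itself homeomorphic to a circle, since then $\xi$ cannot be a local cut point of $C$ (circles have none), and the Groves--Manning hypothesis fails. In that situation the local-cut-point behavior of $\xi$ in $\relbndry$ must be produced entirely by other components accumulating at $\xi$ through a sequence of global cut points. A careful analysis of this accumulation, combined with the hypothesis that $\relbndry$ is not globally a circle, should produce a splitting directly from the structure of $\mathcal{G}$ over a two-ended subgroup adjacent to $C$ (for instance, over an edge stabilizer in $\mathcal{G}$ incident to the circle vertex), or else contradict the maximality of the peripheral splitting. A secondary technical point, to be discharged via standard combination and subgroup results for relatively hyperbolic groups, is the verification that the induced structure $(\Gamma_C,\bbp_C)$ genuinely satisfies the peripheral hypotheses required to invoke Groves--Manning.
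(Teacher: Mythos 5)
You are proposing a proof of a statement that this paper does not actually prove: Theorem \ref{theorem:splitting in relative} is imported from \cite{Hau17a}, where the argument extends Bowditch's analysis of local cut points and their valence directly to the relatively hyperbolic setting rather than reducing to the Groves--Manning statement. That distinction matters, because the central step of your reduction invokes Groves--Manning outside its hypotheses. As quoted in this paper's introduction, their theorem requires that $\relbndry$ have no global cut points \emph{and that all peripheral subgroups be one-ended}. The theorem you are trying to prove explicitly allows two-ended members of $\bbp$, and passing to a component vertex group $\Gamma_C$ of the maximal peripheral splitting makes matters worse, not better: the induced peripheral structure $\bbp_C$ contains the adjacent edge groups, which are merely subgroups of peripheral groups and so can perfectly well be two-ended (for instance $\bbz \le \bbz^2$), in addition to any two-ended members of $\bbp$ that survive in $\Gamma_C$. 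Your parenthetical assertion that adjacent edge groups are two-ended is likewise unjustified. Handling non-parabolic local cut points in the presence of two-ended parabolics is precisely the content of the cited theorem, so the proposal essentially assumes away the hard case; the reduction does not land in territory covered by Groves--Manning.

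There are two further gaps. First, your claim that the topologies of $\relbndry$ and of the component $C$ agree on a small neighborhood basis at $\xi$ is not justified: the parabolic global cut points at which other components attach can accumulate at $\xi$, so no neighborhood of $\xi$ need miss the other components. One can still hope to show that a local cut point of $\relbndry$ lying in the block $C$ is a local cut point of $C$ (components hanging off cut points attach at single points and so neither create nor destroy separation), but that requires an argument, not an appeal to coincidence of topologies. Second, the ``circle component'' case is both unresolved and founded on a false premise: by the definition used in this paper, every point of $S^1$ \emph{is} a local cut point ($S^1$ minus a point is an arc, which has two ends), so the obstacle is not that $\xi$ fails to be a local cut point of $C$, but that no splitting of $\Gamma_C$ over a two-ended subgroup is available from within a circle block, and the splitting of $\Gamma$ must then be extracted from the peripheral splitting itself, whose edge groups need not be two-ended; ``should produce a splitting \dots or else contradict maximality'' is not a proof. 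Finally, the asserted equivalence between maximality of $\mathcal{G}$ and the absence of global cut points in the components is itself a nontrivial theorem of Bowditch that depends on the tameness hypotheses on $\bbp$ and should be justified rather than taken as definitional.
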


The majority of this paper is concerned with determining the existence or non-existence of local cut points $\bndry X$. Theorem \ref{theorem:splitting in relative} will be used in Section \ref{sec:notinbndryF} to show that the existence of a local cut point which is not in the boundary of a flat implies the existence of a splitting over 2-ended subgroup.

\subsection{Limit Sets}
\label{subsection:Limit Sets}

We will need a few basic results about limit sets sporadically through this paper, consequently, we conclude the preliminary section with a terse discussion of limit sets. In this section $X$ will be a $\CAT(0)$ space and $\Gamma$ some group of isometries of $X$.

Recall, that a for a sequence $(\gamma_n)\subset G$ we write $\gamma_n\rightarrow \xi\in\bndry X$ if $\gamma_n x\rightarrow\xi$ for some $x\in X$. It is clear that if $\gamma_n x\rightarrow\xi$ for some $x$, then $\gamma_n x'\rightarrow\xi$ for any $x'\in X$. The {\it limit set}, $\Lambda(\Gamma)$, of $\Gamma$ is the subset of $\bndry X$ consisting of all such limits. The set $\Lambda(\Gamma)$ is a closed and $\Gamma$-invariant. Given that the action of $\Gamma$ is geometric we have the following:

\begin{lemma}
$\Lambda(\Gamma)=\bndry X$
\end{lemma}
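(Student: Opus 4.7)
The plan is to establish the nontrivial inclusion $\partial X \subseteq \Lambda(\Gamma)$; the reverse inclusion is immediate from the definition of $\Lambda(\Gamma)$ as a subset of $\partial X$. The key input is cocompactness of the action: since $\Gamma$ acts cocompactly on the proper $\CAT(0)$ space $X$, there is a compact set $K \subset X$ and a basepoint $x_0 \in X$ (which I will take to lie in $K$) such that $\Gamma \cdot K = X$.

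First, I would fix an arbitrary $\xi \in \partial X$ and let $c\colon [0,\infty)\to X$ be the unique geodesic ray from $x_0$ with $c(\infty) = \xi$. For each positive integer $n$, cocompactness provides $\gamma_n \in \Gamma$ and $k_n \in K$ with $\gamma_n k_n = c(n)$. Since $K$ has finite diameter, say $\diam(K) \le D$, and $x_0 \in K$, we get
\[
d\bigl(\gamma_n x_0,\, c(n)\bigr) \;=\; d\bigl(\gamma_n x_0,\, \gamma_n k_n\bigr) \;=\; d(x_0, k_n) \;\le\; D.
\]
Thus the orbit points $\gamma_n x_0$ stay within a bounded distance of the ray $c$, and in particular $d(\gamma_n x_0, x_0) \to \infty$.

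Next I would verify that $\gamma_n x_0 \to \xi$ in the cone topology. Using the neighborhood basis $V(c,t,\epsilon)$ from the preliminaries, it suffices to show that for each $t, \epsilon > 0$, eventually $\gamma_n x_0 \in V(c,t,\epsilon)$. The first condition $t < d(\gamma_n x_0, x_0)$ holds for large $n$ by the displacement estimate above. For the projection condition, since $\pi_t$ is nonexpanding (as $\overline{B}(x_0,t)$ is a convex subset of a $\CAT(0)$ space) and $\pi_t(c(n)) = c(t)$ for all $n \geq t$, we obtain
\[
d\bigl(\pi_t(\gamma_n x_0),\, c(t)\bigr) \;=\; d\bigl(\pi_t(\gamma_n x_0),\, \pi_t(c(n))\bigr) \;\le\; d\bigl(\gamma_n x_0, c(n)\bigr) \;\le\; D.
\]
This bound $D$ is not yet smaller than $\epsilon$, so the main (and essentially only) technical point is to sharpen it. I would do this by replacing the crude bound with the standard $\CAT(0)$ thin triangle estimate: because geodesics from $x_0$ to $\gamma_n x_0$ and to $c(n)$ have endpoints at distance $\le D$, convexity of the metric forces $d(\pi_t(\gamma_n x_0), c(t)) \le D \cdot t / d(x_0, \gamma_n x_0)$ for $n$ large, which tends to $0$. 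Hence the projection condition is also met eventually, $\gamma_n x_0 \to \xi$, and therefore $\xi \in \Lambda(\Gamma)$. Since $\xi$ was arbitrary, $\partial X \subseteq \Lambda(\Gamma)$, completing the proof.
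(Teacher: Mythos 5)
Your proof is correct. The paper does not actually supply an argument for this lemma (it is explicitly left as an exercise), and what you give is the standard proof it has in mind: cocompactness puts orbit points within $D=\diam(K)$ of $c(n)$, and the CAT(0) comparison/convexity estimate shows the projections $\pi_t(\gamma_n x_0)$ converge to $c(t)$, so $\gamma_n x_0\to\xi$ in the cone topology. One tiny quantitative remark: since $d(x_0,\gamma_n x_0)$ and $n=d(x_0,c(n))$ need not be equal (they differ by at most $D$), the geodesics $[x_0,\gamma_n x_0]$ and $[x_0,c(n)]$ must be compared after affine reparametrization, so your bound $D\,t/d(x_0,\gamma_n x_0)$ should be replaced by something like $2D\,t/\bigl(d(x_0,\gamma_n x_0)-D\bigr)$; this still tends to $0$, so the argument goes through unchanged.
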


We leave the proof of this result as an exercise.

A subset $M$ of $\Lambda (\Gamma)$ is said to be {\it minimal} if $M$ is closed, non-empty, $\Gamma$-invariant, and does not properly contain a closed $\Gamma$-invariant subset. A useful fact about minimal sets is that $M\subset\Lambda(\Gamma)$ is minimal if and only if $\orb_{\Gamma}(m)$ is dense in $M$ for every $m\in M$. 
The action of $\Gamma$ on $\Lambda(\Gamma)$ is called {\it minimal} if $\Lambda(\Gamma)$ is minimal.

\section{A Proper and Cocompact Action on $\bndry X\backslash \bndry F$}
\label{sec:ProperCocompact}

Let $X$ be a $\CAT(0)$ space with isolated flats and let $F\in\F$ be a flat in $X$. Set $Y=\bndry X\setminus\bndry F$.  In this section we a follow a strategy similar to that of Bowditch in Lemma 6.3 of \cite{Bow1} to show that $\stab_{\Gamma}(F)$ acts properly and cocompactly on $Y$. The key observation made by Bowditch is as follows:

\begin{lemma}
\label{lemma:diagonalaction}
 Let $G$ be a group acting on topological spaces $A$ and $B$. Define the action of $G$ on $A\times B$ to be the diagonal action and let $\R\subset A\times B$. If $\R$ is $G$-invariant and the projections $pr_A$ and $pr_B$ from $\R$ onto the factors are both proper and surjective, then the following are equivalent:
  \begin{enumerate}[(a)]
  \item $G$ acts properly and cocompactly on $A$
  \item $G$ acts properly and cocompactly on $\R$
  \item $G$ acts properly and cocompactly on $B$
  \end{enumerate}

\end{lemma}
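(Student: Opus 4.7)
The plan is to distill the triple equivalence into a single general principle and then apply it twice. The principle: whenever $f\colon X \to Y$ is a $G$-equivariant, proper, surjective map between $G$-spaces, the $G$-action is proper and cocompact on $X$ if and only if it is proper and cocompact on $Y$. Granted this, since both $pr_A\colon \R \to A$ and $pr_B\colon \R \to B$ are $G$-equivariant (for the diagonal action on $A\times B$) and are proper and surjective by hypothesis, applying the principle to $pr_A$ gives (a)$\Leftrightarrow$(b) and applying it to $pr_B$ gives (b)$\Leftrightarrow$(c).

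To prove the principle I would handle cocompactness and properness of the action separately. For cocompactness: given a compact $K\subset X$ with $GK=X$, the set $f(K)\subset Y$ is compact and $G\cdot f(K)=f(GK)=Y$; conversely, given a compact $C\subset Y$ with $GC=Y$, properness of $f$ makes $f^{-1}(C)\subset X$ compact, and a brief equivariance computation gives $G\cdot f^{-1}(C)=f^{-1}(GC)=X$. For properness of the action, I would use the set-theoretic inclusion $f(gK\cap K)\subset gf(K)\cap f(K)$ when $K\subset X$ is compact, and a symmetric lift through $f$ when $K\subset Y$ is compact: any point of $gK\cap K$ pulls back via $f$ to a point of $gf^{-1}(K)\cap f^{-1}(K)$, using surjectivity and equivariance of $f$. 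Either inclusion shows that the set $\{\,g\in G : gK\cap K\neq\emptyset\,\}$ is contained in the analogous set for a compact subset of the other space, and is therefore finite whenever the latter is.

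I do not expect a significant obstacle; the argument is essentially bookkeeping once one observes that properness of $f$ transports compact sets across the map while equivariance preserves the group action. The only mild care needed is in the direction (b)$\Rightarrow$(a) for properness of the action, where one must lift the intersection condition $gK\cap K\neq\emptyset$ from $Y$ back to $X$ by choosing preimages, using surjectivity and equivariance of $f$ in tandem.
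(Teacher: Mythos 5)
Your proof is correct: the reduction to a single transport principle (an equivariant, proper, surjective map carries properness and cocompactness of the action back and forth), applied once to $pr_A$ and once to $pr_B$, works exactly as you sketch, and all the set-theoretic steps ($f(gK\cap K)\subset gf(K)\cap f(K)$, the lift of $gK\cap K\neq\emptyset$ through $f^{-1}$ via surjectivity and equivariance, and $G\cdot f^{-1}(C)=f^{-1}(GC)$) check out, with ``proper action'' read as proper discontinuity, which is the intended meaning here since $G$ is discrete. The paper gives no proof of this lemma at all — it is quoted as Bowditch's observation — and your argument is precisely the standard one underlying it, so there is no divergence to report.
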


\medskip
 Set $G=\stab_{\Gamma}(F)$. Define $\perp(F)$ be the set of all geodesic rays orthogonal to $F$. Recall that a geodsic ray $r\colon [0,\infty)\to X$ is orthogonal to a convex set $C\subset X$ if for every $t>0$ and for any $y\in C$ the Alexandrov angle, $\angle_{r(0)}\big(r(t),y\big)$, is greater than or equal to $\pi/2$. It is well known that $G$ acts cocompactly on $F$ (see \cite{HK1} Lemma 3.1.2). Let $A>0$ be the diameter of the fundamental domain of this action. We define $\R=\big\{ (x,q)\big| \spc q\in\perp(F) \spc \text{with}\spc d\big(x,q(0)\big)\leq A\big\}$. Unless otherwise stated, we will assume that our base point $x_0$ is in the flat $F$.

We want that $\R$ satisfies the hypotheses of Lemma \ref{lemma:diagonalaction}, with the roles of $A$ and $B$ played by $F$ and $Y=\bndry X\setminus F$. We begin with the following observation:

\medskip
\medskip
\begin{lemma}
$\R$ is $G$-invariant.
\end{lemma}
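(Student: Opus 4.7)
The plan is to verify the three defining conditions of $\mathcal{R}$ are preserved under the diagonal action. Fix $(x,q)\in\mathcal{R}$ and $g\in G=\stab_\Gamma(F)$; the goal is to show that $(gx,gq)\in\mathcal{R}$, where as usual $gq$ denotes the ray $t\mapsto g\cdot q(t)$.

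First I would handle the base-point coordinate: since $g$ stabilizes $F$ setwise (and $g$ is an isometry, so in particular a homeomorphism of $X$), we have $gx\in F$ whenever $x\in F$, and likewise $gq(0)\in F$ because $q(0)\in F$. Next, because $g$ is an isometry of $X$, the distance relation transfers immediately:
\[
d\bigl(gx,\,gq(0)\bigr)\;=\;d\bigl(x,q(0)\bigr)\;\leq\;A,
\]
which is one of the two conditions needed. Finally I would verify that $gq\in\perp(F)$: given any $y'\in F$, write $y'=gy$ for some $y\in F$ (using that $g$ stabilizes $F$), and use the isometry property of $g$ on Alexandrov angles to obtain
\[
\angle_{gq(0)}\bigl(gq(t),y'\bigr)\;=\;\angle_{q(0)}\bigl(q(t),y\bigr)\;\geq\;\pi/2
\]
for every $t>0$, where the inequality holds because $q\in\perp(F)$. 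Since $y'\in F$ was arbitrary and the inequality holds for all $t>0$, we conclude $gq\in\perp(F)$.

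Combining these three observations yields $(gx,gq)\in\mathcal{R}$, proving $G$-invariance. The argument is essentially bookkeeping; there is no real obstacle here, since each condition defining $\mathcal{R}$ is expressed in terms of isometry-invariant data (membership in $F$, distances, and Alexandrov angles with points of $F$), and $G$ acts by isometries that preserve $F$ setwise. The only point worth being careful about is confirming that orthogonality to $F$ is preserved, which reduces to the fact that $g$ permutes the points of $F$ while preserving angles.
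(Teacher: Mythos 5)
Your argument is correct and is essentially the paper's: both proofs observe that the distance condition is immediate because $g$ is an isometry, and that orthogonality to $F$ is preserved because $g$ is an isometry carrying $F$ onto itself. The only (cosmetic) difference is that you check orthogonality directly from the Alexandrov-angle definition, while the paper instead notes that uniqueness of the nearest-point projection onto the convex set $F$ forces $\pi_F(g\,q(t))=g\,\pi_F(q(t))$; both verifications are valid.
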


\begin{proof}
$G\leq \Gamma$ acts on $X$ by isometries, so if $(x,q)\in\R$ and $h\in G$ then $d\big(h.x,h.q(0)\big)<A$. If $\pi_F$ is the orthogonal projection onto $F$, then by $d\big(h.q(t),h.q(0)\big)=d\big(q(t),q(0)\big)$ and the uniqueness of the projection point $\pi_F$ (see \cite{BH1} Proposition II.2.4) we must have that $\pi_F\big(h.q(t)\big)=h.\pi_F\big(q(t)\big)$ for every $t\in [0,\infty)$.
\end{proof}

\medskip
 To continue our study of $\R$, we require the following useful lemma, which allows one to construct a new orthogonal ray from a sequence of orthogonal rays with convergent base points. The proof relies on a standard diagonal argument and will not be presented here; however, it is not dissimilar to the proof presented in Lemma 5.31 of \cite{BH1}.

\begin{lemma}
\label{lemma:diagonallemma}
If $(Y,\rho)$ is a separable metric space, $(X,d)$ is proper metric space, $y_0\in Y$, and $K$ a compact subset of $X$, then any sequence of isometric embeddings, $c_n\colon Y\to X$, with $c_n(y_0)\in K$ has a subsequence which converges point-wise to an isometric embedding $c\colon Y\to X$.
\end{lemma}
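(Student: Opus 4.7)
The plan is to run a standard Arzelà--Ascoli style diagonal extraction using the separability of $Y$ and the properness of $X$. First I would fix a countable dense set $\{y_k\}_{k\geq 1}\subset Y$ containing the basepoint $y_0$. Since each $c_n$ is an isometric embedding, for every fixed $k$ one has
\[
d\bigl(c_n(y_k),\, c_n(y_0)\bigr) \;=\; \rho(y_k,y_0),
\]
and because $c_n(y_0)\in K$ which is compact (hence bounded), the points $c_n(y_k)$ lie in a fixed closed ball in $X$. Properness of $X$ makes this ball compact, so $\bigl(c_n(y_k)\bigr)_n$ has convergent subsequences for each $k$.

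Next I would do the diagonal extraction: pass to a subsequence along which $c_n(y_0)$ converges to some $c(y_0)\in K$; then refine so that $c_n(y_1)$ converges; then refine again for $y_2$; and so on, finally taking the diagonal subsequence, which I will again denote by $(c_n)$. By construction $c_n(y_k)\to c(y_k)$ for each $k$, and passing to the limit in the isometry relation $d(c_n(y_k),c_n(y_j))=\rho(y_k,y_j)$ shows that the map $c\colon\{y_k\}\to X$ is an isometric embedding on the dense set.

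Since $c$ is $1$-Lipschitz on a dense subset of the metric space $Y$, it extends uniquely to a $1$-Lipschitz map $\bar c\colon Y\to X$. Taking limits of the distance-preserving identity through the dense set shows that $\bar c$ is in fact an isometric embedding on all of $Y$. The remaining task is to verify pointwise convergence of the diagonal subsequence to $\bar c$ on all of $Y$, not just on the dense set. For an arbitrary $y\in Y$ and $\varepsilon>0$, pick $y_k$ with $\rho(y,y_k)<\varepsilon/3$; then
\[
d\bigl(c_n(y),\bar c(y)\bigr)\leq d\bigl(c_n(y),c_n(y_k)\bigr) + d\bigl(c_n(y_k),\bar c(y_k)\bigr) + d\bigl(\bar c(y_k),\bar c(y)\bigr),
\]
where the first and third terms are each at most $\varepsilon/3$ by the isometry property, and the middle term tends to $0$ by the diagonal extraction.

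The only mildly subtle step is the extension from the dense set to all of $Y$, which uses uniform continuity; everything else is a routine compactness-and-diagonalization argument, and this is precisely why the statement is standard enough that the paper relegates the detailed proof to the analogous Lemma 5.31 of \cite{BH1}.
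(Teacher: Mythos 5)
Your proof is correct and is precisely the standard diagonal argument the paper itself invokes (the paper omits the details, deferring to the analogous Lemma 5.31 of \cite{BH1}). The only implicit ingredient worth naming is that extending the isometric map from the countable dense subset to all of $Y$ requires the target $X$ to be complete, which does hold here because proper metric spaces are complete.
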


Next, we check that $\R$ projects surjectively onto the factors $Y$ and $F$.

\begin{lemma}
Let $\xi\in Y$. If $r$ is a ray representing $\xi$, then there exists a geodesic ray $q\in \perp(F)$ asymptotic to $r$.
\end{lemma}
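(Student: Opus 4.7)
The plan is to build $q$ as a limit of geodesic segments from $F$ to points along $r$, each meeting $F$ orthogonally. For each $n \geq 1$, set $y_n = \pi_F\bigl(r(n)\bigr)$, the nearest-point projection of $r(n)$ onto the closed convex set $F$, and let $s_n \colon [0, L_n] \to X$ be the unit-speed geodesic from $y_n$ to $r(n)$. By the $\CAT(0)$ projection property (Proposition~II.2.4 of \cite{BH1}), $\angle_{y_n}\bigl(r(n), z\bigr) \geq \pi/2$ for every $z \in F$, so each $s_n$ is orthogonal to $F$ at $y_n$ in the sense defined earlier.

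The heart of the argument is to establish: (i) the basepoints $y_n$ remain in a bounded region of $F$, and (ii) the lengths $L_n$ tend to infinity. Statement (ii) follows from (i), since if the $L_n$ stayed bounded while the $y_n$ lay in a bounded set, the endpoints $s_n(L_n) = r(n)$ would also lie in a bounded subset of $X$, contradicting $r(n) \to \xi \in \bndry X$. Statement (i) is where the isolated flats hypothesis is essential. The $\CAT(0)$ right-triangle inequality
\[
n^2 \;=\; d\bigl(x_0, r(n)\bigr)^2 \;\geq\; d\bigl(x_0, y_n\bigr)^2 + L_n^2
\]
obtained from the right angle at $y_n$ is not by itself enough to control the $y_n$ (as the Euclidean half-space example shows, the projection of a ray to a non-boundary point can genuinely drift to infinity on a flat in a general $\CAT(0)$ space). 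Instead one uses that under the identification of Theorem~\ref{theorem:TranTheorem} the set $\bndry F$ corresponds to a parabolic point of the relatively hyperbolic structure on $(\Gamma, \bbp)$, so that a ray asymptotic to a non-parabolic point $\xi$ has bounded projection onto the flat $F$; this is a standard feature of the way geodesics enter and exit horoballs in a relatively hyperbolic space and, in the isolated flats $\CAT(0)$ setting, follows from the work of Hruska and Kleiner \cite{HK1}. I expect this to be the main obstacle: extracting the bound on $\pi_F\bigl(r(n)\bigr)$ from the isolated flats geometry rather than from a purely metric calculation.

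Once (i) and (ii) are in hand, the conclusion is a standard limiting argument. Apply Lemma~\ref{lemma:diagonallemma} to the isometric embeddings $s_n|_{[0, T]}$ for each fixed $T > 0$, with basepoints $y_n$ lying in a compact subset of $F$; a diagonal extraction over $T = 1, 2, \ldots$ yields a geodesic ray $q \colon [0, \infty) \to X$, based at a point of $F$, which is a pointwise subsequential limit of the $s_n$. Orthogonality of $q$ to $F$ at $q(0)$ is inherited from the $s_n$ via the upper semicontinuity of Alexandrov angles in $\CAT(0)$ spaces, so $q \in \perp(F)$. Finally, since the terminal endpoints $s_n(L_n) = r(n)$ converge to $\xi$ in the cone topology while the $s_n$ converge on compact sets to $q$, the limit ray $q$ is asymptotic to $\xi$, which is what was to be shown.
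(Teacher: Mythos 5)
Your overall strategy is the same as the paper's: set $y_n=\pi_F\bigl(r(n)\bigr)$, show the projections stay in a bounded subset of $F$, and extract an orthogonal limit ray via Lemma \ref{lemma:diagonallemma}, with orthogonality inherited through upper semicontinuity of the Alexandrov angle and asymptoticity from convergence of the endpoints; your handling of that limiting step is, if anything, more explicit than the paper's. The gap is at the one step that carries all the content, the boundedness of the $y_n$. You correctly observe that no purely $\CAT(0)$ computation gives it and that the isolated flats hypothesis must enter, but you then assert that ``a ray asymptotic to a non-parabolic point has bounded projection onto the flat'' as a standard feature of horoball geometry following from \cite{HK1}, and you flag it yourself as the main obstacle. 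As stated this is not yet an argument: bounded penetration of $r$ into neighborhoods of $F$ (which is what the horoball picture directly gives) does not by itself control the nearest-point projections onto $F$ of the far-away points $r(n)$; one needs a contracting or visibility-type statement about the flat $F$ itself.

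The paper closes exactly this hole with a short contradiction argument: if $(y_n)$ were unbounded then, after passing to a subsequence, $y_n\to\eta\in\bndry F$ while $x_n=r(n)\to\xi\notin\bndry F$; Corollary 7 of \cite{HK2} (a visibility-type result for spaces with isolated flats) then gives a constant $M>0$ with $d\bigl(x_0,[x_n,y_n]\bigr)<M$ for all $n$, and since $y_n$ is the nearest point of $F$ to $x_n$ and $x_0\in F$, the triangle inequality yields $d(x_0,y_n)\le 2M$, contradicting unboundedness. So your outline matches the paper's proof, but to make it complete you must replace the appeal to ``the work of Hruska and Kleiner'' by a precise statement of this kind (the cited corollary, or an equivalent contraction property of flats) and run the resulting two-line estimate on the projections.
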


\begin{proof}Let $x_n=r(n)$ and $y_n=pr_F(x_n)$ for all $n\in \bbn$. Our first claim is that the sequence $(y_n)$ is bounded as $n\rightarrow \infty$. Assume not, then $y_n\rightarrow \eta\in\bndry F$. By Corollary 7 of \cite{HK2} there exists some constant $M>0$ such that $d\big(x_0, [x_n,y_n]\big)<M$ for all $n$. Then by the triangle inequality and the definition of $y_n$ as the orthogonal projection we have that $d(x_0,y_n)\leq 2M$. Then $(y_n)$ converges in $\overline{B}(x_0, 2M)$ and we may apply Lemma \ref{lemma:diagonallemma} to construct the orthogonal ray $q$.
\end{proof}

\begin{corollary}
\label{corollary: R surjectivity}
The projections $pr_Y(\R)$ and $pr_F(\R)$ are surjective.
\end{corollary}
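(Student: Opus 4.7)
The plan is to verify the two surjectivities separately, both essentially immediate from the preceding lemma. Throughout, I identify $\perp(F)$ with a subset of $Y$ via $q\mapsto q(\infty)$; this is legitimate because for any $q\in\perp(F)$ with $q(0)\in F$, the orthogonality condition combined with the CAT(0) angle comparison gives
\[
d\bigl(q(t),y\bigr)^2 \;\geq\; d\bigl(q(0),q(t)\bigr)^2 + d\bigl(q(0),y\bigr)^2 \;\geq\; t^2
\]
for every $y\in F$, hence $d\bigl(q(t),F\bigr)=t$, so $q(\infty)$ cannot lie in $\bndry F$.

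For surjectivity of $pr_Y$: given $\xi\in Y$ represented by a ray $r$, the preceding lemma furnishes $q\in\perp(F)$ asymptotic to $r$, so $[q]=\xi$. Taking $x:=q(0)\in F$, the pair $(x,q)$ lies in $\R$ (with $d(x,q(0))=0\leq A$) and $pr_Y(x,q)=\xi$. This is essentially a restatement of the lemma.

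For surjectivity of $pr_F$: I would exploit the cocompactness of $G=\stab_\Gamma(F)$ on $F$ together with the previously established $G$-invariance of $\R$. Assuming $Y\neq\emptyset$ (otherwise the statement is vacuous), the preceding step already produces at least one orthogonal ray $q_0$ with base point $y_0=q_0(0)\in F$. By the $G$-invariance lemma, each translate $g\cdot q_0$ lies in $\perp(F)$ and is based at $gy_0$. Since the fundamental domain for the $G$-action on $F$ has diameter $A$, the orbit $G\cdot y_0$ is $A$-dense in $F$: given any $x\in F$, one can find $g'$ with $(g')^{-1}x$ and some $G$-translate of $y_0$ both lying in the (closed) fundamental domain, yielding $g\in G$ with $d(x,gy_0)\leq A$. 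Then $(x,g\cdot q_0)\in\R$ and $pr_F(x,g\cdot q_0)=x$.

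There is no substantive obstacle here; the corollary is a short bookkeeping argument. The only point worth flagging is the implicit embedding $\perp(F)\hookrightarrow Y$ that makes $pr_Y$ well-defined as a map into $Y$, which rests on the CAT(0) angle estimate sketched above. Once that is secured, $pr_Y$-surjectivity is a one-line consequence of the preceding lemma, and $pr_F$-surjectivity reduces to $A$-density of a single $G$-orbit of base points in $F$.
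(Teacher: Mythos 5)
Your proposal is correct and follows essentially the same route as the paper: surjectivity of $pr_Y$ is immediate from the preceding lemma, and surjectivity of $pr_F$ follows from the $G$-invariance of $\R$ together with the $A$-density of the $G$-orbit of a single base point in $F$. Your extra check that an orthogonal ray's endpoint lies in $Y$ (via the $\CAT(0)$ angle comparison showing $d\bigl(q(t),F\bigr)=t$) is a detail the paper leaves implicit, but it is a welcome and correct addition rather than a deviation.
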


\begin{proof} The surjectivity of $pr_Y$ is immediate. For $pr_F$ we need only that each point in the flat is within a bounded distance of an element of $\perp(F)$. Let $A>0$ be the constant used in the definition of $\R$ above. We know from the previous lemma that there exists some $q\in\perp (F)$. The result follows as $\orb_G(q)\subset\perp(F)$ and $\orb_G(q)\cap F$ is $A$-dense.
\end{proof}

\medskip
\medskip
In order to check the properness of the projections, we need to know that as a sequence $(r_n)$ of orthogonal rays moves the corresponding sequence of asymptotic rays based at $x_0$ travel within a bounded distance of the points $\big(r_n(0)\big)$. We provide a quasiconvexity result below, which is a corollary of the following theorem presented by Hruska and Ruane in 4.14 of Theorem \cite{HR1}.

\begin{theorem}
\label{theorem:HRqc}
Let $X$ be a $\CAT(0)$ space with isolated flats with respect to $\F$. There exists a constant $L>0$ such that the following hold:
\begin{enumerate}
\item Given two flats $F_1,F_2\in\F$ with $c$ the shortest length geodesic from $F_1$ to $F_2$, we have that $F_1\cup F_2\cup c$ is $L$-quasiconvex in $X$.

\item Given a point $p$ and a flat $F\in\F$, with $c$ the shortest path from $F$ to $p$, then $F\cup c$ is $L$-quasiconvex in $X$.

\end{enumerate}

\end{theorem}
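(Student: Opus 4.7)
The plan is to prove both statements together via the \emph{contracting projection} property for flats in $\CAT(0)$ spaces with isolated flats, combined with the isolating condition. The key technical input is: for each flat $F \in \F$, there is a uniform constant $D$ (depending only on $X$ and $\F$) such that the nearest-point projection $\pi_F\colon X \to F$ is coarsely Lipschitz and strongly contracting, meaning any geodesic segment in $X$ lying outside $\N_D(F)$ projects to a set of diameter at most $D$. This is a standard consequence of the isolating condition (compare arguments in \cite{HK1,HK2}); if one prefers, one can first establish strong contraction and then deduce the quasiconvexity claim formally.

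For part (2), let $\gamma = [x,y]$ be a $\CAT(0)$ geodesic joining two points of $F \cup c$. If both endpoints lie in $F$, convexity of $F$ gives $\gamma \subset F$; if both lie on $c$, the conclusion is immediate. Otherwise we may assume $x \in F$ and $y$ lies on $c$ (possibly $y = p$). Partition $\gamma$ into the portion inside $\N_D(F)$ and the portion outside. On the inside portion every point is within $D$ of $F \cup c$. On the outside portion the contracting property forces $\pi_F(\gamma)$ to vary by at most $D$, so this arc runs essentially along a single ``vertical fiber'' based at some $q \in F$. Using that $c$ realizes $d(F,p)$, so meets $F$ orthogonally at $c(0)$, together with the isolating condition, one then bounds $d\bigl(q, c(0)\bigr)$ and concludes that the outer arc fellow-travels $c$ at distance at most some $L$.

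Part (1) follows from a two-sided version of the same argument: $\gamma$ decomposes into an initial sub-arc in $\N_D(F_1)$, a middle sub-arc outside $\N_D(F_1)\cup\N_D(F_2)$, and a terminal sub-arc in $\N_D(F_2)$. The contracting projections $\pi_{F_1}$ and $\pi_{F_2}$ are both nearly constant on the middle arc; the isolating condition, applied to preclude the middle arc from running deeply along a third flat, then pins the middle arc to a uniform neighborhood of $c$. Alternatively one may deduce (1) from (2) by inserting an intermediate point on $c$ and applying (2) twice, taking the worst constant.

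The principal obstacle is precisely the middle ``transition'' arc in (1), and its analog in (2). A priori the geodesic could detour via some unrelated flat $F' \in \F$ whose neighborhood meets both $\N_D(F_1)$ and $\N_D(F_2)$. The isolating condition caps the diameter of $\N_\rho(F') \cap \N_\rho(F_i)$ by $\kappa(\rho)$, so such detours are confined to a bounded region; combining these caps with the contracting estimate and the capturing condition produces a uniform $L$ depending only on $X$ and $\F$. Making the control on the ``vertical fiber'' fully quantitative, and ruling out the case where the geodesic doubles back near $F_1$ or $F_2$ along a different piece, is the main technical burden of the argument.
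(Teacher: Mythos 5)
First, a point of comparison: the paper does not actually prove this statement --- it is quoted from Hruska--Ruane \cite{HR1} (Theorem 4.14 there) and functions purely as an imported black box from which Lemma \ref{corollary:quasiconvex} is then deduced by a limiting argument. So there is no internal proof for your sketch to match; the question is whether it stands as an independent proof, and as written it does not.

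The entire weight of your argument rests on the asserted ``key technical input'' that each $F\in\F$ has a uniformly strongly contracting nearest-point projection (geodesics outside $\N_D(F)$ project to diameter at most $D$). This is true, but it is not ``a standard consequence of the isolating condition'': a flat in a general $\CAT(0)$ space is as non-contracting as a convex set can be, and the uniform contraction of the flats in $\F$ is essentially equivalent to the relative hyperbolicity of $X$ with respect to $\F$, i.e.\ to the main theorem of Hruska--Kleiner \cite{HK1} (or to Sisto-type results on peripheral sets). Invoking it without proof relocates the substance of the theorem into an unproved lemma of comparable depth to the statement itself. Granting that lemma, your part (2) does go through: convexity of $d(\cdot,F)$ makes the far portion a single terminal arc, its entry point lies within roughly $2D$ of $c(0)$ because $\pi_F(y)=c(0)$, and $\CAT(0)$ convexity then pins that arc to $c$. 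But in part (1) you explicitly leave the middle ``transition'' arc --- including possible excursions along a third flat --- as ``the main technical burden,'' which is exactly where the isolating condition must be combined quantitatively with the contraction estimates; and the proposed shortcut of deducing (1) from (2) by inserting a midpoint $m$ on $c$ fails as stated, since a geodesic from $F_1$ to $F_2$ is not a concatenation of geodesics through $m$, and (2) only governs geodesics with both endpoints in $F_1\cup c$. Either cite the contraction/relative-hyperbolicity input honestly (at which point you are essentially reproducing the argument behind \cite{HR1}), or supply proofs of it and of the middle-arc estimate; as it stands the proposal has the outline of a proof but omits its core.
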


\begin{lemma}
\label{corollary:quasiconvex}
Let $F\in\mathcal{F}$ and $q\in\perp(F)$ then there exists a constant $L$ such that $q\cup F$ is $L$-quasiconvex in $X$.
\end{lemma}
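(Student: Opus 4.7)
The plan is to reduce the claim to part (2) of Theorem~\ref{theorem:HRqc}, using the same constant $L$ furnished by that theorem. The central observation is that because $q$ is orthogonal to the convex set $F$ in a $\CAT(0)$ space, the basepoint $q(0)$ is the nearest point of $F$ to every $q(t)$, so each initial segment $q|_{[0,t]}$ \emph{is} the shortest path from $F$ to the point $q(t)$. This lets us invoke Theorem~\ref{theorem:HRqc}(2) on the fly at any scale, with uniform constant.

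More concretely, I would take two arbitrary points $a,b\in q\cup F$ and show $[a,b]\subset \mathcal{N}_L(q\cup F)$ by cases. If $a,b\in F$ then $[a,b]\subset F$ by convexity of the flat, and if $a,b\in q$ then $[a,b]\subset q$. The remaining case is $a\in F$ and $b=q(t)\in q$ for some $t>0$. Setting $p:=q(t)$, I would first verify using the orthogonality condition $\angle_{q(0)}(q(s),y)\geq \pi/2$ for all $y\in F$, $s>0$ together with the standard first-variation/projection lemma for convex subsets of a $\CAT(0)$ space (see \cite{BH1}, II.2) that the unique nearest-point projection of $p$ onto $F$ is $q(0)$; consequently the shortest path $c$ from $F$ to $p$ is exactly $q|_{[0,t]}$. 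Theorem~\ref{theorem:HRqc}(2) then gives that $F\cup q|_{[0,t]}$ is $L$-quasiconvex. Since both $a$ and $b$ lie in $F\cup q|_{[0,t]}$, we conclude
\begin{equation*}
[a,b]\subset \mathcal{N}_L\bigl(F\cup q|_{[0,t]}\bigr)\subset \mathcal{N}_L(F\cup q),
\end{equation*}
as desired.

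The main (and really only) subtlety is verifying that $q(0)$ is the nearest point of $F$ to $q(t)$, so that the segment $q|_{[0,t]}$ qualifies as ``the shortest path from $F$ to $p$'' in the hypothesis of Theorem~\ref{theorem:HRqc}(2); once this is in hand, quasiconvexity of $q\cup F$ follows from a routine case analysis with a uniform constant inherited directly from Theorem~\ref{theorem:HRqc}.
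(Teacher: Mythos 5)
Your proof is correct and takes essentially the same route as the paper: the paper also deduces the lemma from Theorem~\ref{theorem:HRqc}(2) applied to the initial segments $\big[q(0),q(n)\big]$, phrased as ``passing to the limit as $n\to\infty$.'' Your explicit check that orthogonality forces $q(0)$ to be the nearest-point projection of $q(t)$ onto $F$ (so each initial segment really is the shortest path from $F$) is precisely the step the paper's one-line proof leaves implicit.
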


\begin{proof} The proof follows from the Theorem \ref{theorem:HRqc} by passing to the limit as $n\rightarrow\infty$ of the geodesic segments $\big[q(n),q(0)\big]$.
\end{proof}

\medskip
To prove properness we will also need to know that convergence of orthogonal rays in the space $X$ corresponds to convergence of points in $Y$.

\medskip
\begin{lemma}
\label{lemma: convergence at infinity}
 If $(r_n$) is a sequence of rays in $\perp(F)$ which converge pointwise to an element of $\perp(F)$, then the corresponding points at infinity converge in the topology on $\bndry X$.
\end{lemma}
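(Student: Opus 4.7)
The plan is to work with the cone-topology description of convergence in $\bndry X$: we must show that the geodesic rays from the basepoint $x_0\in F$ to the points at infinity $r_n(\infty)$ converge uniformly on compact subsets of $[0,\infty)$ to the geodesic ray from $x_0$ to $r(\infty)$. Write $\sigma_n$ for the geodesic ray from $x_0$ to $r_n(\infty)$ and $\sigma$ for the geodesic ray from $x_0$ to $r(\infty)$, each parametrized by arc length. Establishing $\sigma_n\to\sigma$ uniformly on compacta is exactly what the basic neighborhoods $V(\sigma,t,\epsilon)$ demand.

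Since each $\sigma_n$ is $1$-Lipschitz with $\sigma_n(0)=x_0$, the Arzel\`a--Ascoli argument underlying Lemma \ref{lemma:diagonallemma} applies: every subsequence of $(\sigma_n)$ has a further subsequence $(\sigma_{n_k})$ converging pointwise, and hence uniformly on compact subsets of $[0,\infty)$, to some geodesic ray $\sigma_\infty$ based at $x_0$. It suffices to show that every such subsequential limit $\sigma_\infty$ coincides with $\sigma$; then the full sequence must converge to $\sigma$.

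The key tool is convexity of the distance function between two geodesic rays in a $\CAT(0)$ space. Because $\sigma_n$ is asymptotic to $r_n$, the function $t\mapsto d\bigl(\sigma_n(t),r_n(t)\bigr)$ is convex and bounded on $[0,\infty)$, and therefore nonincreasing; in particular it is bounded above by its value at $t=0$, which is $d\bigl(x_0,r_n(0)\bigr)$. Writing $y_n=r_n(0)$ and $y=r(0)$, the pointwise convergence $r_n\to r$ yields $y_n\to y$, so there is a uniform constant $C$ with $d(x_0,y_n)\leq C$, and hence $d\bigl(\sigma_n(t),r_n(t)\bigr)\leq C$ for every $n$ and every $t\geq 0$.

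Passing to the limit along $n_k$ and using $r_{n_k}(t)\to r(t)$ together with $\sigma_{n_k}(t)\to\sigma_\infty(t)$ for each fixed $t$, one obtains $d\bigl(\sigma_\infty(t),r(t)\bigr)\leq C$ for all $t\geq 0$. Thus $\sigma_\infty$ is asymptotic to $r$, so $\sigma_\infty(\infty)=r(\infty)=\sigma(\infty)$; uniqueness of the geodesic ray in a $\CAT(0)$ space from a fixed basepoint to a fixed point at infinity then forces $\sigma_\infty=\sigma$. The step requiring the most care is the $n$-uniformity of the bound $d(\sigma_n(t),r_n(t))\leq C$, and this is precisely what convexity plus the hypothesis $r_n(0)\to r(0)$ deliver; the quasiconvexity of $r\cup F$ from Lemma \ref{corollary:quasiconvex} is not needed here, although it offers an alternative route by controlling where $\sigma_n$ can travel.
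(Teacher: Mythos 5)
Your proof is correct and follows essentially the same route as the paper: extract a limit of the based representatives via the Arzel\`a--Ascoli/diagonal argument (Lemma \ref{lemma:diagonallemma}), use the convergence $r_n(0)\to r(0)$ to get a uniform bound between each based ray and its orthogonal companion, and observe that pointwise convergence of based rays is exactly cone-topology convergence of the points at infinity. Your explicit identification of every subsequential limit with $\sigma$ via convexity and uniqueness of the based ray asymptotic to $r$ is slightly more careful than the paper's phrasing, but it is the same argument in substance.
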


\begin{proof} Let $x_0$ be the base point for the cone topology on $\bndry X$ and let $r\in\perp(F)$ be limit ray. For each ray $r_n$ there exists a an asymptotic ray $c_n$ based at $x_0$ (see \cite{BH1} Chapter II.8 Proposition 8.2). Define $D=d\big(x_0,r(0)\big)$, then $d\big(r_n(t),c_n(t)\big)\leq D$ for every $t\in\bbr$. Thus we may apply Lemma \ref{lemma:diagonallemma} with $K=\overline{B}(r(0),D)$ to find the limiting based ray $c$. Then $c$ is asymptotic to $r$. The claim is that $c_n(\infty)\rightarrow c(\infty)$ in the cone topology. Fix $\epsilon>0$ and let $s>0$. Then $U(c,s,\epsilon)=\bigset{c'\in\bndry_{x_0}X}{d\big(c'(s),c(s)\big)<\epsilon}$ is a basic neighborhood of $c$. As $c_n\rightarrow c$ pointwise we have that there exists $N\in \bbn$ $d\big(c_m(s),c(s)\big)<\epsilon$ for every $m>N$. Thus we have the claim.
\end{proof}

\medskip
We now wish to prove that the projections $pr_Y$ and $pr_F$ are proper. In order to do so we will need several lemmas concerning the relationship between base points of orthogonal rays and the based rays which represent them in the boundary (See Figure 1 for an intuitive picture).

\begin{figure}[h!]
\includegraphics[scale=1.3]{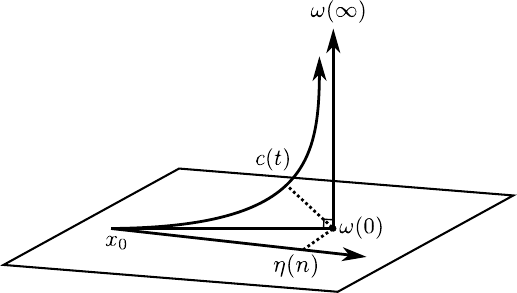}
\captionof{figure}{This figure illustrates the relationship between an orthogonal ray $\omega$, its based representative $c$, and a neighborhood $U(\eta, n,\epsilon)$.}
\label{fig: proj to flat}
\end{figure}

\begin{lemma}
\label{lemma:bound1}
There is a constant $M=M(L)>0$ such that for any $\omega\in\perp(F)$, $d\big(\omega(0),c(t)\big)<M$ where $c$ is the ray based at $x_0$ representing $\omega(\infty)$ and $t=d\big(x_0,\omega(0)\big)$.

\end{lemma}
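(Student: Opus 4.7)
The plan is to use the $L$-quasiconvexity of $F \cup \omega$ from Lemma~\ref{corollary:quasiconvex} to locate a ``transition point'' on $c$ that lies within a uniform constant of $\omega(0)$, and then match arc-length parameters via the triangle inequality. Geometrically, $c$ begins on $F$, hugs $F$ for a while, and eventually peels away to follow $\omega$; the switch must happen near $\omega(0)$, the foot of the perpendicular $\omega$ on $F$, and since $c$ has unit speed, the parameter of this switch must be close to $t = d\bigl(x_0, \omega(0)\bigr)$.

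To begin, I would apply Lemma~\ref{corollary:quasiconvex} to the approximating segments $c_n = [x_0, \omega(n)]$ and take the pointwise limit $c_n \to c$ to conclude $c\bigl([0,\infty)\bigr) \subset \N_L(F \cup \omega)$. Next I would show that $\omega(\infty) \notin \bndry F$: orthogonality together with the $\CAT(0)$ Pythagorean inequality yields $\pi_F\bigl(\omega(s)\bigr) = \omega(0)$ and hence $d\bigl(\omega(s), F\bigr) = s$; were $\omega(\infty) \in \bndry F$, then $\omega$ would be asymptotic to a ray in $F$, and convexity of the distance function for asymptotic rays would bound $d\bigl(\omega(s), F\bigr)$, contradicting its linear growth.

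Now define $s_1 = \sup\bigset{s \geq 0}{c(s) \in \N_L(F)}$. The claim is $s_1 < \infty$: otherwise we would obtain $s_n \to \infty$ with $c(s_n) \in \N_L(F)$, and since $d\bigl(c(s_n), \pi_F(c(s_n))\bigr) \leq L$ while $c(s_n) \to \omega(\infty)$, the two sequences would share the same boundary limit, forcing $\omega(\infty) \in \bndry F$. By closedness, $c(s_1) \in \N_L(F)$; for every $s > s_1$ we have $c(s) \notin \N_L(F)$, so the quasiconvex containment places $c(s)$ in $\N_L(\omega)$, and letting $s \downarrow s_1$ gives $c(s_1) \in \N_L(\omega)$ as well.

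Finally, pick $f \in F$ and $\omega(r)$ each within $L$ of $c(s_1)$. Then $d\bigl(\omega(r), F\bigr) \leq d\bigl(\omega(r), f\bigr) \leq 2L$, which combined with $d\bigl(\omega(r), F\bigr) = r$ forces $r \leq 2L$, so
\begin{equation*}
  d\bigl(c(s_1), \omega(0)\bigr) \leq d\bigl(c(s_1), \omega(r)\bigr) + r \leq 3L.
\end{equation*}
The reverse triangle inequality then gives $|s_1 - t| = \bigl|d\bigl(x_0, c(s_1)\bigr) - d\bigl(x_0, \omega(0)\bigr)\bigr| \leq 3L$, and since $c$ has unit speed $d\bigl(c(t), c(s_1)\bigr) = |t - s_1| \leq 3L$. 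Thus $d\bigl(\omega(0), c(t)\bigr) \leq 6L$, and $M = 6L$ works. The principal obstacle I anticipate is establishing $\omega(\infty) \notin \bndry F$: without it, $s_1$ could be infinite and the entire strategy collapses. Fortunately this follows from orthogonality alone, via the $\CAT(0)$ Pythagorean inequality, without invoking any hyperbolicity hypothesis on $X$.
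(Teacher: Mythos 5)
Your proof is correct and follows essentially the same route as the paper's: $L$-quasiconvexity of $F\cup\omega$ produces a point of $c$ within $L$ of both $F$ and $\omega$, orthogonality places that point within $3L$ of $\omega(0)$, and matching arc-length parameters yields $M=6L$. The only difference is that you carefully justify the existence of this transition point (via $\omega(\infty)\notin\bndry F$ and the supremum argument for $s_1$), a step the paper simply asserts.
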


\begin{proof} Let $\beta \colon [0,t]\to F$ be the geodesic with $\beta(0)=x_0$ and $\beta(t)=\omega(0)$. By Corollary \ref{corollary:quasiconvex} there exists a constant $L$ such that $c$ is contained within the $L$-tubular neighborhood of $\omega\cup F$. This implies that there exists an $s\in[0,\infty)$, $x\in im(\omega)$, and $y\in F$ with $d\big(c(s),x\big)\leq L$ and $d\big(c(s),y\big)\leq L$. By orthogonal projection we know that $d\big(x,\omega(0)\big)\leq 2L$, which implies that $d\big(c(s),\omega(0)\big)\leq 3L$. Because, $\beta$ and $c$ are geodesics the triangle inequality gives us that $t\in \big[s-3L,s+3L\big]$. Thus setting $M=6L$ we are done.
\end{proof}

\begin{lemma}
\label{lemma:bound2}
Suppose $\epsilon>0$ and let $M=M(L)$ be the constant found in Lemma \ref{lemma:bound1}. There is a constant $\delta=\delta(M,\epsilon)$ such that for any $n>0$ and $\eta\in\bndry F$ the following holds: if $\omega\in\perp(F)$ with $\omega(0)\in U(\eta,n,\epsilon)$ and $c$ is the ray based at $x_0$ representing $\omega(\infty)$, then $c(n)\in U(\eta,n,\delta)$.

\end{lemma}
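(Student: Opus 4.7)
The plan is to exploit the fact that $c$ and the geodesic from $x_0$ to $\omega(0)$ stay close, and then compare the latter to the ray representing $\eta$ using the hypothesis $\omega(0)\in U(\eta,n,\epsilon)$. Since both $x_0$ and $\omega(0)$ lie in the flat $F$, the geodesic segment from $x_0$ to $\omega(0)$ lies entirely in $F$; call this geodesic $\alpha$, and set $t=d(x_0,\omega(0))$. Note that the condition $\omega(0)\in U(\eta,n,\epsilon)$ forces $n<t$ and identifies $\pi_n(\omega(0))$ with $\alpha(n)$, so the hypothesis translates to $d(\alpha(n),c_\eta(n))<\epsilon$, where $c_\eta$ denotes the ray in $F$ based at $x_0$ representing $\eta$.

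Next, I would apply Lemma \ref{lemma:bound1} directly to $\omega$ and $c$: this yields $d(\alpha(t),c(t))=d(\omega(0),c(t))<M$. Now $\alpha$ and $c$ are two geodesics in the $\CAT(0)$ space $X$ that share the initial point $x_0$, so the distance function $s\mapsto d(\alpha(s),c(s))$ is convex on $[0,t]$ (this is the standard convexity-of-the-metric property for $\CAT(0)$ spaces; see Chapter II.2 of \cite{BH1}). Since this function vanishes at $s=0$ and is strictly less than $M$ at $s=t$, convexity gives $d(\alpha(n),c(n))\leq (n/t)\cdot M<M$ for $n\in[0,t]$.

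Combining the two estimates by the triangle inequality yields
\[
d(c(n),c_\eta(n))\leq d(c(n),\alpha(n))+d(\alpha(n),c_\eta(n))<M+\epsilon,
\]
so setting $\delta=M+\epsilon$ gives $d(\pi_n(c(n)),c_\eta(n))=d(c(n),c_\eta(n))<\delta$, which is exactly the condition $c(n)\in U(\eta,n,\delta)$ (up to the usual convention that one reads the strict distance inequality as defining the desired neighborhood).

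There is no real obstacle here; the only subtlety is checking that the projection $\pi_n(\omega(0))$ really is the point $\alpha(n)$ on the geodesic $\alpha\subset F$, which follows from uniqueness of projection onto $\overline{B}(x_0,n)$ in a $\CAT(0)$ space together with the fact that $\alpha$ is the unique geodesic from $x_0$ to $\omega(0)$. The key idea is simply that Lemma \ref{lemma:bound1} controls the pair $(\alpha,c)$ at the single parameter $t$, and $\CAT(0)$ convexity then propagates that control to every intermediate parameter $n<t$, after which the hypothesis and the triangle inequality finish the job.
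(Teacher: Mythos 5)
Your argument is correct and is essentially the paper's: apply Lemma \ref{lemma:bound1} to get $d\big(\omega(0),c(t)\big)\leq M$ at the parameter $t=d\big(x_0,\omega(0)\big)$, use convexity of the metric along the two geodesics issuing from $x_0$ to propagate this to the parameter $n$, and finish with the triangle inequality against the ray representing $\eta$. The only divergence is in the case analysis: you dismiss the possibility $d\big(x_0,\omega(0)\big)\leq n$ by invoking the strict inequality $t<d\big(x,c(0)\big)$ in the definition of $V(c,t,\epsilon)$, whereas the paper treats the cases $a=n$ and $a<n$ separately (in the last case comparing $c(a)$ to $\eta(n)$ and using that $c$ and $\eta$ are unit-speed geodesics), which is why its constant is $\delta=2(M+\epsilon)$ rather than your $M+\epsilon$. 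Your reading is defensible under the paper's formal definition of the cone-topology neighborhoods, but the paper's own usage of $U(\eta,n,\epsilon)$ (both in this proof and in Lemma \ref{lemma:flat nbrhd for V}) evidently allows points at distance at most $n$ from $x_0$, in which case your proof as written omits a (short, but nonvacuous) case; it is handled exactly as in the paper by noting $d\big(c(a),\eta(n)\big)\leq M+\epsilon$ and then paying an extra $M+\epsilon$ to move from $c(a)$ to $c(n)$.
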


\begin{proof} Let $\beta\colon [0,a]\to F$ be the geodesic with $\beta(0)=x_0$ and $\beta(a)=\omega(0)$. If $M$ is the constant from Lemma \ref{lemma:bound1}, then we know that $d\big(\beta(a),c(a)\big)\leq M$. So, if $a=n$ we are done.

Assume that $n<a$. Then by convexity $d\big(c(n),\beta(n)\big)\leq M$ and $d\big(\beta(n),\eta(n)\big)\leq\epsilon$, which implies that $d\big(c(n),\eta(n)\big)\leq M +\epsilon$.

If $a<n$, then $\beta(a)=\beta(n)$. By hypothesis $\beta(a)\in U(\eta, n,\epsilon,)$, which implies that $d\big(\beta(a),\eta(n)\big)<\epsilon$. So, $d\big(c(a),\eta(n)\big)\leq M +\epsilon$, but $c$ and $\eta$ are geodesics so we have that $d\big(c(n),\eta(n)\big)\leq 2(M +\epsilon)$. Set $\delta=2(M+\epsilon)$. 
\end{proof}

\begin{lemma}
\label{lemma:bound3}
Let $W\subset Y$ be compact. The set $C$ of all points $x\in F$ with $d\big(x,w(0)\big)\leq A$ for some $w\in W$ is bounded.
\end{lemma}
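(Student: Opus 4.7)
The plan is to argue by contradiction. Suppose $C$ is unbounded. Then we can select a sequence $(x_n)\subset C$ with $d(x_0,x_n)\to\infty$, and, unpacking the definition of $C$ via $\R$, for each $n$ a ray $w_n\in\perp(F)$ satisfying $w_n(\infty)\in W$ and $d(x_n,w_n(0))\leq A$. Since $W$ is compact, after passing to a subsequence we may assume $w_n(\infty)\to\eta$ for some $\eta\in W\subset Y=\bndry X\setminus\bndry F$. The goal is to derive a contradiction by showing $\eta$ in fact lies in $\bndry F$.

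Let $c_n$ denote the geodesic ray based at $x_0$ asymptotic to $w_n$, so that $c_n(\infty)=w_n(\infty)$, and set $t_n=d(x_0,w_n(0))$. Lemma \ref{lemma:bound1} gives $d(c_n(t_n),w_n(0))<M$ for the constant $M=M(L)$. Because $F$ is a convex flat subspace of $X$, the segment $\beta_n:[0,t_n]\to F$ from $x_0$ to $w_n(0)$ is also a geodesic of $X$. Since $c_n|_{[0,t_n]}$ and $\beta_n$ are geodesics of the same length sharing the endpoint $x_0$, CAT(0) convexity of the distance function yields
\[
d\bigl(c_n(s),\beta_n(s)\bigr)\leq \frac{s}{t_n}\,d\bigl(c_n(t_n),w_n(0)\bigr)\leq M \qquad\text{for all } s\in[0,t_n].
\]

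Since $d(x_0,w_n(0))\geq d(x_0,x_n)-A\to\infty$, we have $t_n\to\infty$, so $w_n(0)$ escapes every bounded subset of $F$. Because $F$ is a Euclidean flat, after a further subsequence the rays in $F$ based at $x_0$ through $w_n(0)$ converge pointwise to a ray $\beta:[0,\infty)\to F$ with $\beta(\infty)=:\eta_F\in\bndry F$. Applying Lemma \ref{lemma:diagonallemma} with $K=\{x_0\}$, pass to another subsequence so that $c_n\to c$ pointwise for some ray $c$ based at $x_0$. For any fixed $s\geq 0$, once $t_n\geq s$ the displayed bound gives $d(c_n(s),\beta_n(s))\leq M$, and taking limits yields $d(c(s),\beta(s))\leq M$ for every $s$. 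Hence $c$ and $\beta$ are asymptotic, so $c(\infty)=\beta(\infty)=\eta_F\in\bndry F$. Finally, pointwise convergence of based rays $c_n\to c$ implies $c_n(\infty)\to c(\infty)$ in the cone topology (this is the same argument used in Lemma \ref{lemma: convergence at infinity}), and together with $c_n(\infty)\to\eta$ we conclude $\eta=\eta_F\in\bndry F$, contradicting $\eta\in Y$.

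The central technical step is the convexity estimate that forces $c_n$ to track the in-flat geodesic $\beta_n$ uniformly on the whole interval $[0,t_n]$; this is what converts the geometric fact that the base points $w_n(0)\in F$ drift to a boundary point of $F$ into a statement about the limit of $c_n(\infty)$ in $\bndry X$. The remaining ingredients — compactness to extract convergent subsequences of both $\beta_n$ (inside the Euclidean flat) and $c_n$ (via Lemma \ref{lemma:diagonallemma}), and the continuity of the map sending a based ray to its endpoint at infinity — are standard once the estimate is in hand.
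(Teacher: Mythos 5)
Your proof is correct and takes essentially the same route as the paper's: assume $C$ is unbounded, let the base points of orthogonal rays over $W$ escape in $F$, and use the fact that the based representatives track the flat out to those base points (Lemma \ref{lemma:bound1}) to force $w_n(\infty)=c_n(\infty)$ to converge to a point of $\bndry F$, contradicting compactness of $W\subset Y$. The only difference is cosmetic: you re-derive the tracking estimate via CAT(0) convexity and extract limit rays with Lemma \ref{lemma:diagonallemma}, whereas the paper simply invokes Lemma \ref{lemma:bound2} (whose proof is exactly that convexity argument) together with Lemma \ref{lemma: convergence at infinity}.
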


\begin{proof} Assume not, then there exists a sequence $(c_n)$ in $\perp(F)$ with $c_n(0)\in C$ for every $n\in\bbn$ such that $c_n(0)\rightarrow \eta$ as $n\rightarrow\infty$ for some $\eta\in\bndry F$. For every $n$ let $r_n$ be the ray based at the base point $x_0\in F$ and asymptotic to $c_n$.

Recall that for any $D$ the sets $U(\eta, t,D)$ form a neighborhood base at $\eta$. Fix $\epsilon >0$. Then $c_n(0)\rightarrow \eta$ implies that for any $t\in[0,\infty)$ we have $c_n(0)$ lies in $U(\eta, t,\epsilon)$ for all but finitely many $n$. Lemma \ref{lemma:bound2} then gives us that for any $t\in[0,\infty)$ we have $r_n(0)$ lies in $U(\eta, t,\delta)$ for all but finitely many $n$, which by Lemma \ref{lemma: convergence at infinity} implies that $r_n(\infty)\rightarrow \eta$, a contradiction.
\end{proof}

\medskip
\begin{lemma}
\label{lemma:proper projection}
The projections $pr_Y(\R)$ and $pr_F(\R)$ are proper.
\end{lemma}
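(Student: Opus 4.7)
The plan is to prove both projections proper by the same template: given a compact target, show that the set of possible base points of orthogonal rays appearing in the preimage is bounded, then invoke Lemma \ref{lemma:diagonallemma} to extract a pointwise limit ray, and finally verify that the limit still lies in $\mathcal{R}$ and projects into the target. The only structural difference between the two projections is how one obtains the bound on base points.

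For $pr_F$, fix a compact $K\subset F$ and a sequence $(x_n,q_n)\in pr_F^{-1}(K)$. The inequality $d(x_n,q_n(0))\leq A$ together with $x_n\in K$ forces $q_n(0)\in\overline{N}_A(K)\cap F$, which is compact. After passing to a subsequence along which $x_n\to x\in K$ and $q_n(0)$ converges, Lemma \ref{lemma:diagonallemma} yields a further subsequence with $q_n\to q$ pointwise for some isometric embedding $q\colon[0,\infty)\to X$. The orthogonality condition $\angle_{q(0)}(q(t),y)\geq\pi/2$ for $y\in F$ survives pointwise limits, so $q\in\perp(F)$; the inequality $d(x,q(0))\leq A$ passes to the limit, giving $(x,q)\in\mathcal{R}$ and compactness of $pr_F^{-1}(K)$.

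For $pr_Y$, fix a compact $W\subset Y$ and a sequence $(x_n,q_n)\in pr_Y^{-1}(W)$. The boundedness of the $x_n$'s is exactly the content of Lemma \ref{lemma:bound3}, and from $d(x_n,q_n(0))\leq A$ this also confines $q_n(0)$ to a bounded subset of $F$. The same diagonal extraction produces a limit $(x,q)\in\mathcal{R}$ with $q\in\perp(F)$. Lemma \ref{lemma: convergence at infinity} then promotes pointwise convergence $q_n\to q$ to boundary convergence $q_n(\infty)\to q(\infty)$, and closedness of $W$ gives $q(\infty)\in W$. One small check remains, namely that $q(\infty)\notin\bndry F$: this is automatic, because an orthogonal ray based in $F$ cannot be asymptotic to a ray contained in $F$, since in a $\CAT(0)$ space two asymptotic rays from the same point coincide, contradicting orthogonality. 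Thus $(x,q)\in pr_Y^{-1}(W)$, and $pr_Y^{-1}(W)$ is compact.

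I do not expect a substantive obstacle in executing this plan. All the nontrivial analytic content has already been isolated in the preceding results: diagonal extraction of orthogonal rays with bounded base points (Lemma \ref{lemma:diagonallemma}), transfer from pointwise convergence to boundary convergence (Lemma \ref{lemma: convergence at infinity}), and the crucial bound on $x$-coordinates over a compact $W\subset Y$ (Lemma \ref{lemma:bound3}, which is where the $\CAT(0)$ isolated-flats geometry really enters, via Lemmas \ref{lemma:bound1} and \ref{lemma:bound2}). The present lemma is a clean combination of these three ingredients.
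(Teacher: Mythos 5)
Your proposal is correct and takes essentially the same route as the paper: a bound on base points (Lemma \ref{lemma:bound3} for the $pr_Y$ case, $d(x,q(0))\leq A$ over a compact $K\subset F$ for the $pr_F$ case), diagonal extraction of a limit ray via Lemma \ref{lemma:diagonallemma} with orthogonality surviving the limit by upper semicontinuity of the Alexandrov angle, and promotion to boundary convergence via Lemma \ref{lemma: convergence at infinity}. The only cosmetic difference is that the paper phrases the $pr_Y$ case as showing the set $C$ of admissible base points in $F$ is bounded and closed, whereas you verify sequential compactness of $pr_Y^{-1}(W)$ directly; the ingredients are identical.
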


\begin{proof} Let $K\subset F$ be compact. We want that $pr_F^{-1}(K)$ is compact. Let $(r_n$) be a sequence of rays in $\perp(F)$ with base points in $K$. As $K$ is compact using Lemma \ref{lemma:diagonallemma} we know that the sequence $(r_n)$ has a subsequence which converges to a ray $r$ based in $K$. Set $x_n=r_n(1)$, $p_n=r_n(0)$, and let $y$ be some fixed point in $F$. We know that $\angle_{p_n}(y,x_n)\geq\pi/2$ and that the function $(p,x,y)\mapsto\angle_p(x,y)$ is upper semi-continuous for all $p,x,y\in X$ (see \cite{BH1} Proposition II.3.3(1)), thus $r$ must be a ray orthogonal to $F$. By Lemma \ref{lemma: convergence at infinity} we have that the sequence of points at infinity converges, which implies that $pr_F^{-1}(K)$ is compact.

Now, let $W$ be a compact subset of $Y$ and $C$ the set of all points $x\in F$ with $d\big(x,w(0)\big)\leq A$ for some $w\in W$. We need that $C$ is compact. By Lemma \ref{lemma:bound3} the set $C$ is bounded. We only need that $C$ is closed.

Assume that $c$ is a limit point of $C$. Then there exists a sequence $(c_i)_{i=0}^{\infty}$ of points in $C$ which converge to $c$, and there exists a sequence of rays $(w_i)_{i=0}^{\infty}$ in $\perp(F)$ with $d\big(c_i,w_i(0)\big)\leq A$ and $w_i(\infty)\in W$ for every $i$. The sequence $\big(w_i(0)\big)_{i=0}^{\infty}$ converges in $\overline{\N}_A(C)$, so by a diagonal argument we have that $(w_i)$ converges to a ray $w\in\perp(F)$. Lemma \ref{lemma: convergence at infinity} and compactness of $W$ give that $w(\infty)\in W$. It is now easy to see that $d\big(c,w(0)\big)\leq A$.\end{proof}

\medskip
Combining the previous results we may apply Lemma \ref{lemma:diagonalaction} to conclude:

\begin{theorem}
\label{theorem: prprccpt}
 Let $G =\stab_{\Gamma}(F)$. Then $G$ acts properly and cocompactly on $\bndry X\backslash\bndry F$.
 \end{theorem}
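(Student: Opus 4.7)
The entire machinery of the section has been built precisely so that this theorem follows by applying Lemma \ref{lemma:diagonalaction} with $A = F$, $B = Y = \bndry X \setminus \bndry F$, and the relation $\R \subset F \times Y$ defined above via orthogonal rays from $F$ to points at infinity. So my plan is simply to verify each of the hypotheses of Lemma \ref{lemma:diagonalaction} has been established, and then invoke it.

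First, I would note that $G = \stab_\Gamma(F)$ acts properly and cocompactly on $F$; this is Lemma 3.1.2 of \cite{HK1}, and is in fact the reason the constant $A$ (the diameter of a fundamental domain for $G \curvearrowright F$) was chosen in the definition of $\R$. Next, $\R$ is $G$-invariant: this was checked in the lemma immediately after the definition of $\R$, using that $G$ acts by isometries and that orthogonal projection onto $F$ commutes with the $G$-action (by uniqueness of closest points in a convex set). The coordinate projections $pr_F \colon \R \to F$ and $pr_Y \colon \R \to Y$ are surjective by Corollary \ref{corollary: R surjectivity}, and they are proper by Lemma \ref{lemma:proper projection}. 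With these four items in hand, Lemma \ref{lemma:diagonalaction} directly gives the equivalence of ``$G$ acts properly and cocompactly on $F$'' and ``$G$ acts properly and cocompactly on $Y$,'' and since the former holds, so does the latter.

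The only thing worth spelling out in slightly more detail is the chain of implications, since the lemma is stated as a three-way equivalence and I only need $(a) \Leftrightarrow (c)$. My plan is to write one sentence observing that $G \curvearrowright F$ is proper and cocompact by \cite{HK1}, a second sentence gathering the $G$-invariance, surjectivity, and properness of the projections from the preceding lemmas, and a concluding sentence invoking Lemma \ref{lemma:diagonalaction} to transfer properness and cocompactness from $F$ to $Y = \bndry X \setminus \bndry F$.

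There is no real obstacle at this stage — the hard work was done upstream. The main technical difficulty was bounding how the base points of orthogonal rays travel relative to their representatives based at $x_0$ (Lemmas \ref{lemma:bound1}--\ref{lemma:bound3}), which relied on the quasiconvexity result of Hruska--Ruane (Theorem \ref{theorem:HRqc}) and on compatibility of pointwise convergence of orthogonal rays with convergence in $\bndry X$ (Lemma \ref{lemma: convergence at infinity}). Those are the substantive inputs; the theorem itself is the clean corollary that packages them together via the abstract diagonal-action principle.
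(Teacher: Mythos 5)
Your proposal is correct and follows exactly the route the paper takes: the paper's proof of this theorem is precisely the one-line observation that the preceding results ($G$-invariance of $\R$, surjectivity and properness of $pr_F$ and $pr_Y$, and the proper cocompact action of $G$ on $F$ from \cite{HK1}) allow one to apply Lemma \ref{lemma:diagonalaction} and transfer properness and cocompactness from $F$ to $\bndry X\setminus\bndry F$. Nothing is missing, and your identification of where the real work lies (the properness lemmas built on quasiconvexity and convergence of orthogonal rays) matches the structure of the section.
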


 As mentioned above in Lemma 3.1.2 of \cite{HK1} Hruska and Kleiner showed that $G=\stab_{\Gamma}(F)$ acts cocompactly on $F$. The Beiberbach theorem then gives that $G$ contains a subgroup of finite index $H$ isomorphic to $\bbz^n$, where $n$ is the rank of the flat $F$.

We then obtain the following corollary:

\begin{corollary}
\label{corollary:Hpropercocmpct}
The subgroup $H$ acts properly and cocompactly on $\bndry X\backslash\bndry F$.
\end{corollary}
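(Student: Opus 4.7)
The plan is to deduce the corollary directly from Theorem~\ref{theorem: prprccpt} together with the standard fact that a finite-index subgroup of a group acting properly and cocompactly on a space inherits both properties. Since $H$ has finite index in $G=\stab_{\Gamma}(F)$ by the Bieberbach theorem (as recalled in the paragraph preceding the corollary), everything should reduce to a short index-bookkeeping argument.

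First I would verify properness. The ambient group $\Gamma$ is discrete (it acts geometrically on $X$), so $G$ is a discrete subgroup of $\isom(X)$ and its induced action on $Y=\bndry X\setminus\bndry F$ is by homeomorphisms. By Theorem~\ref{theorem: prprccpt}, for every compact $K\subset Y$ the set $\{g\in G : gK\cap K\ne\emptyset\}$ is finite. Intersecting this finite set with $H\le G$ still yields a finite set, so $H$ acts properly on $Y$.

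Next I would handle cocompactness. Let $\{g_1,\dots,g_k\}\subset G$ be a complete set of coset representatives for $H\backslash G$; this is finite because $[G:H]<\infty$. Theorem~\ref{theorem: prprccpt} provides a compact set $K\subset Y$ with $G\cdot K=Y$. Set $K' = g_1 K\cup\cdots\cup g_k K$, which is compact as a finite union of compact sets. For any $y\in Y$ there exists $g\in G$ with $g y\in K$. Writing $g=h g_i^{-1}$ for some $h\in H$ and some $i$, we obtain $hy \in g_i K\subset K'$, and hence $H\cdot K' = Y$. Thus $H$ acts cocompactly on $Y$.

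There is essentially no obstacle here; this is a routine passage from a group action to the restriction of that action to a finite-index subgroup, and the only thing one must be slightly careful about is that $G$ (and hence $H$) acts by homeomorphisms on $Y$, which is clear because $\Gamma$ acts on $\overline{X}$ by homeomorphisms preserving $\bndry F$ when restricted to the stabilizer of $F$. Together this gives the corollary.
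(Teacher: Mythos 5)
Your proposal is correct and takes the same route as the paper, which deduces the corollary directly from Theorem \ref{theorem: prprccpt} together with the finite index of $H$ in $G=\stab_{\Gamma}(F)$ provided by the Bieberbach theorem, giving no separate argument. One bookkeeping correction: with $\{g_1,\dots,g_k\}$ representing the right cosets (so $G=\bigcup_i Hg_i$, hence $G=\bigcup_i g_i^{-1}H$ after inverting), you should write $g=g_i^{-1}h$ rather than $g=hg_i^{-1}$, so that $gy\in K$ immediately gives $hy\in g_iK\subset K'$; as written, $hg_i^{-1}y\in K$ only yields $y\in g_ih^{-1}K$, which has the desired form only if one additionally invokes normality of $H$ in $G$.
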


\section{Additional Properties of $\bndry X\setminus \bndry F$}
\label{sec:addprop}

Throughout this section we will assume that $\bndry X$ is locally connected. Let $\C$ be the collection of connected components of $Y=\bndry X\backslash \bndry F$. The goal of this section is to show that $\C$ has finitely many orbits and that the stabilizer of each $C\in\C$ has a finite index subgroup isomorphic to $\bbz^n$, where $n$ is the dimension of the flat. This fact will play a crucial role in Sections \ref{sec:GeometricAction} and \ref{sec:inbndryF}.

Let $Z$ be a closed convex subset of a metric space $M$, and let $G$ be any subgroup of $\isom(M)$. We say $Z$ is $G$-{\it periodic} if $\stab_{G}(Z)$ acts cocompactly on $Z$. As in the previous section let $H\leq \stab_{\Gamma}(F)$ be a finite index subgroup isomorphic to $\bbz^n$, where $n$ is the dimension of $F$. We begin with two results concerning the $H$-periodicity of elements of $\C$ that will be needed to prove the main result of this section. 
\subsection{H-periodicity}
\label{subsec: Hperiodic}

\medskip
\medskip

\begin{lemma}
\label{lemma: locally finite}
The collection $\C$ is locally finite, i.e only finitely many $C\in\C$ intersect any compact set $K\subset Y.$
\end{lemma}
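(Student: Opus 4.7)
The plan is to deduce local finiteness from local connectedness of $\bndry X$ by a standard point-set topology argument. Since $\bndry F$ is the visual boundary of a flat $F$ of dimension $n$, it is homeomorphic to $S^{n-1}$ and in particular closed in $\bndry X$. Hence $Y = \bndry X \setminus \bndry F$ is an \emph{open} subset of the locally connected space $\bndry X$, and open subsets of locally connected spaces are locally connected. Consequently $Y$ is itself locally connected.

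In a locally connected space, connected components are open. So each $C \in \C$ is open in $Y$, and the subfamily $\C_K = \set{C\in\C}{C\cap K \neq \emptyset}$ forms an open cover of the compact set $K$ by pairwise disjoint open subsets of $Y$. By compactness, $K$ is covered by finitely many elements of $\C_K$. But because the elements of $\C_K$ are pairwise disjoint and each one genuinely meets $K$, any proper subfamily fails to cover $K$. Therefore $\C_K$ must already be finite, giving the desired conclusion.

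There is no serious obstacle: the only subtlety is making sure the standing assumption that $\bndry X$ is locally connected is invoked (as stated at the start of Section \ref{sec:addprop}), and noting that $\bndry F$ is closed in $\bndry X$ so that $Y$ is open. Everything else is a direct consequence of the disjoint-open-cover trick. I would keep the written proof to just a few lines, spelling out these two observations and applying compactness.
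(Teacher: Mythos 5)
Your proof is correct and rests on exactly the same ingredients as the paper's: $Y$ is open in the locally connected space $\bndry X$, hence locally connected, and compactness of $K$ finishes the job. The paper phrases it as a contradiction argument with a convergent sequence of points from distinct components, while you argue directly via openness of components and the disjoint-open-cover trick, but this is only a cosmetic difference.
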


\begin{proof} This simply follows from the local connectedness of $Y$. Assume that $\C$ is not locally finite. Then there exists $K\subset Y$ such that $K$ meets infinitely many elements of $\C$. We may then find a sequence $(x_C)_{C\in\C}$ of points from distinct elements of $\C$ which meet $K$. This sequence must converge to a point $x$ in $K$. Thus any neighborhood of $x$ meets infinitely many members of $\C$. $Y$ is an open subset of a locally connected space and thus must be locally connected, a contradiction. \end{proof}

\medskip

\begin{lemma}
\label{lemma:Hperiodic}
Let $\C$ be as above. Then we have the following:
\begin{enumerate}[(i)]
\item The elements of $\mathcal{C}$ lie in only finitely many $H$-orbits.
\item Each $C\in \mathcal{C}$ is $H$-periodic, i.e $\stab_H(C)$ acts cocompactly on $C$.
\end{enumerate}

\end{lemma}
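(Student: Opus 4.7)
My plan is to combine the cocompactness of the $H$-action on $Y=\bndry X\setminus\bndry F$ from Corollary~\ref{corollary:Hpropercocmpct} with the local finiteness of $\C$ from Lemma~\ref{lemma: locally finite}. I would begin by fixing a compact set $K\subset Y$ with $H\cdot K=Y$, and enumerating as $C_1,\dots,C_m$ the finitely many components of $Y$ that meet $K$.

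For part (i), the argument is immediate: given any $C\in\C$ and any point $x\in C$, cocompactness yields $h\in H$ with $h^{-1}x\in K$, so $h^{-1}C$ is a component of $Y$ meeting $K$, and therefore $h^{-1}C=C_j$ for some $j$. Hence every $C\in\C$ lies in one of the $m$ orbits $H\cdot C_j$.

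For part (ii), the idea is to produce an explicit compact fundamental set for $\stab_H(C)$ acting on $C$. Fix $C\in\C$ and let $J=\{\,j : C\in H\cdot C_j\,\}$. For each $j\in J$ choose $g_j\in H$ with $g_jC_j=C$; then $\{\,h\in H : hC_j=C\,\}=\stab_H(C)\,g_j$. The key observation is that whenever $x\in C$ is written as $x=hy$ with $y\in K$ and $h\in H$, the set $h^{-1}C$ is a component of $Y$ meeting $K$, hence equals $C_j$ for some $j\in J$, and so $y\in K\cap C_j$ and $h\in\stab_H(C)\,g_j$. Running over all such $x$ gives
\[
C=\bigcup_{j\in J}\stab_H(C)\cdot g_j(K\cap C_j).
\]
Since each $K\cap C_j$ is closed in the compact set $K$ and therefore compact, the set $K':=\bigcup_{j\in J}g_j(K\cap C_j)$ is compact, and $\stab_H(C)\cdot K'=C$ furnishes the desired cocompactness.

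I do not anticipate any real obstacle: once the finite list $C_1,\dots,C_m$ is in hand, both claims reduce to bookkeeping with cosets of $\stab_H(C)$ inside $H$. The only subtlety worth flagging is that each $C$ is closed in $Y$ (being a connected component), which is what makes $K\cap C_j$ closed in $K$ and hence compact; local connectedness of $\bndry X$ enters only indirectly through Lemma~\ref{lemma: locally finite}.
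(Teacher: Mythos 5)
Your proof is correct and follows essentially the same route as the paper: the paper simply invokes the argument of Lemma 3.1.2 of Hruska--Kleiner (local finiteness plus cocompactness of the $H$-action, then coset bookkeeping with $\stab_H(C)$), which is precisely what you have written out in detail.
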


\medskip
\begin{proof} From Lemma \ref{lemma: locally finite} we know that $\C$ is locally finite, and we saw in Corollary \ref{corollary:Hpropercocmpct} that $H$ acts properly and cocompactly on on $Y$. We may now follow word for word the proof of Lemma 3.1.2 of \cite{HK1}.\end{proof}

\subsection{Full Rank Components}
\label{subsec:FullRank}

\medskip
\medskip
\begin{lemma}
\label{lemma:convcorrespondence}
 Assume we have a sequence of rays in $(r_i)\subset\perp(F)$ with base points, $r_i(0)$, converging to a point $\xi$ in $\bndry F$, then the sequence $\big(r_i(\infty)\big)$ converges to $\xi$ in $\bndry F$.
\end{lemma}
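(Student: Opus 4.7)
The plan is to combine Lemma~\ref{lemma:bound1} with the $\CAT(0)$ convexity of the distance between two geodesics from a common point. Let $\eta\colon[0,\infty)\to F$ be the ray based at $x_0$ representing $\xi$, let $c_i\colon[0,\infty)\to X$ be the ray based at $x_0$ asymptotic to $r_i$ (so $c_i(\infty)=r_i(\infty)$), set $t_i=d(x_0,r_i(0))$, and let $\alpha_i\colon[0,t_i]\to F$ be the $F$-geodesic from $x_0$ to $r_i(0)$. Since the sets $V(\eta,n,\epsilon)$ form a neighborhood basis of $\xi$ in the cone topology, it suffices to show that for each fixed $n>0$ and $\epsilon>0$ we have $d(c_i(n),\eta(n))<\epsilon$ for all sufficiently large $i$.

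First I would unpack the hypothesis. Because $F$ is convex and contains $x_0$, the geodesic $\alpha_i$ lies inside $F$, and the $\CAT(0)$ projection $\pi_n(r_i(0))$ equals $\alpha_i(n)$ whenever $t_i>n$. The assumption $r_i(0)\to\xi$ in the cone topology therefore forces $t_i\to\infty$ and $d(\alpha_i(n),\eta(n))\to 0$ for every fixed $n$. Next, Lemma~\ref{lemma:bound1} produces a constant $M$, independent of $i$, such that $d(c_i(t_i),r_i(0))=d(c_i(t_i),\alpha_i(t_i))\leq M$.

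To transfer this endpoint estimate back to time $n$, I would apply the $\CAT(0)$ convexity of the distance function between two geodesics emanating from a common point (\cite{BH1}, Proposition~II.2.2): the function $s\mapsto d(c_i(s),\alpha_i(s))$ is convex on $[0,t_i]$ and vanishes at $s=0$, so it lies below its linear interpolation, giving
\[
  d\bigl(c_i(n),\alpha_i(n)\bigr)\leq \frac{n}{t_i}\,d\bigl(c_i(t_i),\alpha_i(t_i)\bigr)\leq \frac{nM}{t_i}
\]
whenever $t_i\geq n$. The triangle inequality then yields $d(c_i(n),\eta(n))\leq nM/t_i+d(\alpha_i(n),\eta(n))$, and both summands tend to $0$ as $i\to\infty$. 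I do not anticipate any real obstacle here: every ingredient is either immediate from convexity of $F$, supplied by a prior lemma, or a standard $\CAT(0)$ fact, and the key observation is simply that the uniform error $M$ between $c_i$ and $\alpha_i$ at the large time $t_i$ scales linearly down to $nM/t_i$ at the fixed time $n$.
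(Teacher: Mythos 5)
Your argument is correct: the identification $\pi_n(r_i(0))=\alpha_i(n)$, the deduction $t_i\to\infty$ and $d(\alpha_i(n),\eta(n))\to 0$ from convergence in the cone topology, the uniform endpoint bound from Lemma~\ref{lemma:bound1}, and the convexity interpolation $d(c_i(n),\alpha_i(n))\le nM/t_i$ all hold, and together they give $c_i(n)\to\eta(n)$ for each fixed $n$, which is exactly convergence of $r_i(\infty)$ to $\xi$. The route differs from the paper's, though both ultimately rest on the quasiconvexity of $F\cup\mathrm{im}(r_i)$ (Lemma~\ref{corollary:quasiconvex}): the paper argues in one line that the based representatives $c_i$ stay in the $L$-neighborhood of $F\cup\mathrm{im}(r_i)$, so $r_i(0)\in U(\xi,n,\epsilon)$ forces $c_i$ into $U(\xi,n,\epsilon+L)$, and then uses that for the fixed fudge $\epsilon+L$ these sets still form a neighborhood basis as $n\to\infty$. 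You instead funnel the quasiconvexity through Lemma~\ref{lemma:bound1} to get a single endpoint estimate at time $t_i$ and then use $\CAT(0)$ convexity of $s\mapsto d(c_i(s),\alpha_i(s))$ (with common origin $x_0$) to scale that error down to $nM/t_i$ at the fixed time $n$. Your version buys a sharper, genuinely vanishing error at each fixed time and makes explicit a step the paper leaves terse (the passage from ``$c_i$ is $L$-close to $F\cup\mathrm{im}(r_i)$'' to membership in a specific cone neighborhood), at the cost of invoking one extra standard ingredient; the paper's version is shorter and avoids the convexity bookkeeping by tolerating the additive constant $L$.
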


\begin{proof} By Corollary \ref{corollary:quasiconvex} each point $r_i(\infty)$ is represented by based rays $c_i$ that stay within the $L$-neighborhood of  $F\cup$im$(r_i)$. Thus if $r_i(0)\rightarrow\xi\in\bndry F$, then for any $n\in \bbn$ all but finitely many members of the sequence $\big(r_i(0)\big)$ are inside $U(\xi, n,\epsilon)$, which implies that for any $n$ all but finitely many $c_i$ lie in $U(\xi, n,\epsilon + L)$.\end{proof}

\begin{corollary}
\label{corollary:limitsetcorollary}
Every point in $\bndry F$ is a limit point of points in $\bndry X\backslash \bndry F$.
\end{corollary}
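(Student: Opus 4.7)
The idea is, for a given $\xi\in\bndry F$, to walk out along $F$ toward $\xi$, use the surjectivity of $pr_F$ to attach an orthogonal ray at each stage, and then invoke Lemma \ref{lemma:convcorrespondence} to transport the convergence to the boundary.

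Concretely, let $\gamma\colon[0,\infty)\to F$ be the unit-speed geodesic ray based at $x_0$ representing $\xi$. For each $n\in\bbn$, Corollary \ref{corollary: R surjectivity} applied to $\gamma(n)\in F$ yields an orthogonal ray $r_n\in\perp(F)$ with $d\big(\gamma(n),r_n(0)\big)\leq A$; moreover, since the rays produced by that corollary arise as $G$-translates of a ray based on $F$ and $G$ preserves $F$, each basepoint $r_n(0)$ lies in $F$. The next step is to verify that $r_n(0)\to\xi$ in the cone topology on $\bndry X$. Since $d\big(x_0,\gamma(n)\big)=n\to\infty$ while the transverse displacement $d\big(\gamma(n),r_n(0)\big)\leq A$ is uniformly bounded, a standard $\CAT(0)$ comparison forces $\angle_{x_0}\big(\gamma(n),r_n(0)\big)\to 0$, and hence $\pi_t\big(r_n(0)\big)\to\gamma(t)$ for every fixed $t>0$. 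Applying Lemma \ref{lemma:convcorrespondence} then gives $r_n(\infty)\to\xi$ in $\bndry X$.

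It remains to show that $r_n(\infty)\notin\bndry F$ for each $n$, so that the sequence $\big(r_n(\infty)\big)$ genuinely witnesses $\xi$ as a limit point of $\bndry X\setminus\bndry F$. Suppose instead that $r_n(\infty)\in\bndry F$; since $r_n(0)\in F$ and $F$ is convex, there is a geodesic ray in $F$ issuing from $r_n(0)$ toward $r_n(\infty)$. The uniqueness of $\CAT(0)$ asymptotic rays from a fixed basepoint forces this ray to coincide with $r_n$, so $r_n\subset F$. But then choosing $y=r_n(t)\in F$ in the definition of orthogonality gives $\angle_{r_n(0)}\big(r_n(t),y\big)=0$, contradicting $r_n\in\perp(F)$.

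The only real technical step is the cone-topology convergence $r_n(0)\to\xi$, which requires upgrading the bounded transverse displacement $A$ to a vanishing angular deviation at $x_0$ via $\CAT(0)$ comparison; everything else reduces cleanly to Corollary \ref{corollary: R surjectivity}, Lemma \ref{lemma:convcorrespondence}, and the $\CAT(0)$ uniqueness of asymptotic geodesic rays.
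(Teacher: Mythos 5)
Your proof is correct and follows essentially the same route as the paper: take the based ray in $F$ representing $\xi$, use Corollary \ref{corollary: R surjectivity} to pick orthogonal rays based within $A$ of points along it, note the basepoints converge to $\xi$, and apply Lemma \ref{lemma:convcorrespondence}. You simply supply two details the paper leaves implicit — the comparison argument for cone-topology convergence of the basepoints and the check that $r_n(\infty)\notin\bndry F$ — both of which are fine.
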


\begin{proof} Let $\xi$ be in the boundary of a $F$ and $c\colon [0,\infty)\to X$ a based ray representing $\xi$. Then by Corollary \ref{corollary: R surjectivity} for each $c(n)$ there is an $r_n\in\perp (F)$ such that $d\big(c(n), r_n(0)\big)<A$. So, the sequence $r_n(0)$ converges to $\xi$ and we may apply the preceding lemma. \end{proof}

\medskip
The proof of the following lemma essentially amounts to checking Bestvina's nullity condition \cite{Bes96} for the action of $H$ on $\bndry X\setminus\bndry F$.

\begin{lemma}
\label{lemma:components are asymptotic}
If $C\in\C$ then elements of $\orb_H(C)$ are asymptotic in the sense that two components meet in $\Lambda\big(\stab_H(C)\big)\subset \bndry F$.
\end{lemma}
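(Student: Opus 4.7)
The plan is to show that for any component $C\in\C$, every point of $\overline{C}\cap\bndry F$ lies in $\Lambda\big(\stab_H(C)\big)$. The lemma then follows easily: distinct components $C_1,C_2\in\orb_H(C)$ are closed in the open set $Y$, so $\overline{C_1}\cap\overline{C_2}\cap Y=\emptyset$ and the intersection is forced into $\bndry F$; and because $H\cong\bbz^n$ is abelian, $\stab_H(hC)=\stab_H(C)$ for every $h\in H$, so controlling one component at a time suffices.

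To handle a single component, fix $\xi\in\overline{C}\cap\bndry F$ and a sequence $(\xi_n)\subset C$ with $\xi_n\to\xi$. By Lemma \ref{lemma:Hperiodic}(ii), there is a compact set $K\subset C$ with $\stab_H(C)\cdot K=C$; write $\xi_n=k_n z_n$ with $k_n\in\stab_H(C)$ and $z_n\in K$, and pass to a subsequence with $z_n\to z\in K$. Using Corollary \ref{corollary: R surjectivity}, pick orthogonal rays $q_n\in\perp(F)$ with $q_n(\infty)=z_n$. Properness of $pr_Y$ on $\R$ together with properness of $pr_F$ (Lemma \ref{lemma:proper projection}) forces the base points $q_n(0)$ to lie in a bounded subset of $F$, so after a further subsequence Lemma \ref{lemma:diagonallemma} gives $q_n\to q\in\perp(F)$ with $q(\infty)=z$. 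The translated rays $k_n q_n$ are then orthogonal to $F$ with $(k_n q_n)(\infty)=\xi_n$.

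If the base points $k_n q_n(0)$ stayed bounded in $F$, a further application of Lemma \ref{lemma:diagonallemma} (together with upper semicontinuity of the Alexandrov angle to ensure orthogonality persists in the limit, and Lemma \ref{lemma: convergence at infinity}) would produce a limit ray in $\perp(F)$ asymptotic to $\xi$, forcing $\xi\in Y$ and contradicting $\xi\in\bndry F$. Hence $k_n q_n(0)\to\infty$ in $F$ and, after extraction, $k_n q_n(0)\to\eta\in\bndry F$. Lemma \ref{lemma:convcorrespondence} transfers this to $\xi_n=(k_n q_n)(\infty)\to\eta$, so $\eta=\xi$. Since $q_n(0)$ is bounded and $k_n$ acts by isometries, for any fixed $p\in F$ we have $d(k_n p,k_n q_n(0))=d(p,q_n(0))$ uniformly bounded, so $k_n p\to\xi$ in the cone topology of $F\cup\bndry F$. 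This exhibits $\xi$ as a limit point of the $\stab_H(C)$-orbit of $p$, i.e.\ $\xi\in\Lambda(\stab_H(C))$. In effect the argument verifies Bestvina's nullity condition \cite{Bes96} for the $H$-action on $Y$: escaping $\stab_H(C)$-translates of $K$ localize to single points of $\bndry F$, which necessarily lie in the limit set. The main subtlety is bookkeeping the interplay between convergence of base points in $F$ and convergence of points at infinity in $\bndry X$, which is handled by the "bound" lemmas \ref{lemma:bound1} and \ref{lemma:bound2} together with the diagonal convergence lemma.
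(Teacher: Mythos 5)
Your core single-component claim --- that every point of $\overline{C}\cap\bndry F$ lies in $\Lambda\big(\stab_H(C)\big)$ --- is correct, and your route to it is genuinely different from the paper's: you work inside one component, use cocompactness of $\stab_H(C)$ plus properness of the projections of $\R$ to control base points of orthogonal representatives, and conclude that escaping sequences track an orbit $k_np$ of $\stab_H(C)$ in $F$. (Two small steps are left implicit: that a limit ray in $\perp(F)$ cannot be asymptotic to a point of $\bndry F$, which is what forces the base points $k_nq_n(0)$ to leave every bounded set --- the paper also uses this silently when it treats $pr_Y$ as landing in $Y$ --- and the standard fact that sequences at bounded distance converge to the same boundary point. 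Both are fine, but deserve a line.)

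The genuine gap is in the deduction of the lemma from that claim. The lemma asserts that translates in $\orb_H(C)$ are \emph{asymptotic}: two components actually share accumulation points, and those common points lie in $\Lambda\big(\stab_H(C)\big)$; the paper's proof makes this explicit by taking a sequence in $C'$ converging to $\xi\in\bndry F$ and producing a sequence in $C$ converging to the \emph{same} $\xi$ (via the same group elements $h_n$ and boundedly separated base points, then Lemma \ref{lemma:convcorrespondence}). Your reduction only shows that \emph{if} two closures meet, the meeting is forced into $\bndry F$ and hence into $\Lambda\big(\stab_H(C)\big)$; you never show that $\overline{C_1}\cap\overline{C_2}\neq\emptyset$, nor even that $\overline{C}\cap\bndry F\neq\emptyset$, and this accumulation statement is exactly what Proposition \ref{proposition:fullrank} later extracts from the lemma (no translate can sit inside a neighborhood far from $\Lambda(H')$). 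The patch is short and uses machinery you already have: show the reverse inclusion $\Lambda\big(\stab_H(C)\big)\subseteq\overline{C'}$ for every $C'\in\orb_H(C)$. Given $\zeta\in\Lambda\big(\stab_H(C)\big)$, choose $h_n\in\stab_H(C)$ with $h_nx_0\to\zeta$; for $y\in C'$ with orthogonal representative $r$, the base points $h_nr(0)$ stay within bounded distance of $h_nx_0$, hence converge to $\zeta$, and Lemma \ref{lemma:convcorrespondence} gives $h_ny\to\zeta$ with $h_ny\in C'$ (since $\stab_H(C')=\stab_H(C)$). Then every translate accumulates on all of $\Lambda\big(\stab_H(C)\big)$, so any two translates do meet there, and combined with your inclusion this even identifies $\overline{C'}\cap\bndry F=\Lambda\big(\stab_H(C)\big)$, slightly sharpening the paper's statement.
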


\begin{proof} Let $C'\in \orb_H(C)$ and let $c'_n$ a sequence of points in $C'$ converging to a point of $\bndry F$. We show that there is a sequence of points in $C$ which converge to the same point of $\bndry F$.

Each $c'_n$ is a translate of some point some point $c_n$ in $C$. Notice that because $H$ is abelian $\stab_H(C')=\stab_H(C)$, and that by Lemma \ref{lemma:Hperiodic} there exists compact sets $K'$ and $K$ whose $\stab_h(C)$-translates cover $C'$ and $C$, respectively. So there exists a sequence of group elements $(h_n)$ in $\stab_H(C)$ such that $c'_n$ is contained in $h_nK'$ and $c_n\in h_nK$ for every $n$. 

Now, as in Section \ref{sec:ProperCocompact} consider the projection of $K$ and $K'$ to the the flat $F$ and choose two points $k'\in K'$ and $k\in K$. For every $n$ the point $h_nk'$ is within a bounded distance of the base of an orthogonal representative for $c'_n$. Similarly, each $h_nk$ is within a bounded distance of an orthogonal representative of $c_n$; moreover, the distance $d(k,k')$ is bounded. So, $(h_nk)$ and $(h_nk')$ converge to the same point $\xi$ in $\bndry F$, which implies that the bases of the orthogonal representatives of the $(c_n)$ and $(c'_n)$ also converge to $\xi$. We may apply Lemma \ref{lemma:convcorrespondence} to complete the proof. \end{proof}

\medskip
\begin{proposition}
\label{proposition:fullrank}
Let $C\in\C$, then $C$ is connected with stabilizer isomorphic to $\bbz^n$.
\end{proposition}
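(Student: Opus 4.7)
The plan is as follows. The assertion that $C$ is connected is immediate from its definition as a component of $Y=\bndry X\setminus\bndry F$, so the substantive claim is that the stabilizer $K:=\stab_H(C)$ is isomorphic to $\bbz^n$. Because $K$ is a subgroup of $H\cong\bbz^n$, it is free abelian of some rank $k\leq n$, so it suffices to show $k=n$.

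As a preliminary I extract from the proof of Lemma \ref{lemma:components are asymptotic} the inclusion $\overline{C}\cap\bndry F\subseteq\Lambda(K)$. Indeed, given a sequence $c_m\in C$ with $c_m\to\xi\in\bndry F$, the cocompactness of $K$ on $C$ (Lemma \ref{lemma:Hperiodic}(ii)) lets me write $c_m=\alpha_m d_m$ with $\alpha_m\in K$ and $d_m$ in a fixed compact fundamental domain; after passing to a subsequence with $d_m\to d\in C$, the projection and convergence results of Section \ref{sec:ProperCocompact} (in particular Lemmas \ref{lemma:bound1}, \ref{lemma: convergence at infinity}, and \ref{lemma:convcorrespondence}) imply that the $K$-translates $\alpha_m\cdot b$ of a fixed base point $b\in F$ converge to $\xi$ in $F$, so $\xi\in\Lambda(K)$. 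Because every element of $H$ fixes $\bndry F$ pointwise, the same inclusion $\overline{h^m C}\cap\bndry F\subseteq\Lambda(K)$ holds for every translate.

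Now suppose for contradiction that $k<n$. Then $K\otimes\bbr$ is a proper linear subspace of $H\otimes\bbr\cong\bbr^n$, so I can pick a primitive element $h\in H$ lying outside $K\otimes\bbr$; no nonzero power of $h$ belongs to $K$, hence $\{h^m C\}_{m\in\bbz}$ consists of pairwise distinct components, and the endpoint $\xi_h\in\bndry F$ of the $h$-axis satisfies $\xi_h\notin\Lambda(K)$. A standard dynamics computation (again through the projection framework of Section \ref{sec:ProperCocompact}) shows $h^m c\to\xi_h$ in $\bndry X$ as $m\to\infty$ for any $c\in C$. Because $\bndry X$ is locally connected and $\Lambda(K)$ is closed with $\xi_h\notin\Lambda(K)$, I may choose a connected open neighborhood $U$ of $\xi_h$ in $\bndry X$ satisfying $\overline{U}\cap\Lambda(K)=\emptyset$.

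For $m$ large enough that $h^m c\in U$, let $W_m$ be the connected component of $U\cap Y$ containing $h^m c$; then $W_m$ is nonempty, contained in $h^m C$, and open in $U$. The crucial step is to check that $W_m$ is also closed in $U$: one has $\overline{W_m}\cap\bndry F\subseteq\overline{h^m C}\cap\bndry F\subseteq\Lambda(K)$, and the choice of $U$ then forces $\overline{W_m}\cap U\subseteq U\cap Y$; since $W_m$ is closed in $U\cap Y$ as a component, this yields $\overline{W_m}\cap U=W_m$. Thus $W_m$ is a nonempty proper clopen subset of the connected set $U$---proper because $\xi_h\in U$ lies in $\bndry F$ and hence not in $W_m\subseteq Y$---which is absurd. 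This forces $k=n$, proving $K\cong\bbz^n$. The principal obstacle is the closure computation for $W_m$, which depends crucially on the preliminary inclusion $\overline{C}\cap\bndry F\subseteq\Lambda(K)$ and on selecting the neighborhood $U$ so that its closure avoids $\Lambda(K)$.
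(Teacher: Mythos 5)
Your proposal is correct and follows essentially the same route as the paper: assume the stabilizer has rank $k<n$, choose $h\in H$ whose axis endpoint $\xi$ in $\bndry F$ avoids $\Lambda(\stab_H(C))$, use orthogonal representatives to see that the distinct translates $h^mC$ accumulate at $\xi$, and use the asymptoticity of translates (the inclusion $\overline{C}\cap\bndry F\subseteq\Lambda(\stab_H(C))$ from Lemma \ref{lemma:components are asymptotic}) to contradict local connectedness of $\bndry X$ at $\xi$. The only difference is cosmetic: the paper concludes ``infinitely many translates meet $U$ but none is contained in $U$,'' while you make the same contradiction explicit by producing a proper nonempty clopen subset of a connected neighborhood.
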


\begin{proof}By way of contradiction suppose $C\in\C$ and assume that $H'=\stab_H(C)\cong \bbz^k$ for some $k<n$. As $k<n$ we may find an $h\in H\setminus H'$ and an axis $\ell\colon\bbr\to F$ passing through $x_0\in F$ for $h$ with $\ell(+\infty)$ and $\ell(-\infty)$ not in the limit set $\Lambda(H')$. Let $\xi$ be the point of $\bndry F$ represented by the ray $\ell|[0,\infty)$. As $\Lambda(H')$ is a closed subsphere of $\bndry F$, we may find a $U =U(\xi,n,\epsilon)$ neighborhood of $\xi$  in $\bndry X\setminus\Lambda(H')$.

 Fix $\eta \in C$ and let $r$ be an orthogonal representative of $\eta$. Then $h^n(r)$ is an orthogonal ray for every $n$ and the sequence $h^n\big(r(0)\big)$ converges to $\xi$, which by  Lemma \ref{lemma:convcorrespondence} implies $h^n(\eta)\rightarrow\xi$. As each $h^n(\eta)$ lies in a different element of $\orb_{\langle h\rangle}(C)$ we have that infinitely members of $\orb_{\langle h\rangle}(C)$ intersect $U$. As $\langle h\rangle$ stabilizes $\bndry F$ and the orbit under $H'$ of points in $C$ converges to points in $\Lambda(H')$, Lemma \ref{lemma:components are asymptotic} implies that no element of $\orb_{\langle h\rangle}(C)$ is contained in $U$. Thus $\bndry X$ is not locally connected, a contradiction.\end{proof}

\medskip
Combining this result with the $H$-periodicity result Lemma \ref{lemma:Hperiodic} we obtain:

\begin{corollary}
\label{corollary:finitelymany}
There are only finitely many components of $\bndry X\setminus \bndry F$.
\end{corollary}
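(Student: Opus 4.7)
The plan is to combine the two ingredients that immediately precede the corollary. From Lemma \ref{lemma:Hperiodic}(i) we already know that $H$ acts on $\C$ with only \emph{finitely many orbits}, so it suffices to show that each $H$-orbit $\orb_H(C)$ is itself finite. By the orbit--stabilizer correspondence this reduces to showing that $\stab_H(C)$ has finite index in $H$.

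First I would recall that $H\cong\bbz^n$, where $n$ is the rank of the flat $F$. Proposition \ref{proposition:fullrank} tells us that $\stab_H(C)$ is also isomorphic to $\bbz^n$; that is, it has the same rank as the ambient group $H$. A subgroup of $\bbz^n$ of full rank is automatically of finite index (its quotient is a finitely generated abelian group of rank $0$, hence finite). Thus $\bigl|\orb_H(C)\bigr| = [H:\stab_H(C)] < \infty$ for every $C\in\C$.

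Finally, combining the two finiteness statements, $\C$ is a finite union of finite $H$-orbits, and so $\C$ is finite. I do not anticipate any genuine obstacle here: the proof is a short book-keeping argument that packages together the full-rank statement of Proposition \ref{proposition:fullrank} with the finiteness of orbits in Lemma \ref{lemma:Hperiodic}(i).
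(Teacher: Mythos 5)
Your proof is correct and follows essentially the same route the paper intends: it combines Lemma \ref{lemma:Hperiodic}(i) (finitely many $H$-orbits in $\C$) with Proposition \ref{proposition:fullrank} (each $\stab_H(C)$ has full rank, hence finite index in $H\cong\bbz^n$), so each orbit is finite and $\C$ is finite. The paper gives no further detail beyond ``combining'' these two results, and your orbit--stabilizer bookkeeping is exactly the intended argument.
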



\section{An Equivariant Metric on $\bndry X\backslash \bndry F$}
\label{sec:GeometricAction}

In this section we assume that $Y=\bndry X\setminus \bndry F$ is locally connected. Let $H$ be the maximal free abelian subgroup of $\stab_{\Gamma}(F)$. We will put an $H$-equivariant metric on $Y$. First, we remind the reader of a standard result about covering spaces that will be used several times throughout this section:

\begin{lemma}
Let $G$ be a torsion free group acting properly on a locally compact Hausdorff space $X$, then $X$ together with the quotient map $q\colon X\to X/G$ form a normal covering space of $X/G$.
\end{lemma}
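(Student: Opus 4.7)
The plan is to show that the hypotheses force the $G$-action to be a covering space action in the classical sense: every point has an open neighborhood whose $G$-translates are pairwise disjoint. Once that is established, the quotient map is a normal cover because $G$ itself already acts transitively on fibers.

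First I would argue that the action is \emph{free}. If some $g \neq e$ fixed a point $x$, then $\langle g \rangle$ would stabilize the compact set $\{x\}$, so properness would force $\langle g \rangle$ to be finite, contradicting torsion-freeness. Next, given $x \in X$, I would use local compactness and Hausdorffness to pick a compact neighborhood $K$ of $x$. Properness gives a finite set $\{e,g_1,\dots,g_n\}$ of elements $g \in G$ with $gK \cap K \neq \emptyset$. For each $g_i \neq e$, freeness gives $g_i x \neq x$, and Hausdorffness gives disjoint opens $U_i \ni x$ and $V_i \ni g_i x$. Setting $W_i = U_i \cap g_i^{-1}V_i$ produces an open neighborhood of $x$ with $g_i W_i \cap W_i = \emptyset$. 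Shrinking to $U := \mathrm{int}(K) \cap W_1 \cap \cdots \cap W_n$ yields an open neighborhood of $x$ contained in $K$ with $gU \cap U = \emptyset$ for every $g \neq e$ (elements outside $\{e,g_1,\dots,g_n\}$ are killed by the $K$-condition, the remaining $g_i$ by the $W_i$-condition).

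From here the covering space structure is routine. The quotient map $q\colon X \to X/G$ is open (for any open $V \subset X$, $q^{-1}(q(V)) = \bigcup_{g \in G} gV$ is open, hence $q(V)$ is open), and for $U$ as above, $q^{-1}(q(U)) = \bigsqcup_{g \in G} gU$ is a disjoint union because the action is free on $U$. Each $gU$ maps homeomorphically onto $q(U)$ via $q$, so $q(U)$ is an evenly covered neighborhood of $q(x)$. Hence $q$ is a covering map. Finally, $G$ sits inside the group of deck transformations and by construction acts transitively on each fiber $q^{-1}(\bar x) = Gx$, so the covering is normal (regular).

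The main subtlety is step two, the separation argument producing $U$; this is where local compactness and Hausdorffness both get used, and without them properness alone would not suffice to promote freeness to the stronger ``wandering neighborhood'' property needed for the covering space characterization. Everything afterwards is a direct unpacking of the definition of a normal covering.
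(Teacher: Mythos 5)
Your proof is correct. The paper itself offers no argument for this lemma, citing it only as a standard fact about covering spaces, and your write-up is exactly the standard argument it implicitly invokes: properness plus torsion-freeness gives a free action, local compactness and Hausdorffness upgrade freeness to the wandering-neighborhood property ($gU\cap U=\emptyset$ for all $g\neq e$), and then openness of $q$ together with transitivity of $G$ on fibers yields an evenly covered neighborhood and the normality of the cover. The only point worth making explicit is that ``acting properly'' is being read in the properly discontinuous sense (for each compact $K$ the set of $g$ with $gK\cap K\neq\emptyset$ is finite), which is the intended meaning in this setting and is what both your freeness step and your finiteness step use.
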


\medskip
We begin with a review of how one defines the {\it pull-back length metric} of a length space. We refer the reader to \cite{P} for a more detailed account. Recall that a length metric is one where the distance between two points is given by taking the infimum of the lengths of all rectifiable curves between $x$ and $y$. Suppose that $X$ is a length space and $\XT$ is topological space, and $p\colon\XT \to X$ is a surjective local homeomorphism. Define a pseudometric on $\XT$ by:
\[
\tilde{d}(\xt,\yt)=\inf\bigset{L(p\circ\tilde{\gamma})}{ \tilde{\gamma}\colon [0,1]\to\XT\text{  a curve from}\spc \tilde{x}\spc \text{to}\spc \tilde{y}}
\]

Where $L(p\circ\tilde{\gamma})$ is the length of the path $p\circ\tilde{\gamma}$. If $\XT$ is Hausdorff then $\tilde{d}$ is a length metric (see \cite{P} Proposition 3.4.7). Also, it is easy to show that:

\begin{lemma}
If $X$ is obtained as the quotient of a free and proper action by a group $G$ then the metric $\tilde{d}$ is $G$-equivariant.
\end{lemma}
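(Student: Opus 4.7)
The plan is to exploit the single tautological identity $p \circ g = p$, valid for every $g \in G$ because $p\colon \tilde{X} \to \tilde{X}/G = X$ is the quotient map. The free and proper action of $G$ on the locally compact Hausdorff space $\tilde{X}$, together with the lemma cited just above, makes $p$ a normal covering map; in particular each $g \in G$ acts as a homeomorphism of $\tilde{X}$, so left-translation by $g$ carries continuous paths to continuous paths.

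First I would fix $g \in G$ and $\tilde{x},\tilde{y}\in\tilde{X}$, and observe that the correspondence $\tilde{\gamma}\mapsto g\cdot\tilde{\gamma}$, where $(g\cdot\tilde{\gamma})(t)=g\cdot\tilde{\gamma}(t)$, is a bijection between continuous paths in $\tilde{X}$ from $\tilde{x}$ to $\tilde{y}$ and continuous paths from $g\tilde{x}$ to $g\tilde{y}$; its inverse is translation by $g^{-1}$. Next I would apply the identity $p\circ g = p$ to conclude
\[
p\circ (g\cdot\tilde{\gamma}) \;=\; p\circ\tilde{\gamma}
\]
as maps $[0,1]\to X$. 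In particular these two paths in $X$ are identical, hence have the same length: $L\bigl(p\circ(g\cdot\tilde{\gamma})\bigr)=L(p\circ\tilde{\gamma})$.

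Taking the infimum over all admissible paths and applying the definition of $\tilde{d}$ then gives
\[
\tilde{d}(g\tilde{x},g\tilde{y}) \;=\; \inf_{\tilde{\gamma}} L\bigl(p\circ(g\cdot\tilde{\gamma})\bigr) \;=\; \inf_{\tilde{\gamma}} L(p\circ\tilde{\gamma}) \;=\; \tilde{d}(\tilde{x},\tilde{y}),
\]
which is the required $G$-equivariance. There is essentially no obstacle here: the whole argument collapses to the identity $p\circ g = p$ together with the fact that $g$ acts as a homeomorphism on $\tilde{X}$. The only point to handle carefully is that bijectivity of translation by $g$ on the space of paths is what upgrades the obvious inequality $\tilde{d}(g\tilde{x},g\tilde{y})\le \tilde{d}(\tilde{x},\tilde{y})$ to an equality; alternatively one may simply apply the inequality twice, once to $g$ and once to $g^{-1}$.
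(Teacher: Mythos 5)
Your proof is correct and follows essentially the same route as the paper: the paper likewise observes that translation by $g$ matches up the path sets between $\xt,\yt$ and $g\xt,g\yt$, and that $p\circ\gt\tilde{\gamma}=p\circ\tilde{\gamma}$ since these paths are identified in the quotient, so the two infima of lengths coincide. Your remark about upgrading the inequality to an equality via $g^{-1}$ is a fine (if slightly more explicit) way of phrasing the same bijection argument.
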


\begin{proof} Let $P(\xt,\yt)$ be the set of all paths between $\xt$ and $\yt\in \XT$ and $Q(x',y')=\bigset{p\circ\sigma}{\sigma\in P(x',y')}$. To prove that $\tilde{d}(\xt,\yt)=\tilde{d}(g\xt,g\xt)$ it suffices to show that $Q(\xt,\yt)=Q(g\xt,g\yt)$. But this is clear, as $X$ is obtained as the quotient of the group action, i.e. if $\gamma$ is a path in $\XT$, then  $\gamma$ and $g\gamma$ are identified.\end{proof}

\medskip
\medskip
Let $G$ be a torsion free group, acting, properly and cocompactly on a connected component $C$ of $\bndry X\backslash\bndry F$, set $Q=C/G$, and define $q\colon C\to C/G$ to be the associated quotient map. In order to apply the above construction to our setting we need that $Q$ is a length space. I would like to thank Ric Ancel for pointing out the following theorem due to R.H. Bing (see \cite{Bing1}), which we will use to show that $Q$ is a length space:

\begin{theorem}[Bing]
\label{theorem:Bing's Theorem}
Every Peano continuum admits a convex metric.
\end{theorem}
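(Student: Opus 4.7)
The plan is to build on the Peano continuum $(X,d)$ a new metric $\rho$ that is topologically equivalent to $d$ and Menger convex (i.e.\ every pair $x,y$ admits a midpoint $z$ with $\rho(x,z) = \rho(z,y) = \rho(x,y)/2$). I would follow the classical chain-refinement strategy of Bing and Moise rather than attempt a direct length-metric construction, since in a general Peano continuum (e.g.\ a self-similar fractal like the Sierpinski carpet) every arc between two distinct points may have infinite $d$-length, so the naive length metric collapses.

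The first step is combinatorial. Using compactness and local connectedness, produce a sequence of finite open covers $\{\mathcal{U}_n\}_{n \geq 1}$ of $X$ by connected sets with $\mathrm{mesh}(\mathcal{U}_n) \leq 2^{-n}$, arranged so that $\mathcal{U}_{n+1}$ star-refines $\mathcal{U}_n$. Property S of Peano continua further allows one to arrange uniform local connectedness: there exists $\delta_n > 0$ such that any two points at $d$-distance less than $\delta_n$ share a common member of $\mathcal{U}_n$. Define a chain pseudometric $\rho_n(x,y) = 2^{-n} \cdot k$, where $k$ is the minimum number of links in a chain in $\mathcal{U}_n$ joining $x$ to $y$, and set $\rho(x,y) = \sum_n \alpha_n \, \psi_n\bigl(\rho_n(x,y)\bigr)$ for suitably decaying weights $\alpha_n$ and bounded concave cutoffs $\psi_n$. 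Positivity, symmetry, and the triangle inequality are immediate from the chain definition; finiteness follows from the finiteness of each cover together with arc-connectivity of $X$; topological equivalence of $\rho$ with $d$ follows from the mesh bound in one direction and the uniform local connectedness in the other.

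The main obstacle is establishing convexity. Given $x,y$ with $\rho(x,y) = r$, take a near-minimal chain at a fine scale $n$, select $z_n$ at its combinatorial midpoint, and extract a convergent subsequence $z_n \to z$ by compactness of $X$. The star-refinement property should then force $\rho(x,z) = \rho(z,y) = r/2$ in the limit. Making this argument actually close up requires tuning the weights $\alpha_n$ (essentially geometric decay) so residual errors at each scale sum to zero, together with careful bookkeeping between chain lengths at different scales to ensure a near-bisection at scale $n$ is consistent with bisections at all finer scales. This is precisely where the Bing--Moise construction demands its most delicate work, and I would expect this midpoint-limit step to consume most of the proof.
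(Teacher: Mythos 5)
This statement is not proved in the paper at all: it is quoted verbatim as a classical theorem of R.~H.~Bing, with a citation to \cite{Bing1}, and the paper only \emph{uses} it (via the equivalence of Menger convexity with being geodesic for compact metric spaces) to conclude that the quotient $Q$ is a length space. So there is no in-paper argument to compare yours against; what you are attempting is a re-proof of the Bing--Moise theorem itself, which is a substantial piece of continuum theory.

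As a proof, your proposal has a genuine gap exactly where you flag it, and the flag does not repair it. The weighted sum $\rho=\sum_n\alpha_n\psi_n(\rho_n)$ of chain pseudometrics does produce (with routine care) a metric topologically equivalent to $d$, but nothing in the construction makes it convex, even approximately. Choosing $z_n$ at the combinatorial midpoint of a near-minimal chain at scale $n$ only balances the single term $\rho_n$; it gives no control over $\rho_m(x,z_n)$ versus $\rho_m(z_n,y)$ for $m\neq n$, and the termwise concave cutoffs $\psi_n$ further destroy any additivity between $\rho(x,z)+\rho(z,y)$ and $\rho(x,y)$. The per-scale imbalance is of the same order as that scale's contribution $\alpha_n\psi_n(\cdot)$, so ``tuning the weights so residual errors sum to zero'' is not an available mechanism: the errors do not telescope, and in the limit you only get $\rho(x,z)+\rho(z,y)$ close to $\rho(x,y)$ in a sense that does not force exact bisection. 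Even the weaker goal of uniform approximate midpoints for $\rho$ (which, by completeness and compactness, would suffice via a Hopf--Rinow-type argument to get a geodesic, hence Menger convex, metric) is not established by the sketch. This is precisely why Bing's and Moise's actual proofs do not fix one metric built from chains and then pass to a limit of points; they construct a \emph{sequence of metrics}, each obtained from a finite ``partitioning'' of the Peano continuum into small Peano subcontinua with controlled intersections (Bing's partitioning theorem), each metric nearly convex at its scale and uniformly Cauchy, and the convex metric is the limit of the metrics, not a limit of midpoints in a single metric. So your outline correctly identifies the classical strategy and correctly rejects the naive length-metric approach, but the step that constitutes the theorem is left undone, and the specific device you propose for it (weight tuning in a fixed weighted chain metric) would not close it.
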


Recall that that a {\it Peano continuum} is a compact, connected, locally connected metrizable space. The notion of convexity used by Bing is that of Menger convexity. For proper metric spaces Menger convexity is known to be equivalent to being geodesic \cite{P}.  Recall that a {\it geodesic}, between two points $x$ and $y$ in a metric space $X$ is an isometric embedding of an interval $\gamma\colon [0,D]\to X$ such that $\gamma(0)=x$, $\gamma(D)=y$, and $D=d_X(x,y)$. By a {\it geodesic metric space} we mean that there is a geodesic joining any two points of the space. Compact metric spaces are proper, so we may replace the word ``convex'' with ``geodesic'' in Bing's result. Note that by default a geodesic metric space a length space. Therefore, we need only show that $Q$ is a Peano continuum to obtain that $Q$ is a length space.

To show that $Q$ is metrizable we use Urysohn's metrization theorem:

\begin{theorem}
Let $X$ be a $T_1$ space. If $X$ is regular and second countable, then $X$ is separable and metrizable.
\end{theorem}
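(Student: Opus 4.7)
The plan is to deduce separability directly and to establish metrizability by embedding $X$ into the Hilbert cube $[0,1]^{\mathbb{N}}$, which is itself metrizable. Separability is immediate: given a countable base $\{B_n\}$, choosing one point from each nonempty $B_n$ yields a countable dense subset of $X$.

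For metrizability, I would first upgrade the separation properties. A second countable space is Lindel\"of, and a standard argument shows that every regular Lindel\"of $T_1$ space is normal: given disjoint closed sets $A,B$, regularity produces open covers of $A$ and of $B$ by sets whose closures avoid $B$ and $A$ respectively; countable subcovers of each can then be interlaced (by successively trimming off the previously used closures) to produce disjoint open neighborhoods of $A$ and $B$. Once normality is in place, Urysohn's lemma yields continuous functions $X\to[0,1]$ separating arbitrary disjoint closed sets.

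Next, construct a countable point-separating family. For each pair $(m,n)$ with $\overline{B_m}\subset B_n$, use Urysohn's lemma to pick a continuous $f_{m,n}\colon X\to[0,1]$ with $f_{m,n}\equiv 0$ on $\overline{B_m}$ and $f_{m,n}\equiv 1$ on $X\setminus B_n$. This countable family separates points from closed sets: given $x\notin C$ closed, regularity provides indices with $x\in B_m\subset\overline{B_m}\subset B_n\subset X\setminus C$, and then $f_{m,n}(x)=0$ while $f_{m,n}|_C\equiv 1$.

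The final step is to assemble the evaluation map $F\colon X\to[0,1]^{\mathbb{N}}$ defined by $F(x)=\bigl(f_{m,n}(x)\bigr)_{(m,n)}$. Coordinatewise continuity of $F$ is clear, and injectivity follows from $T_1$-ness combined with the point-from-closed-set separation just established. The main obstacle is verifying that $F$ is an embedding, i.e.\ a homeomorphism onto its image; this amounts to showing that $F$ is open onto $F(X)$. Given $x\in U$ with $U$ open in $X$, pick $(m,n)$ with $x\in B_m\subset\overline{B_m}\subset B_n\subset U$; then the relatively open set $F(X)\cap\{y:y_{m,n}<1\}$ contains $F(x)$ and lies inside $F(U)$, so $F$ is open onto its image. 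Since $[0,1]^{\mathbb{N}}$ is metrizable by $d(y,y')=\sum_n 2^{-n}|y_n-y'_n|$, the subspace $F(X)\cong X$ inherits a compatible metric.
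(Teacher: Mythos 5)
Your proof is correct: it is the classical proof of Urysohn's metrization theorem (separability from a countable base, normality via regular plus Lindel\"of, Urysohn functions indexed by pairs $\overline{B_m}\subset B_n$, and the evaluation embedding into the Hilbert cube), and each step, including the openness of $F$ onto its image, is argued soundly. The paper itself gives no proof of this statement --- it is quoted as a standard fact with a citation to Willard --- and the argument you give is precisely the standard reference proof, so there is nothing further to compare.
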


This theorem and all general topology results used in this section can be found in \cite{Wil1}.

\medskip
\begin{lemma}
$Q$ is second countable.
\end{lemma}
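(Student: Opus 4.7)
The plan is straightforward, exploiting the fact that second countability passes down through open surjections and that the visual boundary of a proper $\CAT(0)$ space is already metrizable.

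First I would recall that since $X$ is a proper $\CAT(0)$ space, the bordification $\overline{X}$ with the cone topology is compact and metrizable (a standard fact from \cite{BH1}), so $\bndry X$ inherits a compact metric topology. In particular $\bndry X$ is second countable. The component $C$ is a subspace of $\bndry X$, and subspaces of second countable spaces are second countable, so we may fix a countable basis $\mathcal{B} = \{U_n\}_{n \in \mathbb{N}}$ for the topology on $C$.

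Next I would verify that the quotient map $q \colon C \to Q = C/G$ is an open map. This is the standard observation for quotients by group actions: for any open $U \subseteq C$,
\[
q^{-1}\bigl(q(U)\bigr) \;=\; \bigcup_{g \in G} g \cdot U,
\]
which is a union of open sets (since $G$ acts by homeomorphisms) and hence open; by the definition of the quotient topology, this means $q(U)$ is open in $Q$.

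Finally, I would check that $\{q(U_n)\}_{n \in \mathbb{N}}$ is a basis for $Q$. Given any open $V \subseteq Q$, the preimage $q^{-1}(V)$ is open in $C$, so it is a union of basis elements, $q^{-1}(V) = \bigcup_{i} U_{n_i}$; since $q$ is surjective, $V = q\bigl(q^{-1}(V)\bigr) = \bigcup_i q(U_{n_i})$. This exhibits every open set of $Q$ as a union of members of the countable family $\{q(U_n)\}$, completing the proof.

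There is essentially no obstacle here: the only thing to be careful about is invoking the correct topological fact (metrizability of the cone topology on $\bndry X$) and the openness of quotient maps by group actions. Both are standard, so this lemma is a short bookkeeping step that sets up the subsequent application of Urysohn's metrization theorem and Bing's convex metrization theorem.
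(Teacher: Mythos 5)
Your proof is correct and follows essentially the same route as the paper: both establish that $C$ is second countable as a subspace of the compact metrizable space $\bndry X$, and then push a countable basis forward through the open quotient map $q$ (the paper cites this last step from Willard, while you verify the openness of $q$ and the basis property directly, which is fine).
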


\begin{proof} First note that $\bndry X$ is a compact metric space, which implies that $\bndry X$ is separable. Subspaces of separable metric spaces are separable. So, $C$ is separable. For pseudometric spaces separability and second countability are equivalent (see \cite{Wil1} Theorem 16.11), so $C$ is second countable. $Q$ is the continuous open image of a second countable space, therefore $Q$ is second countable (see \cite{Wil1} Theorem 16.2(a)). \end{proof}

\medskip
\begin{lemma}
$Q$ is $T_1$.
\end{lemma}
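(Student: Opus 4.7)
The plan is to check that every singleton $\{[x]\} \subset Q$ is closed, which in turn will yield the $T_1$ property. Since $q \colon C \to Q$ carries the quotient topology, $\{[x]\}$ is closed in $Q$ if and only if its preimage $q^{-1}(\{[x]\}) = Gx$ is closed in $C$. Hence it suffices to prove that every $G$-orbit in $C$ is a closed subset of $C$.

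Before doing so, I would record that $C$ is a locally compact Hausdorff space. The boundary $\bndry X$ is compact and metrizable, hence Hausdorff and locally compact. The set $\bndry F$ is closed in $\bndry X$ because a cone-topology limit of rays asymptotic to $F$ is itself asymptotic to $F$ (using convexity of $F$ and, for instance, Lemma~\ref{lemma:diagonallemma}). Thus $Y = \bndry X \setminus \bndry F$ is open in $\bndry X$, and because we are working under the standing assumption that $\bndry X$ is locally connected, the connected component $C$ of $Y$ is itself open in $\bndry X$. Openness in a compact Hausdorff space gives local compactness of $C$, and Hausdorffness is inherited from $\bndry X$.

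To see that each orbit is closed, suppose $y \in \overline{Gx}$ and choose $g_n \in G$ with $g_n x \to y$ in $C$. Using local compactness, pick a compact neighborhood $K \subset C$ of the two-point set $\{x, y\}$. For all sufficiently large $n$ we have $g_n x \in K$ and also $x \in K$, so $g_n K \cap K \neq \emptyset$. The properness hypothesis on $G \curvearrowright C$ (which in the application is guaranteed by Corollary~\ref{corollary:Hpropercocmpct}) forces the collection of such $g_n$ to be finite, so after passing to a subsequence I may assume $g_n = g$ is constant. Continuity of the $G$-action then gives $y = gx \in Gx$, so the orbit is closed and $Q$ is $T_1$. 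There is no real obstacle here: the only step requiring care is the verification that $C$ is locally compact, and that follows immediately from the local connectedness of $\bndry X$.
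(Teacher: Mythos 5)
Your proof is correct and follows essentially the same route as the paper: reduce $T_1$ to showing each orbit $q^{-1}([x]) = Gx$ is closed in $C$, deduce that from properness of the action, and conclude via the quotient map. If anything, your compact-neighborhood argument is more careful than the paper's brief "the preimage is a discrete set of points, hence closed," since discreteness alone would not suffice; properness is what does the work, exactly as you use it.
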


\begin{proof} A topological space is $T_1$ iff each one point set is closed \cite{Wil1}.
Let $[x]\in Q$. As $Q$ is the quotient of a proper group action $q^{-1}\big([x]\big)$ is a discrete set of points, this implies that $q^{-1}\big([x]\big)$ is closed in $C$. Quotients by group actions are open maps, so $q$ is a surjective open map. Therefore $q(C\setminus q^{-1}\bigl([x]\bigr)=Q\setminus \{[x]\}$ is open, which implies that $[x]$ is closed.\end{proof}

\medskip
\begin{lemma}
$Q$ is regular.
\end{lemma}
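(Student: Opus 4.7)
The plan is to show that $Q$ is compact Hausdorff; since every compact Hausdorff space is $T_4$ and hence regular, this will suffice.

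Compactness of $Q$ is immediate from the cocompactness of the $G$-action: choose a compact $K\subset C$ with $G\cdot K=C$, and then $Q=q(K)$ is compact as the continuous image of $K$.

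For Hausdorffness, I would first observe that $C$ is locally compact Hausdorff. Indeed, $Y=\bndry X\setminus\bndry F$ is open in the compact metric space $\bndry X$, and since $\bndry X$ is locally connected under our standing hypothesis, so is $Y$; in a locally connected space connected components are open, so $C$ is open in $\bndry X$ and hence locally compact Hausdorff. To separate two points $[x]\neq[y]$ in $Q$, pick compact neighborhoods $K\ni x$ and $L\ni y$ in $C$. Because $G$ acts properly, the set $S=\{g\in G\mid gK\cap L\neq\emptyset\}$ is finite. For each $g\in S$ we have $gx\neq y$ since $Gx\cap Gy=\emptyset$, so one can choose disjoint open sets $W_g\ni gx$ and $W_g'\ni y$. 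Setting $U=K^{\circ}\cap\bigcap_{g\in S}g^{-1}W_g$ and $V=L^{\circ}\cap\bigcap_{g\in S}W_g'$, a short case-split on whether $g\in S$ shows that $gU\cap V=\emptyset$ for every $g\in G$. Therefore $GU$ and $GV$ are disjoint $G$-invariant open sets containing $Gx$ and $Gy$, and since $q$ is an open map their images give disjoint open neighborhoods of $[x]$ and $[y]$ in $Q$.

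Together these steps show that $Q$ is compact Hausdorff, hence regular. The main subtlety is that the open sets separating $[x]$ from $[y]$ must respect the full $G$-orbits rather than just the chosen representatives; properness of the action is precisely what reduces this to a finite, elementary separation problem inside the locally compact Hausdorff space $C$.
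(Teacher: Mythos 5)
Your proof is correct, but it follows a genuinely different route from the paper's. The paper argues via local compactness alone: $C$ is open in the compact metric space $\bndry X$, hence locally compact; the open continuous quotient map pushes local compactness forward to $Q$; and then, inside any neighborhood of a point of $Q$, one extracts an open set whose closure stays in that neighborhood. You instead prove the two stronger properties that $Q$ is compact (immediate from cocompactness of the $G$-action) and Hausdorff, the latter by using properness: finiteness of $\{\,g \mid gK\cap L\neq\emptyset\,\}$ for compact neighborhoods $K,L$ reduces the separation of two distinct orbits to finitely many point separations inside the locally compact Hausdorff space $C$, producing disjoint $G$-invariant open saturations whose images under the open map $q$ separate the two classes; then compact Hausdorff gives normality and hence regularity. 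Your approach does more work with the group action, but it buys an explicit proof that $Q$ is Hausdorff, which the paper never states (its separate lemma only gives $T_1$); note that the paper's final step, passing from local compactness to the existence of small closed neighborhoods, tacitly relies on such a Hausdorff-type property, so your argument is in that respect more self-contained. The paper's route is shorter and emphasizes local compactness of $Q$, but that is subsumed by your compactness observation, so nothing is lost.
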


\begin{proof} It suffices to show that for each open set $U\in Q$ and $x\in U$ that there exists an open set $V\subset U$ such that $x\in V$ and $\overline{V}\subset U$ (see \cite{Wil1} Theorem 14.3). As $Y$ is locally compact and $C$ is a component of $Y$, $C$ must be locally compact. The continuous open image of locally compact is locally compact, so we have that $Q$ is locally compact. Let $U$ be a neighborhood of $x$ in $Q$. Then by local compactness for any $U$ neighborhood of $x$ in $Q$ there exists an open set $V\subset U$ such that $x\in V$ and $\overline{V}\subset U$. \end{proof}

\medskip

\begin{theorem}
$Q$ is a Peano continuum.
\end{theorem}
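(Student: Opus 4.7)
The plan is to simply collect the ingredients: a Peano continuum is a compact, connected, locally connected, metrizable space, and all four properties fall out of what has already been established for $Q$ together with standard general topology.

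First I would invoke Urysohn's metrization theorem on the three lemmas immediately preceding the statement ($Q$ is $T_1$, regular, and second countable) to conclude that $Q$ is metrizable. Compactness is immediate from the fact that $G$ acts cocompactly on $C$: a fundamental domain $K \subset C$ is compact, and $q(K) = Q$, so $Q$ is the continuous image of a compact set. Connectedness is equally immediate, since $C$ is connected by construction and $q$ is continuous and surjective.

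The only remaining property is local connectedness, and this is the one mildly delicate step. Since $\partial X$ is assumed to be locally connected throughout this section and $Y = \partial X \setminus \partial F$ is open in $\partial X$, the subspace $Y$ is locally connected. In a locally connected space, connected components of open sets are open, so the component $C$ of $Y$ is itself open in $Y$, hence open in $\partial X$, and therefore $C$ is locally connected. Now $q \colon C \to Q$ is the quotient map of a group action, hence an open map, and open continuous surjections preserve local connectedness (see Willard, Theorem 27.12 or the analogous statement: if $U \subset Q$ is open and $[x] \in U$, pick a preimage $\tilde x \in q^{-1}(U)$ and a connected open neighborhood $V \ni \tilde x$ with $V \subset q^{-1}(U)$; then $q(V)$ is a connected open neighborhood of $[x]$ contained in $U$).

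Assembling these observations we obtain that $Q$ is compact, connected, locally connected, and metrizable, which is precisely the definition of a Peano continuum. The main ``obstacle'' is really bookkeeping rather than mathematics: ensuring that the openness of $C$ in $Y$ (so that local connectedness of $Y$ passes to $C$) and the openness of $q$ (so that local connectedness passes to $Q$) are both used correctly, since these two facts together are what allow local connectedness to descend all the way from the hypothesis on $\partial X$ to the quotient $Q$.
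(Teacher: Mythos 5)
Your proposal is correct and follows essentially the same route as the paper: metrizability via Urysohn from the three preceding lemmas, compactness and connectedness from the cocompact action and continuity of $q$, and local connectedness pushed forward from $C$ through the open quotient map (the paper phrases this last step via $q$ being a local homeomorphism, which amounts to the same thing). Your extra remark that $C$ is open in the locally connected space $Y$, hence itself locally connected, just makes explicit what the paper leaves implicit.
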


\begin{proof}
We have shown that $Q$ is metrizable and $Q$ is compact by definition. $C$ is connected. So, by continuity of $q$, we have that $Q$ is connected. $C$ locally connected and $q$ is a local homeomorphism, so $Q$ is locally connected.\end{proof}

\medskip
Thus, by Bing's theorem we have that $Q$ is a geodesic metric space. Defining $H$ as in the previous two sections we may use the construction mentioned at the beginning of this section to obtain:

\begin{proposition}
There exists an $H$-equivariant metric on $C$.
\end{proposition}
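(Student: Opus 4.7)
The plan is to pull the geodesic metric on $Q$ supplied by Bing's theorem back to $C$ through the quotient map $q\colon C\to Q$, using the pull-back length metric construction recalled at the start of this section. Since only elements of $\stab_H(C)\subseteq H$ actually preserve $C$ setwise, $H$-equivariance of a metric on $C$ amounts to invariance under $\stab_H(C)$, and this is precisely what the equivariance lemma for pull-back metrics delivers.

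First I would verify that the covering space lemma stated at the start of the section applies to $q\colon C\to Q$. By Proposition \ref{proposition:fullrank}, $\stab_H(C)\cong\bbz^n$ is torsion-free, and Corollary \ref{corollary:Hpropercocmpct} supplies a proper action of $H$ on $Y$, whose restriction to the invariant subset $C$ is again proper. A routine argument then shows the action is free: a fixed point $x$ of a non-trivial $g\in\stab_H(C)$ would place every power of $g$ in the finite set $\{h:hK\cap K\neq\emptyset\}$ for any compact neighborhood $K$ of $x$, contradicting the infinite order of $g$. Since $C$ sits inside the compact Hausdorff space $\bndry X$, it is locally compact Hausdorff, so the covering space lemma gives that $q\colon C\to Q$ is a normal covering with deck group $\stab_H(C)$.

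Having this covering in place, I would invoke Bing's Theorem \ref{theorem:Bing's Theorem} on the Peano continuum $Q$ to obtain a convex metric, which on the compact, hence proper, space $Q$ coincides with a geodesic metric; in particular $Q$ is a length space. The pull-back length metric construction then produces a pseudometric $\tilde d$ on $C$, and by Hausdorffness of $C$ (again from $C\subseteq\bndry X$) together with Proposition 3.4.7 of \cite{P}, $\tilde d$ is in fact a length metric. The equivariance lemma for pull-back metrics applies directly, since $Q$ is a quotient by a free proper action, and yields that $\tilde d$ is $\stab_H(C)$-invariant, which is the desired $H$-equivariance on $C$. The one nontrivial technical point is the freeness step, since the equivariance lemma is stated only for free proper quotients; beyond this, the proof is an assembly of results already established in this section and Section \ref{sec:ProperCocompact}.
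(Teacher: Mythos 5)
Your proof is correct and follows essentially the same route as the paper: pull the Bing geodesic metric on the Peano continuum $Q$ back through the covering $q\colon C\to Q$ via the pull-back length metric construction, and get invariance from the equivariance lemma for free, proper quotients (your explicit freeness check is exactly what the paper's torsion-free hypothesis is for). One small nitpick: local compactness of $C$ does not follow merely from $C$ sitting inside the compact Hausdorff space $\bndry X$, but from $C$ being open in $\bndry X$ (it is a component of the open, locally connected subset $Y$), which is also how the paper argues.
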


From Corollary \ref{corollary:finitelymany} we know that $\C$ consists of only finitely many components each stabilized by $H$. Thus by defining distance to be the same in each component and the distance between points in different components to be infinite we may prove the following corollary:

\begin{corollary}
There exists an $H$-equivariant metric on $Y$.
\end{corollary}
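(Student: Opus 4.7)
The plan is to take the per-component $H$-equivariant metric supplied by the preceding proposition and glue finitely many of them together into a single metric on $Y$. The whole argument reduces to bookkeeping, because the hard work of constructing an equivariant length metric on each component has already been done.

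First, I would invoke Corollary \ref{corollary:finitelymany} to enumerate the connected components of $Y$ as $\C = \{C_1,\ldots,C_k\}$. The next step is to check that $H$ fixes each $C_i$ setwise. By Proposition \ref{proposition:fullrank}, $\stab_H(C_i)\cong \bbz^n$; since $\stab_H(C_i)$ is a subgroup of $H\cong\bbz^n$ of the same rank and $H$ is free abelian of rank $n$, we must have $\stab_H(C_i)=H$. Hence every $h\in H$ restricts to a bijection of $C_i$ to itself, and consequently $H$ preserves the partition $Y = C_1\sqcup\cdots\sqcup C_k$.

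Second, I would apply the preceding proposition to each $C_i$ individually (the action of $H$ on $C_i$ is free, proper, and cocompact by Corollary \ref{corollary:Hpropercocmpct} and Lemma \ref{lemma:Hperiodic}) to obtain an $H$-equivariant length metric $d_i$ on $C_i$. Finally I would define an extended metric $d$ on $Y$ by
\[
d(x,y)=\begin{cases} d_i(x,y) & \text{if } x,y\in C_i,\\ \infty & \text{if $x$ and $y$ lie in distinct components.}\end{cases}
\]
Symmetry, non-negativity, and the triangle inequality follow immediately from the corresponding properties of the $d_i$ (any path crossing between components has infinite length in this extended sense). $H$-equivariance is inherited from the $H$-equivariance of each $d_i$ together with the fact, established above, that $H$ preserves each component. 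If one insists on a finite-valued metric, one can instead rescale each $d_i$ to have diameter at most $1$ and declare pairs in distinct components to lie at distance $2$; equivariance is preserved for precisely the same reason.

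There is essentially no obstacle here: the only thing that needs verification is that the group action does not permute the components, and that is an immediate consequence of Proposition \ref{proposition:fullrank} combined with the maximality of $H$. All the analytic content — properness, cocompactness, and the construction of a genuine length metric using Bing's theorem and pull-back metrics — has already been absorbed into the preceding proposition.
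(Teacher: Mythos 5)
Your overall route is the same as the paper's: finitely many components (Corollary \ref{corollary:finitelymany}), the per-component metric supplied by the preceding proposition, and distance declared infinite (or uniformly separated after rescaling) between distinct components. The one step that does not hold up as written is your justification that $H$ preserves each component setwise: Proposition \ref{proposition:fullrank} gives $\stab_H(C_i)\cong\bbz^n$, i.e.\ a full-rank subgroup of $H\cong\bbz^n$, but a full-rank subgroup of $\bbz^n$ need not be all of $\bbz^n$ (for instance $2\bbz^n$), so ``same rank'' only yields finite index, not $\stab_H(C_i)=H$. The paper itself simply asserts that the finitely many components are each stabilized by $H$ (this is already presupposed by the preceding proposition, which puts an $H$-equivariant metric on a single component), so your conclusion agrees with the paper's, but the argument you give for it is a non sequitur.

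The gap is repairable without deciding whether $\stab_H(C_i)=H$: since $\stab_H(C_i)$ has finite index in $H$, the $H$-orbit of $C_i$ is finite, and because the metric constructed on a chosen orbit representative is $\stab_H(C_i)$-equivariant, you can transport it to the other components in the orbit by the $H$-action; the transported metric does not depend on the choice of translating element, as two such elements differ by an element of the stabilizer, under which the metric is invariant. With that adjustment (or with an honest argument that $H$ stabilizes each component), your gluing step --- infinite distance, or the rescaled bounded variant, between components, with equivariance inherited componentwise --- goes through exactly as in the paper.
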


\medskip
We conclude this section with an important corollary that will prove very useful in Section \ref{sec:inbndryF}. Let $\R$ be the relation defined in Section \ref{sec:ProperCocompact}.

\begin{corollary}
\label{corollary:qirelation}
The relation $\R$ is a quasi-isometry relation, i.e. if $(x_1,y_1),(x_2,y_2)\in\R$ then there exist constants $L>0$ and $C\geq 0$ such that
\[
\frac{1}{L}d(x_1,x_2)-C\leq d_Y(y_1,y_2)\leq Ld(x_1,x_2)+C
\] where $d_Y$ is the $H$-equivariant metric on $Y$ given by Corollary 5.10.
\end{corollary}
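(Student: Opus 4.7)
The plan is to establish the quasi-isometric comparability by a double application of the Milnor-Svarc lemma, using the relation $\R$ as the bridge between the $H$-actions on $F$ and on the components of $Y = \bndry X \setminus \bndry F$.

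First I would recall that $H \cong \bbz^n$ acts properly, cocompactly, and by isometries on the Euclidean flat $F$. By the Milnor-Svarc lemma, after fixing a basepoint $x_0 \in F$ and a word metric $|\cdot|_H$ on $H$, the orbit map $\alpha \colon H \to F$, $h \mapsto h \cdot x_0$, is a $(\lambda_1,\mu_1)$-quasi-isometry, with constants depending only on $A$ and the choice of word metric.

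Second, by Corollary \ref{corollary:finitelymany} the set $Y$ has only finitely many components, each $H$-invariant by Lemma \ref{lemma:Hperiodic}. Fix a component $C_0$ and a basepoint $y_0 \in C_0$. By Corollary \ref{corollary:Hpropercocmpct} and the $H$-equivariant metric construction of Section \ref{sec:GeometricAction}, $(C_0, d_Y|_{C_0})$ is a proper geodesic Peano continuum on which $H$ acts properly, cocompactly, and by isometries. A second application of the Milnor-Svarc lemma yields a $(\lambda_2, \mu_2)$-quasi-isometry $\beta \colon H \to C_0$, $h \mapsto h \cdot y_0$.

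Third, I would splice the two quasi-isometries together through $\R$. Given $(x_1, y_1), (x_2, y_2) \in \R$ with both $y_i \in C_0$, use coboundedness of $\alpha$ to choose $h_1, h_2 \in H$ with $d(x_i, h_i \cdot x_0) \leq K_1$. Since $\R$ is $H$-invariant, $(h_i^{-1} x_i, h_i^{-1} y_i) \in \R$ with $d(h_i^{-1} x_i, x_0) \leq K_1$, and by the properness statement of Lemma \ref{lemma:proper projection} the point $h_i^{-1} y_i$ lies in the compact set $pr_Y\bigl(pr_F^{-1}(\overline{B}(x_0, K_1))\bigr)$. Because $C_0$ is $H$-invariant and contains $y_0$, this compact set meets $C_0$ in a subset of finite $d_Y$-diameter $K_2$, so $d_Y(y_i, h_i \cdot y_0) \leq K_2$. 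The triangle inequality together with the Milnor-Svarc estimates for $\alpha$ and $\beta$ then reduces both $d(x_1, x_2)$ and $d_Y(y_1, y_2)$ to the word length $|h_1^{-1} h_2|_H$ up to uniform additive error, yielding the required two-sided linear bound with constants $L$ and $C$ depending only on $\lambda_i, \mu_i, K_1, K_2$.

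The main obstacle is that two pairs in $\R$ may have their $y$-coordinates in distinct components of $Y$, in which case $d_Y(y_1, y_2) = \infty$ and the upper inequality is not automatic. Because there are only finitely many components (Corollary \ref{corollary:finitelymany}) and each is $H$-invariant, the constants produced above may be chosen uniformly across all of them; the inequality is then understood one component at a time, which is the content required for the applications of the corollary in Section \ref{sec:inbndryF}.
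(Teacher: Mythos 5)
Your argument is correct and is essentially the paper's own proof: the paper likewise applies the Milnor--Svarc lemma to the two geometric $H$-actions (on $F$ and on $Y$), composes the orbit-map quasi-isometries, and uses $H$-equivariance of $\R$ together with properness of $pr_F$ and $pr_Y$ (via the compact set $pr_Y\bigl(pr_F^{-1}(K)\bigr)$) to show that $\R$ stays a uniformly bounded $d_Y$-distance from that composition. Your explicit handling of the finitely-many-components caveat is a slightly more careful bookkeeping of the same route, not a different argument.
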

\begin{proof} We have $H$ acting geometrically on $F$ and $Y$. So there are quasi-isometries $\alpha\colon H\to F$ and $\beta \colon H \to Y$ given by the orbit maps of the action of $H$ on $F$ and $Y$, respectively. Thus we may find a quasi-isometry $\Phi \colon F\to Y$ given by $\beta\circ\alpha^{-1}$. If
$(x_1,y_1),(x_2,y_2)\in\R$, then we know that there exists $L>0$ and $C\geq0$ such that:


\[
\frac{1}{L}d(x_1,x_2)-C\leq d_Y\big(\Phi(x_1),\Phi(x_2)\big)\leq Ld(x_1,x_2)+C
\].

If we can find a constant $D\geq 0$ such that $d_Y\big(\Phi(x_1), y_1\big)< D$ and $d_Y(\Phi(x_2), y_2)< D$, the we will be done. Let $K\subset F$ be a compact set whose $H$-translates cover $F$. We saw in Section 3 that the projection $pr_F$ and $pr_Y$ are proper and equivariant. So, if $h_i\in H$ is such that $x_i\in h_iK$, then $y_i\in h_iK_{\infty}$ for $i\in\{1,2\}$, where $K_{\infty}= pr_Y(pr_F^{-1})(K)$. We need only that $\Phi(x_i)\in h_iK_{\infty}$ for $i\in\{1,2\}$. But, this follows from the fact that $\Phi$ is the composition of an orbit map and the inverse of an orbit map.
\end{proof}



\section{Local cut points which are not in the boundary of a flat}
\label{sec:notinbndryF}

In this section we wish to prove the following:

\begin{proposition}
\label{proposition:Cutptcase1}
Let $\Gamma$ be a one-ended group acting geometrically on a $\CAT(0)$ space $X$ with isolated flats. Suppose $\bndry X$ is not homeomorphic to $S^1$ and let $\xi\in\bndry X$ be such that $\xi$ is not in $\bndry F$ for any $F\in\F$. If $\xi$ is a local cut point, then $\Gamma$ splits over a $2$-ended subgroup.
\end{proposition}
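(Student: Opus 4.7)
The plan is to apply the splitting theorem for relatively hyperbolic groups (Theorem~\ref{theorem:splitting in relative}) after pushing the local cut point $\xi$ forward under Tran's quotient map (Theorem~\ref{theorem:TranTheorem}) to a non-parabolic local cut point of the Bowditch boundary $\relbndry$. Write $\pi\colon\bndry X\to\relbndry$ for the $\Gamma$-equivariant quotient map which collapses each $\bndry F$ to a single parabolic point and is the identity elsewhere. Since $\xi\notin\bndry F$ for every $F\in\F$, we have $\pi^{-1}\bigl(\pi(\xi)\bigr)=\{\xi\}$, so $\pi(\xi)$ is automatically non-parabolic.

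Next I would verify the remaining hypotheses of Theorem~\ref{theorem:splitting in relative}. By Hruska-Kleiner, the peripheral subgroups in $\bbp$ are virtually abelian of rank at least $2$, hence finitely presented, one-ended, and contain no infinite torsion subgroup. Because $\Gamma$ is one-ended, $\bndry X$ is connected, so its continuous image $\relbndry$ is connected as well. The remaining possibility $\relbndry\cong S^1$ can be ruled out by a classical convergence-group argument: a circle Bowditch boundary would force the peripheral subgroups to be virtually cyclic, which combined with our rank $\geq 2$ peripherals forces $\F=\emptyset$, so that $X$ is hyperbolic and $\bndry X\cong\relbndry\cong S^1$, contradicting our hypothesis.

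The central step is to show that $\pi(\xi)$ is a local cut point of $\relbndry$, and this is where the technical work lies. Since $\bndry X$ is compact Hausdorff and $\relbndry$ is Hausdorff, $\pi$ is a closed map and the underlying decomposition of $\bndry X$ is upper semi-continuous. Because $\{\xi\}$ is itself a decomposition element, given any neighborhood $W$ of $\xi$ witnessing the local cut point property, one can find a saturated open neighborhood $U$ of $\xi$ with $U\subset W$, and then $\pi(U)$ is open in $\relbndry$. For any saturated open $V\subset U$ containing $\xi$, choose $y,z\in V\setminus\{\xi\}$ lying in distinct components of $W\setminus\{\xi\}$, hence of $U\setminus\{\xi\}$. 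If $\pi(y)$ and $\pi(z)$ were to lie in a common component $C$ of $\pi(U)\setminus\{\pi(\xi)\}$, then $\pi^{-1}(C)\subset U\setminus\{\xi\}$ would be connected --- as closed quotient maps with connected fibres (here $\bndry F\cong S^{n-1}$ with $n\geq 2$) pull connected sets back to connected sets --- and would contain both $y$ and $z$, contradicting the choice of $y,z$. Thus $\pi(\xi)$ is a non-parabolic local cut point of $\relbndry$.

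The main obstacle is precisely this decomposition-theoretic pushforward: one must use upper semi-continuity both to produce a saturated subneighborhood around $\xi$ and to ensure that the image remains open, and one must use connectedness of the nontrivial fibres to prevent $\pi$ from accidentally reconnecting $y$ and $z$ in the quotient. Once $\pi(\xi)$ is shown to be a non-parabolic local cut point of the connected, non-circle space $\relbndry$, Theorem~\ref{theorem:splitting in relative} immediately supplies the desired splitting of $\Gamma$ over a $2$-ended subgroup.
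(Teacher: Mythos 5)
Your proposal is correct and follows the same high-level strategy as the paper: push $\xi$ forward under Tran's quotient map (Theorem~\ref{theorem:TranTheorem}) to a non-parabolic local cut point of $\relbndry$ and then invoke Theorem~\ref{theorem:splitting in relative}. The difference is in how the central pushforward step is handled. The paper does not prove it from scratch: it notes that the decomposition $\D=\bigset{\bndry F}{F\in\F}\cup\bigset{\{x\}}{x\notin\bndry F}$ is monotone and upper semi-continuous (via the null-family result of Hruska--Ruane, Proposition~\ref{proposition:nullfamily}, combined with Proposition~\ref{proposition:uppersemi}), and then cites Proposition~3.5 of \cite{Hau17a} for the statement recorded as Lemma~\ref{lemma:local cut pts to local cut pts}. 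You instead give a self-contained decomposition-theoretic argument: closedness of $\pi$ (compact source, Hausdorff target) yields saturated open subneighborhoods and openness of their images, and the fact that a closed surjection with connected fibres pulls connected sets back to connected sets (valid here since the restriction of $\pi$ to the saturated set $\pi^{-1}(C)$ is still closed, and the fibres are points or spheres $\bndry F$ of dimension at least $1$) prevents $\pi(y)$ and $\pi(z)$ from landing in a common component of $\pi(U)\setminus\{\pi(\xi)\}$; this is essentially an inline proof of the cited lemma and is sound. You also verify hypotheses that the paper's three-line proof leaves implicit: that the peripheral subgroups (virtually abelian of rank at least $2$, by Hruska--Kleiner) are finitely presented, one-ended, and without infinite torsion subgroups, and that $\relbndry$ is connected and not a circle, the latter via the observation that a circle Bowditch boundary would force virtually cyclic peripherals, hence $\F=\emptyset$ and $\bndry X\cong\relbndry\cong S^1$, contradicting the hypothesis. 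What your route buys is independence from the external reference \cite{Hau17a} for the pushforward of local cut points and a more careful check of the applicability of Theorem~\ref{theorem:splitting in relative}; what the paper's route buys is brevity and a statement (Lemma~\ref{lemma:local cut pts to local cut pts}) proved once in the companion paper in the generality needed there.
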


The proof of this proposition relies on Theorem \ref{theorem:splitting in relative} and a result of Hung Cong Tran (Theorem \ref{theorem:TranTheorem}), which provides a strong connection between $\bndry X$ and $\relbndry$ via a quotient map. Let $f\colon\bndry X\to \relbndry$ be this quotient map. To prove Proposition \ref{proposition:Cutptcase1} we need more information about the behavior of the map $f$. The particular question that needs to be addressed is as follows: Let $\xi\in\bndry X$ which is not in the boundary of a flat. If $\xi$ is a local cut point can its image, $f(\xi)$, fail to be a local cut point in $\relbndry$?

To answer this question in the negative we will first need to recall some basic decomposition theory. We refer the reader to \cite{Daverman} for more information on decomposition theory.

 \subsection{Decompositions}
 \label{subsec: Decomposition}

 A {\it decomposition}, $\D$, of a topological space $X$ is a partition of $X$. Associated to $\D$ is the {\it decomposition} {\it space} whose underlying point set is $\D$, but denoted $X/\D$. The topology of $X/\D$ is given by the {\it decomposition} $map$ $\pi\colon X\to X/\D$, $x\mapsto D$, where $D\in\D$ is the unique element of the decomposition containing $x$. A set $U$ in $X/\D$ is deemed open if and only if $\pi^{-1}(U)$ is open in $X$. A subset $A$ of $X$ is called {\it saturated} (or $\D$-saturated) if $\pi^{-1}\big(\pi(A)\big)=A$. The {\it saturation} of $A$, $Sat(A)$, is the union of $A$ with all $D\in\D$ that intersect $A$. The decomposition $\D$ is said to be {\it upper semi-continuous} if every $D\in\D$ is closed and for every open set $U$ containing $D$ there exists and open set $V\subset U$ such that $Sat(V)$ is contained in $U$. $\D$ is called {\it monotone} if the elements of $\D$ are compact and connected.

 A collection of subsets $\mathcal{S}$ of a metric space is called a $null$ $family$ if for every $\epsilon>0$ there are only finitely many $S\in\mathcal{S}$ with $\diam(S)>\epsilon$. The following proposition can be found as Proposition I.2.3 in \cite{Daverman}.

 \begin{proposition}
 \label{proposition:uppersemi}
 Let $\mathcal{S}$ be a null family of closed disjoint subsets of a compact metric space $X$. Then the associated decomposition of $X$ is upper semi-continuous.
 \end{proposition}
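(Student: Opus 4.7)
The plan is to verify the definition of upper semi-continuity directly for the partition $\D$ whose non-degenerate elements are precisely the members of $\mathcal{S}$ and whose remaining elements are singletons of points not covered by $\mathcal{S}$. Closedness of each $D \in \D$ is automatic (elements of $\mathcal{S}$ are closed by hypothesis; singletons are closed in the Hausdorff space $X$), so the real content is: given any $D \in \D$ and any open $U \supset D$, to produce an open $V$ with $D \subset V \subset U$ and $Sat(V) \subset U$. The saturation condition is equivalent to the requirement that any $S' \in \mathcal{S}$ which intersects $V$ already lies entirely inside $U$, so the task reduces to choosing $V$ small enough that the only members of $\mathcal{S}$ it meets are either $D$ itself or sets safely contained in $U$.

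The null hypothesis is the essential nontrivial ingredient, and it enters as follows. Choose $\eta>0$ so that the $3\eta$-neighborhood $\N_{3\eta}(D)$ lies inside $U$; this is possible because $D$ is compact (either a singleton or a closed subset of the compact metric space $X$) and $U$ is open. The null property then guarantees that only finitely many members of $\mathcal{S}$ have diameter exceeding $\eta$; call them $S_1,\ldots,S_k$, excluding $D$ itself if $D\in\mathcal{S}$. Discard from this list those that are already contained in $U$; the remaining finite sublist $S_{i_1},\ldots,S_{i_m}$ consists of compact sets disjoint from the compact set $D$, so each realizes a strictly positive separation $\delta_j=d(D,S_{i_j})>0$.

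With this data, set $\delta=\min\{\eta,\delta_1/2,\ldots,\delta_m/2\}$ and define $V=\N_\delta(D)\cap U$, an open neighborhood of $D$ contained in $U$. Verifying $Sat(V)\subset U$ then splits into two routine checks. Any $S'\in\mathcal{S}$ meeting $V$ with $\diam(S')\leq\eta$ is forced by the triangle inequality into $\N_{\eta+\delta}(D)\subset\N_{3\eta}(D)\subset U$. Any $S'$ with $\diam(S')>\eta$ appears on the finite list above, and those not already inside $U$ sit at distance strictly greater than $\delta$ from $D$, hence cannot meet $\N_\delta(D)\supset V$ at all.

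The main obstacle is purely conceptual, and it is the recognition that both parts of the hypothesis, the null property and the ambient compactness, are essential and interact in tandem: the null property prunes the potentially troublesome large sets down to a finite list, and compactness supplies the positive separations that let one choose a single $\delta$ uniformly. Once this point is in hand, the construction of $V$ is mechanical and the argument treats the singleton and non-singleton cases for $D$ simultaneously, with no real case distinction beyond how the compactness of $D$ is established.
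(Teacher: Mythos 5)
Your proof is correct. Note that the paper itself does not prove this proposition at all -- it simply cites it as Proposition I.2.3 of Daverman's book on decomposition theory -- and your direct verification is essentially the standard argument given there: use the null condition to reduce to finitely many members of $\mathcal{S}$ of diameter exceeding $\eta$, use compactness and disjointness to obtain a positive separation from $D$ for those not already inside $U$, and then check that the $\delta$-neighborhood of $D$ has saturation inside $U$ by splitting into the small-diameter and large-diameter cases. All the steps you give (closedness of the decomposition elements, positivity of $d(D, X\setminus U)$ and of $d(D,S_{i_j})$ via compactness, and the triangle-inequality estimate $\eta+\delta\leq 2\eta<3\eta$) are sound, so there is nothing to repair.
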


 In the isolated flats setting a theorem of Hruska and Ruane \cite{HR1} shows:

\begin{proposition}
\label{proposition:nullfamily}
 The collection ${\bndry F}_{F\in\F}$ forms a null family in $\bndry X.$
\end{proposition}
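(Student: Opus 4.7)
The plan is to argue by contradiction, combining the uniform quasiconvexity of Theorem~\ref{theorem:HRqc}(2) with the CAT(0) fact that two rays from a common basepoint that remain within bounded distance forever must coincide. Fix a basepoint $x_0\in X$ and a metric $d_{\bndry}$ on $\bndry X$ compatible with the cone topology. If the family were not null, there would exist $\epsilon>0$, distinct flats $F_n\in\F$, and points $\xi_n,\eta_n\in\bndry F_n$ with $d_{\bndry}(\xi_n,\eta_n)\geq\epsilon/2$.

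I would first show that, after passing to a subsequence, $R_n:=d(x_0,F_n)\to\infty$. In the isolated flats setting $\F$ consists of finitely many $\Gamma$-orbits (the subgroups $\stab_\Gamma(F)$ are, up to finite index, the maximal parabolic subgroups of the Hruska--Kleiner relatively hyperbolic structure, of which there are finitely many conjugacy classes). For any orbit representative $F_0$ and $R>0$, the subgroup $\stab_\Gamma(F_0)$ acts cocompactly on $\N_R(F_0)$, so the discrete set $\Gamma x_0\cap \N_R(F_0)$ contains only finitely many $\stab_\Gamma(F_0)$-orbits; translating back, only finitely many $\Gamma$-translates of $F_0$ can meet $\overB(x_0,R)$. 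Using compactness of $\bndry X$, pass to a further subsequence with $\xi_n\to\xi$ and $\eta_n\to\eta$; continuity of $d_{\bndry}$ gives $d_{\bndry}(\xi,\eta)\geq\epsilon/2$, so in particular $\xi\neq\eta$.

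The crux is to squeeze the representing rays using quasiconvexity. Let $p_n=\pi_{F_n}(x_0)$, $c_n=[x_0,p_n]$, and let $c_{\xi_n},c_{\eta_n}\colon[0,\infty)\to X$ be the rays based at $x_0$ representing $\xi_n,\eta_n$. By Theorem~\ref{theorem:HRqc}(2) each $F_n\cup c_n$ is $L$-quasiconvex with $L$ independent of $n$. Approximating $\xi_n$ by points $q_k\in F_n$ with $q_k\to\xi_n$, each segment $[x_0,q_k]$ lies in $\N_L(F_n\cup c_n)$, and the limit ray $c_{\xi_n}$ therefore lies in this $L$-neighborhood as well. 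For $t<R_n-L$, the triangle inequality gives $d\bigl(c_{\xi_n}(t),F_n\bigr)\geq R_n-t>L$, so the nearest point in $F_n\cup c_n$ must lie on $c_n$, say at $c_n(s)$ with $d\bigl(c_n(s),c_{\xi_n}(t)\bigr)\leq L$. Comparing distances from $x_0$ yields $|s-t|\leq L$, hence $d\bigl(c_{\xi_n}(t),c_n(t)\bigr)\leq 2L$; the same bound holds for $c_{\eta_n}$. Thus
\[
d\bigl(c_{\xi_n}(t),c_{\eta_n}(t)\bigr)\leq 4L \qquad \text{for every } t\in[0,R_n-L).
\]

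To finish, convergence $\xi_n\to\xi$ in the cone topology translates to pointwise convergence $c_{\xi_n}\to c_\xi$ of the representing rays, uniform on compact sets, and likewise $c_{\eta_n}\to c_\eta$. For any fixed $t\geq 0$ we have $t<R_n-L$ for all large $n$, so passing to the limit yields $d\bigl(c_\xi(t),c_\eta(t)\bigr)\leq 4L$ for every $t$. But in a CAT(0) space two geodesic rays from a common basepoint that remain at bounded distance for all time are equal, so $c_\xi=c_\eta$ and $\xi=\eta$, contradicting $\xi\neq\eta$. The main obstacle is the quasiconvexity step: without the uniform Hruska--Ruane estimate one cannot control the clustering of initial segments of rays heading to distinct points of a far-away flat, and it is precisely this clustering that forces $\diam\bndry F_n\to 0$.
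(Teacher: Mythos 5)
Your argument is correct, but there is nothing in the paper to compare it against: the paper does not prove this proposition, it simply quotes it as a result of Hruska--Ruane \cite{HR1}. What you have written is a legitimate self-contained derivation from exactly the two ingredients the paper does import from \cite{HR1} and \cite{HK1}: the uniform quasiconvexity constant $L$ of Theorem \ref{theorem:HRqc}(2), and local finiteness of $\F$ (finitely many $\Gamma$-orbits of flats together with cocompact flat stabilizers), which gives $d(x_0,F_n)\to\infty$ along any infinite sequence of distinct flats. The heart of your proof -- that every based ray to a point of $\bndry F_n$ must $2L$-fellow-travel the segment $[x_0,\pi_{F_n}(x_0)]$ for time up to roughly $R_n$, so that $\diam\bigl(\bndry F_n\bigr)\to 0$ in the visual metric -- is the natural ``distant flats have small visual size'' argument and is in the same spirit as the source. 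Two small points worth tightening, neither a real gap: when you pass from the segments $[x_0,q_k]$ to the ray $c_{\xi_n}$ you should choose $q_k$ tending to $\xi_n$ in $\overline{X}$ (e.g.\ along the ray inside the convex set $F_n$ asymptotic to $\xi_n$) and work with the closed $L$-neighborhood of $F_n\cup c_n$, so the limit ray genuinely stays inside it; and since nullity a priori depends on the choice of metric, you should remark that any two metrics compatible with the cone topology on the compact space $\bndry X$ are uniformly equivalent, so the conclusion is metric-independent.
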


\medskip
Let $f\colon\bndry X\to \relbndry$ be as above. Note that $f$ is the decomposition map of the monotone and upper semi-continuous decomposition $\D$ of $\bndry X$ where $\D=\bigset{\bndry F}{F\in\F}\cup\bigset{\{x\}}{x\notin\bndry F\spc \text{for all} \spc F\in\F}$. By Proposition 3.5 of \cite{Hau17a} we have:

\begin{lemma}
\label{lemma:local cut pts to local cut pts}
Let $\xi\in \bndry X$ and assume that $\xi\notin\bndry F$ for any $F\in\F$. If $\xi$ is a local cut point, then $f(\xi)$ is a local cut point.
\end{lemma}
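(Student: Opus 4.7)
The proof should hinge on two facts about $f$: that it is induced by a monotone upper semi-continuous decomposition $\D$ (by Propositions \ref{proposition:uppersemi} and \ref{proposition:nullfamily}), and that the hypothesis $\xi\notin\bndry F$ for any $F\in\F$ forces $\{\xi\}$ to be an element of $\D$, so that $f^{-1}\bigl(f(\xi)\bigr)=\{\xi\}$. The strategy is to take a neighborhood $U$ of $\xi$ witnessing the local cut point property, shrink it to an open \emph{saturated} subneighborhood, and then pull an assumed connected join in $\relbndry$ back to a contradiction in $\bndry X$.

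First, fix a neighborhood $U$ of $\xi$ such that for every open neighborhood $V\subset U$ of $\xi$ there exist $y,z\in V\setminus\{\xi\}$ that are not contained in any connected subset of $U\setminus\{\xi\}$. Since $\D$ is monotone upper semi-continuous on the compact metric space $\bndry X$, the quotient $\relbndry$ is compact Hausdorff. Because $f^{-1}\bigl(f(\xi)\bigr)=\{\xi\}\subset U$, the point $f(\xi)$ is disjoint from the closed set $f(\bndry X\setminus U)$, so by normality of $\relbndry$ I can separate them by an open neighborhood $U'$ of $f(\xi)$. Setting $U_1=f^{-1}(U')$ produces an open saturated neighborhood of $\xi$ with $U_1\subset U$, and $f(U_1)=U'$ is an open neighborhood of $f(\xi)$.

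I claim $f(U_1)$ witnesses that $f(\xi)$ is a local cut point. Given any open $V'\subset f(U_1)$ containing $f(\xi)$, its preimage $f^{-1}(V')$ is an open saturated neighborhood of $\xi$ inside $U$, so the local cut point hypothesis on $\xi$ supplies $y,z\in f^{-1}(V')\setminus\{\xi\}$ not contained in any connected subset of $U\setminus\{\xi\}$. Because $\xi$ is the unique preimage of $f(\xi)$, the images $f(y)$ and $f(z)$ lie in $V'\setminus\{f(\xi)\}$. Suppose, for contradiction, that some connected set $C\subset f(U_1)\setminus\{f(\xi)\}$ contains both $f(y)$ and $f(z)$; then $f^{-1}(C)\subset U_1\setminus\{\xi\}\subset U\setminus\{\xi\}$ and contains $y$ and $z$.

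The crux is to verify that $f^{-1}(C)$ is itself connected, for this contradicts the choice of $y,z$. I expect this to be the main obstacle, and I would dispatch it by invoking the classical fact (see \cite{Daverman}) that a closed continuous surjection between compact Hausdorff spaces with connected fibres pulls connected sets back to connected sets. Since $\bndry X$ is compact and $\relbndry$ is Hausdorff, $f$ is automatically closed, and since $\D$ is monotone every fibre of $f$ is either a connected boundary $\bndry F$ or a singleton. Applying this principle to $C$ yields the contradiction and completes the proof. The remaining steps are essentially bookkeeping in decomposition theory, but this final invocation of monotone-map topology is the substantive content of the argument.
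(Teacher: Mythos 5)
Your argument is correct. One should note that the paper itself gives no in-text proof of this lemma: it sets up the decomposition-theoretic framework (the collection $\bigset{\bndry F}{F\in\F}$ is a null family, hence the decomposition $\D$ is monotone and upper semi-continuous) and then simply cites Proposition 3.5 of \cite{Hau17a}. Your write-up is a self-contained proof in exactly that framework, and every step checks out: since $\xi$ lies in no $\bndry F$, the singleton $\{\xi\}$ is an element of $\D$ and $f^{-1}\bigl(f(\xi)\bigr)=\{\xi\}$; since $\bndry X$ is compact and $\relbndry$ Hausdorff, $f$ is closed, so $U'=\relbndry\setminus f(\bndry X\setminus U)$ already gives the saturated shrinking (your appeal to normality is harmless but unnecessary); and the crux, that $f^{-1}(C)$ is connected for connected $C$, is the standard fact about closed monotone surjections -- valid for arbitrary (not just closed) connected $C$ because the restriction of a closed map to a full preimage $f^{-1}(C)\to C$ is again a closed surjection, hence a quotient map, and the saturation argument then applies; the fibres here are singletons or boundary spheres $\bndry F$ of flats of dimension at least $2$, hence connected. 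So your proof supplies, from the ingredients the paper assembles (Propositions \ref{proposition:uppersemi} and \ref{proposition:nullfamily} and Tran's description of $f$), the argument the paper outsources; it is presumably the same in spirit as the cited Proposition 3.5 of \cite{Hau17a}, and nothing in it is gapped.
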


\medskip

Now that we know that non-parabolic local cut points in $\bndry X$ get mapped to non-parabolic local cut points in $\relbndry$, the proof of Proposition \ref{proposition:Cutptcase1} follows almost immediately from Theorem \ref{theorem:splitting in relative}.

\medskip

\begin{proof}[Proof of Proposition 6.1] Let $\xi$ be a point in $\bndry X$ which is a local cut point which is not in that boundary of a flat. As $\CAT(0)$ groups with isolated flats are relatively hyperbolic, Proposition \ref{lemma:local cut pts to local cut pts} implies that there is a non-parabolic local cut point in $\relbndry$. Therefore we are done by Theorem \ref{theorem:splitting in relative}.\end{proof}



\section{Local Cut Points in the Boundary of a Flat}
\label{sec:inbndryF}

The goal of this section is to complete the proof of Theorem \ref{theorem:CutptTheorem} by showing that a point $\xi$ in the boundary of a flat cannot be a local cut point. We begin this section by defining basic neighborhoods ``of infinity'' in $Y=\bndry X\setminus \bndry F$ and provide a useful lemma. Then in Section \ref{subsec:bndryFnotlocalcutpt} we develop machinery required to prove that $\xi$ cannot be a local cut point. Throughout this section we will assume that $\bndry X$ is locally connected.

\subsection{Basic Neighborhoods in $Y$}
\label{subsec:NbrhdsinY}

Let $\xi$ be an element of $\bndry F$. Given a neighborhood $V(\xi,n,\epsilon)$ in the bordification of $X$, recall that $V_{\bndry}(\xi,n,\epsilon)$ is the restriction of $V(\xi,n,\epsilon)$ to points of $\bndry X$. Given a  boundary neighborhood $V_{\bndry}(\xi,n,\epsilon)$ we define $V_Y(\xi,n, \epsilon)$ to be the subset $V_{\bndry}(\xi,n,\epsilon)\backslash\bndry F$. Then $V_Y(\xi,n,\epsilon)$ is open in $Y$ with the subspace topology. Although it is somewhat of a misnomer $V_Y(\xi,n,\epsilon)$, will refer to $V_Y(\xi,n,\epsilon)$  as {\it a basic neighborhood of} $\xi$ {\it in} $Y$. Notice that these sets $V_Y(\xi,n,\epsilon)$ form a basis in the sense that given any open set $A$ consisting of an open neighborhood of $\xi$ in $\bndry X$ intersected with $Y$ we may find a $k>0$ large enough so that $V_Y(\xi, k,\epsilon)\subset A$. Lastly, the set $V(\xi, n, \epsilon)\cap F$ will be referred to as a {\it flat neighborhood } of $\xi$ and denoted $V_F(\xi,n,\epsilon)$. When there is no ambiguity about the parameters $n$ we will simply write $V_{\bndry}$, $V_Y$, and $V_F$. The following is a consequence of Lemmas \ref{lemma:bound1} and \ref{lemma:bound2}:

\begin{lemma}
\label{lemma:flat nbrhd for V}
Suppose $\epsilon>0$, $\xi\in\bndry F$, and $L>0$ is the quasiconvexity constant given by Lemma \ref{corollary:quasiconvex}. There is a $\delta=\delta(L,\epsilon)>\epsilon$ such that for any $n>0$ and $\eta\in V_Y(\xi,n, \epsilon)$ if $r\in\perp(F)$ is the orthogonal representative of $\eta$, then $r(0)\in V_F(\xi,n,\delta)$.
\end{lemma}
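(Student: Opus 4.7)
The plan is to mirror the argument of Lemma \ref{lemma:bound2}, but in reverse: compare the based ray representing $\eta$ with the geodesic in $F$ from $x_0$ to $r(0)$, and then use CAT(0) convexity of the distance function between two geodesics emanating from a common basepoint to transport the closeness of $c(n)$ and $\xi(n)$ into a bound on the distance between $\pi_n(r(0))$ and $\xi(n)$.

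First I would fix notation: let $c$ be the geodesic ray based at $x_0$ with $c(\infty)=\eta$, so that the hypothesis $\eta\in V_Y(\xi,n,\epsilon)$ provides $d\bigl(c(n),\xi(n)\bigr)<\epsilon$. Set $a=d\bigl(x_0,r(0)\bigr)$ and let $\alpha\colon[0,a]\to F$ denote the geodesic from $x_0$ to $r(0)$. Lemma \ref{lemma:bound1} supplies a constant $M=M(L)$, depending only on the quasiconvexity constant, with
\[
d\bigl(c(a),r(0)\bigr)=d\bigl(c(a),\alpha(a)\bigr)<M.
\]

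The principal case is $n\le a$, in which $\pi_n\bigl(r(0)\bigr)=\alpha(n)$. Since $\alpha$ and $c$ are both geodesics issuing from $x_0$, the CAT$(0)$ inequality makes $t\mapsto d\bigl(c(t),\alpha(t)\bigr)$ convex with value $0$ at $t=0$ and value less than $M$ at $t=a$, hence $d\bigl(c(n),\alpha(n)\bigr)\le (n/a)\,d\bigl(c(a),\alpha(a)\bigr)<M$. The triangle inequality then yields
\[
d\bigl(\pi_n(r(0)),\xi(n)\bigr)\le d\bigl(\alpha(n),c(n)\bigr)+d\bigl(c(n),\xi(n)\bigr)<M+\epsilon,
\]
so $r(0)\in V_F(\xi,n,M+\epsilon)$. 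The remaining case $n>a$ is handled in the same spirit as the analogous case in the proof of Lemma \ref{lemma:bound2}: CAT$(0)$ convexity forces $t\mapsto d\bigl(c(t),\xi(t)\bigr)$ to be non-decreasing, so $d\bigl(c(a),\xi(a)\bigr)<\epsilon$, and combining with Lemma \ref{lemma:bound1} gives $d\bigl(r(0),\xi(a)\bigr)<M+\epsilon$; transporting this bound along the geodesic $\xi$ in $F$ (or equivalently along the continuation of $\alpha$ past $r(0)$) absorbs the remaining distance $n-a$ and produces $d\bigl(\pi_n(r(0)),\xi(n)\bigr)<2(M+\epsilon)$.

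Setting $\delta=\delta(L,\epsilon):=2(M+\epsilon)$ then covers both cases simultaneously, and plainly $\delta>\epsilon$. The step I expect to require the most care is the second case: one must ensure that the bound produced depends only on $L$ and $\epsilon$ rather than on $n$ or $a$, which comes down to the same convexity estimate that underlies Lemma \ref{lemma:bound2} together with the asymptotic behavior of $c$ relative to $r\cup F$ provided by the quasiconvexity in Lemma \ref{corollary:quasiconvex}.
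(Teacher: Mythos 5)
Your overall strategy matches what the paper intends (the paper prints no argument, only the assertion that the lemma is ``a consequence of Lemmas \ref{lemma:bound1} and \ref{lemma:bound2}''), and your first case $n\le a$ is complete and correct: there $\pi_n\bigl(r(0)\bigr)=\alpha(n)$, convexity of the distance between the two geodesics issuing from $x_0$ gives $d\bigl(c(n),\alpha(n)\bigr)\le (n/a)\,d\bigl(c(a),\alpha(a)\bigr)<M$, and the triangle inequality yields the bound $M+\epsilon$.

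The gap is in the case $n>a$. The phrase ``transporting this bound along the geodesic $\xi$ \dots absorbs the remaining distance $n-a$'' presupposes that $n-a$ is bounded by a constant depending only on $L$ and $\epsilon$, and that is precisely what does not follow from ``the same convexity estimate that underlies Lemma \ref{lemma:bound2}.'' In Lemma \ref{lemma:bound2} the analogous bound is free: the hypothesis there concerns a point of $F$ at distance $a$ from $x_0$ which is $\epsilon$-close to $\eta(n)$, so $|n-a|<\epsilon$ by the triangle inequality. Here the hypothesis is $d\bigl(c(n),\xi(n)\bigr)<\epsilon$, where both points lie at distance exactly $n$ from $x_0$, so it says nothing about $a$ by itself, and neither convexity of $t\mapsto d\bigl(c(t),\xi(t)\bigr)$ nor of $t\mapsto d\bigl(c(t),\alpha(t)\bigr)$ supplies the bound. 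The missing ingredient is the orthogonality of $r$: since $r$ is orthogonal to the convex set $F$ one has $d\bigl(r(t),F\bigr)=t$, and since $c$ and $r$ are asymptotic, $t\mapsto d\bigl(c(t),r(t)\bigr)$ is convex and bounded, hence non-increasing, so $d\bigl(c(n),r(n)\bigr)\le d\bigl(x_0,r(0)\bigr)=a$. Hence $d\bigl(c(n),F\bigr)\ge n-a$, while $d\bigl(c(n),F\bigr)\le d\bigl(c(n),\xi(n)\bigr)<\epsilon$, so $n-a<\epsilon$. With that in hand, $d\bigl(r(0),\xi(n)\bigr)\le d\bigl(r(0),c(a)\bigr)+(n-a)+d\bigl(c(n),\xi(n)\bigr)<M+2\epsilon\le 2(M+\epsilon)$, so your choice $\delta=2(M+\epsilon)$ does work. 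One further remark: when $a<n$ we have $\pi_n\bigl(r(0)\bigr)=r(0)$, which strictly fails the condition $n<d\bigl(x_0,r(0)\bigr)$ in the definition of $V(\xi,n,\delta)$; the paper reads these neighborhoods loosely (exactly as in the $a<n$ case of Lemma \ref{lemma:bound2}), and your reading is consistent with that, but the conclusion should be understood as the distance estimate on $r(0)$ rather than literal membership.
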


\subsection{$\xi\in\bndry F$ cannot be a local cut point}
\label{subsec:bndryFnotlocalcutpt}

Recall that a point $\xi\in\bndry X$ is a {\it local cut point} if $X\setminus\{\xi\}$ is not one-ended. A path connected metric space is one-ended if for each compact $K$ there exists a compact $K'$ such that points outside of $K'$ can be connected by paths outside of $K$. In other words,  to show that $\xi$ is not a local cut point we need to show that for any neighborhood $U_{\bndry}$ of $\xi$, there exists a neighborhood $V_{\bndry}$ of $\xi$ such that all points of $V\backslash\{\xi\}$ can be connected by paths in $U_{\bndry}\backslash\{\xi\}$. Intuitively the idea is to show that we may connect two points close to $\xi$ up by a path which does not travel ``too far'' into $Y$.

\medskip
\medskip

In Section \ref{sec:GeometricAction} we saw that $Y$ admits a geometric action by $H=\bbz^n$; moreover, by  Proposition  \ref{proposition:fullrank} and \ref{lemma:Hperiodic} we know that $Y$ consists of finitely many components whose stabilizers are $\bbz^n$ subgroups of full rank. So the components of $Y$ coarsely look like $\bbz^n$ and this particularly nice structure will help us control the length of paths in $\bndry X$ near $\xi$. 

 The majority of the arguments in this section only concern the action of $\bbz^n$ on a single connected component; therefore, we may assume for now that $Y$ consists of a single connected component. A reader only interested in the proof of Theorem \ref{theorem:MainTheorem} may wish to focus on the simple case when $n=2$, as this is an intuitively simpler case.
 
 Also, recall that our main concern is $1$-dimensional boundaries so the reader may wish to think of $n$ as being equal to $2$ for intuitive purposes; however, the following arguments do not require that assumption.

\begin{lemma}
\label{lemma:pathdisc}
Let $D>0$ and $y\in Y$. Then there exists an $M>0$ such that points of the ball $B(y,D)$ may be connected by paths in $B(y, M)$. 
\end{lemma}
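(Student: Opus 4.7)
The plan is to exploit the fact that the $H$-equivariant metric on $Y$ constructed in Section \ref{sec:GeometricAction} is a length metric on each connected component of $Y$, so that nearby points can be joined by short paths which, by the triangle inequality, must stay near the basepoint. First I observe that since the $H$-equivariant metric on $Y$ was built by declaring the distance between points in distinct components of $Y$ to be infinite, the ball $B(y,D)$ is contained entirely in the connected component $C$ of $y$, so it suffices to work inside $C$.

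The key fact I will use is that the metric on $C$ is the pullback length metric of the geodesic metric on the Peano continuum $Q = C/\stab_H(C)$ furnished by Bing's theorem. By the definition of a length metric, for any two points $z_1, z_2 \in C$ and any $\epsilon > 0$ there exists a rectifiable path $\gamma$ in $C$ from $z_1$ to $z_2$ of length at most $d(z_1, z_2) + \epsilon$.

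Given any pair $z_1, z_2 \in B(y, D)$, the triangle inequality gives $d(z_1, z_2) \leq 2D$. I will fix $\epsilon = 1$ and choose a path $\gamma \colon [0, L] \to C$ from $z_1$ to $z_2$ parametrized by arc length, with $L \leq 2D + 1$. For any $t \in [0, L]$, applying the triangle inequality through each of the two endpoints yields
\[
d\bigl(y, \gamma(t)\bigr) \leq D + \min(t, L - t) \leq D + \tfrac{L}{2} \leq 2D + \tfrac{1}{2}.
\]
Hence $\gamma$ is contained in $B(y, M)$ with $M = 2D + 1$, which proves the lemma.

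The only step requiring substantial work is the construction of the $H$-equivariant length metric on $C$, which Section \ref{sec:GeometricAction} has already accomplished via Bing's theorem and the pullback length metric construction. Once that machinery is in place, the lemma follows immediately from the triangle inequality in a length space, and I expect no further obstacle to arise.
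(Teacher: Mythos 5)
Your proof is correct, but it takes a genuinely different route from the paper. The paper's own argument never uses that $d_Y$ is intrinsic: it argues from local connectedness of $\bndry X$ that $\overline{B}(y,D)$ has only finitely many components, joins those components by finitely many paths in $Y$, and takes $M=D+N$ where $N$ bounds the diameters of the chosen paths; since this $M$ a priori depends on $y$, the paper then needs the $H$-equivariance of the metric (Corollary \ref{corollary:transofpathdisc}) to get a uniform constant along an orbit. You instead exploit the stronger structural fact, already asserted in Section \ref{sec:GeometricAction} via the citation to \cite{P} (Proposition 3.4.7), that the pulled-back metric on each component of $Y$ is a length metric (in particular finite within a component, and infinite between components, so $B(y,D)$ lies in one component). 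Granting that, your argument is the standard one: pick a path of $d_Y$-length at most $d_Y(z_1,z_2)+1\leq 2D+1$, parametrize by arc length, and the triangle inequality from whichever endpoint is closer keeps the path in $B(y,2D+1)$. What your approach buys is an explicit bound $M=2D+1$ that is uniform in $y$ (and in the component), so Corollary \ref{corollary:transofpathdisc} becomes immediate without invoking the group action at all; what the paper's approach buys is independence from the length-space property of $d_Y$, needing only its existence together with local connectedness. The only point worth making explicit in your write-up is the finiteness of $d_Y$ on each component, which is part of what ``$\tilde{d}$ is a length metric'' means in the cited proposition and follows from the covering-space construction, so there is no gap.
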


\begin{proof}If $\overline{B}(y, D)$ is connected, then $M=D$ and we are done. So, assume $\overline{B}(y, D)$ is not connected. Then $\overB(y, D)$ must contain only finitely many path components, contradicting local connectedness of $\bndry X$. Let $\mathcal{A}$ be the set of components of $\overB(y, D)$ and assume $|\mathcal{A}|=n$. Then for any pair of components $A_1,A_2\in\mathcal{A}$ with $A_1\neq A_2$ we may find a path $\gamma$ in $Y$ with $\gamma(0)\in A_1$ and $\gamma(1)\in A_2$. Let $P$ be the collection of all such paths. Then $|P|=\binom{n}{2}<\infty$. For every $\gamma\in P$ we have that $\diam(\gamma)<\infty$, so set $N=\max\bigset{\diam(p)}{p\in P}$. Then $\overB(y,D)\cup P$ is connected and has diameter $\leq D+N$. Set $M=D+N$.\end{proof}

\medskip
\begin{corollary}
\label{corollary:transofpathdisc}
Let $D>0$, and $y_0\in Y$. Then there is an $M>0$ such that for any $y\in \orb_H(y_0)$ $B(y, D)$ is path connected inside of $B(y, M)$.
\end{corollary}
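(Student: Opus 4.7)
The plan is to reduce this immediately to Lemma \ref{lemma:pathdisc} (the lemma stated just above the corollary) by invoking the $H$-equivariance of the metric on $Y$ established in Corollary 5.10. Since $Y$ is equipped with an $H$-equivariant metric $d_Y$, every element $h \in H$ acts as an isometry on $Y$. Isometries carry metric balls to metric balls of the same radius, so for any $h \in H$ we have $h \cdot B(y_0, D) = B(h \cdot y_0, D)$ and $h \cdot B(y_0, M) = B(h \cdot y_0, M)$.

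First, I would apply Lemma \ref{lemma:pathdisc} to the point $y_0$ and the constant $D$ to obtain some $M > 0$ such that every pair of points in $B(y_0, D)$ can be joined by a path contained in $B(y_0, M)$. I claim this same $M$ works at every translate. Given any $y = h \cdot y_0 \in \orb_H(y_0)$ and any two points $p, q \in B(y, D)$, write $p = h \cdot p_0$ and $q = h \cdot q_0$ with $p_0, q_0 \in B(y_0, D)$. Choose a path $\gamma \colon [0,1] \to B(y_0, M)$ from $p_0$ to $q_0$. Then $h \circ \gamma$ is a continuous path from $p$ to $q$, and since $h$ is an isometry its image lies in $h \cdot B(y_0, M) = B(y, M)$. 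Hence $B(y, D)$ is path connected inside $B(y, M)$, as desired.

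There really is no obstacle here: the entire content of the corollary is that the constant $M$ can be chosen uniformly over the $H$-orbit of $y_0$, and this is exactly what $H$-equivariance of the metric delivers for free. The substantive work was done in Lemma \ref{lemma:pathdisc} (where local connectedness of $\bndry X$ and the finiteness of the component structure of the closed ball were used) and in Section \ref{sec:GeometricAction} (where the $H$-equivariant metric was constructed).
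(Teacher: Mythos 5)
Your argument is correct and is exactly the argument the paper intends: its proof simply states that the corollary "follows immediately from the lemma and the geometric action of $H$ on $Y$," and your write-up spells out that immediacy — apply Lemma \ref{lemma:pathdisc} at $y_0$ to get $M$, then transport balls and paths by the isometries coming from the $H$-equivariant metric of Corollary 5.10.
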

\medskip

\begin{proof} This follows immediately from the lemma and the geometric action of $H$ on $Y$.\end{proof}

In the remainder of this section $M$ shall refer to the constant found in Corollary \ref{corollary:transofpathdisc}.

\begin{lemma}
\label{lemma:path lemma}
Let $V_Y=V_Y(\xi, n,\epsilon)$ be a basic neighborhood of $\xi$ in $Y$. Then there exists a metric neighborhood $\N(V_Y)$ of $V_Y$ in $Y$ such that points in $V_Y$ can be connected by paths inside $\N(V_Y)$.

\end{lemma}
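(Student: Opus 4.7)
The strategy is to reduce the path-connectivity question in $V_Y$ to a discrete path-construction in the flat $F$, using the $H$-action (with $H\cong\bbz^n$) and the quasi-isometry relation $\R$ of Corollary~\ref{corollary:qirelation}.

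First, I would transport points to the flat. By Lemma~\ref{lemma:flat nbrhd for V}, every $\eta\in V_Y(\xi,n,\epsilon)$ has a unique orthogonal representative $r_\eta\in\perp(F)$ whose base point lies in the flat neighborhood $V_F(\xi,n,\delta)$. The region $V_F$ is a cone-shaped subset of $F$ (points outside $\overline{B}(x_0,n)$ whose radial direction from $x_0$ lies in an angular window around the ray representing $\xi$); in particular it is path-connected and unbounded in the direction of $\xi$, and it is convex far enough from the excluded ball.

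Second, I would fix uniform local constants. Pick $y_0\in Y$, let $x_0\in F$ be the basepoint (with $Hx_0$ cocompact in $F$), and fix a finite generating set for $H$. Generator-translates then give uniform bounds $d(gx_0,gsx_0)\le C_0$ in $F$ and $d_Y(gy_0,gsy_0)\le D_0$ in $Y$ for all $g\in H$ and generators $s$. Applying Corollary~\ref{corollary:transofpathdisc} with $D=D_0$ produces a constant $M>0$ such that every ball $B(hy_0,D_0)$ is path-connected inside $B(hy_0,M)$.

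Third, I would assemble the path. Given $\eta_1,\eta_2\in V_Y$ with base points $p_i=r_{\eta_i}(0)\in V_F$, choose $h_i\in H$ with $d(h_ix_0,p_i)\le A$; Corollary~\ref{corollary:qirelation} then gives $d_Y(h_iy_0,\eta_i)\le B'$ for a uniform constant $B'$. Using the cone-like, path-connected structure of $V_F$ together with the cocompactness of $Hx_0$ in $F$, I would construct a sequence $h_1=g_0,g_1,\dots,g_k=h_2$ in $H$ with consecutive elements differing by a generator and every $g_ix_0$ lying inside a fixed $C_0$-thickening of $V_F$ in $F$ (concretely, one can first travel out along the direction of $\xi$ where $V_F$ is wide, then cross over, then come back in). Applying the quasi-isometry once more, every $g_iy_0$ lies in a uniform metric neighborhood $\N_R(V_Y)$ of $V_Y$. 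Chaining together the path-connected $M$-balls at consecutive $g_iy_0$, and joining $\eta_i$ to $h_iy_0$ by the same device, yields a path in $Y$ from $\eta_1$ to $\eta_2$ contained in the $(R+M)$-neighborhood of $V_Y$, as required.

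The main technical obstacle will be verifying that the orbit word $(g_i)$ can be chosen with all $g_ix_0$ confined to a bounded thickening of $V_F$ whose thickness is independent of $\eta_1,\eta_2$; this requires a mild geometric argument in $F$ exploiting that $V_F$ is a convex cone (away from the excluded ball) and that sufficiently deep points on the axial ray representing $\xi$ see all of $V_F$ through segments lying in $V_F$, so one can always detour through such a deep point before returning.
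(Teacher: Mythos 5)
Your construction follows the same skeleton as the paper's proof (pass to orthogonal base points in $V_F$ via Lemma \ref{lemma:flat nbrhd for V}, build a discrete chain in the flat, transfer to $Y$ with Corollary \ref{corollary:qirelation}, and chain the uniformly path-connected balls of Corollary \ref{corollary:transofpathdisc}), but there is a genuine gap at the step ``applying the quasi-isometry once more, every $g_iy_0$ lies in a uniform metric neighborhood $\N_R(V_Y)$ of $V_Y$.'' The quasi-isometry of Corollary \ref{corollary:qirelation} only compares $d_Y$-distances of boundary points with distances between their orthogonal base points; it does not say that a flat point lying in (a bounded thickening of) $V_F(\xi,n,\delta)$ corresponds to a boundary point $d_Y$-close to $V_Y(\xi,n,\epsilon)$. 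Membership in $V_Y$ is a cone-topology condition measured at radius $n$, not a coarse condition: a Lemma \ref{lemma:bound2}--type estimate only places the boundary points over your chain in an \emph{enlarged} basic neighborhood $V_Y(\xi,n,\delta')$ with $\delta'>\epsilon$, and such an enlarged cone neighborhood is in general not contained in any bounded $d_Y$-neighborhood of $V_Y$. Indeed, base points of points of $V_Y(\xi,n,\epsilon)$ lie in a cone of angular width roughly $(\epsilon+6L)/n$, while points of $V_Y(\xi,n,\delta')$ can have base points near the edge of the wider $\delta'/n$-cone at arbitrarily large distance $R$ from $x_0$; the Euclidean gap between these grows linearly in $R$, so by the lower bound in Corollary \ref{corollary:qirelation} their $d_Y$-distance to $V_Y$ is unbounded. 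So your final containment claim is unjustified (and likely false), and the ``mild geometric argument'' you defer cannot repair it by routing the chain more carefully, since any chain joining two given base points must live in some fixed cone wider than the one carrying $V_Y$.

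The paper avoids this by not claiming containment in a metric thickening of $V_Y$ itself: it sets $V_Y'=V_Y(\xi,n,\delta+A+K)$ (enlarging the cone parameter, not the metric) and takes $\N(V_Y)=N_M(V_Y')$, i.e.\ an $M$-thickening of the \emph{enlarged basic} neighborhood. This precise form is what gets used later: in Theorem \ref{theorem:flat not cut point} one fixes $\kappa=\delta+A+K$ and the constant $M$, and then chooses $n$ large enough that $V_{\bndry}(\xi,n,\kappa)$ avoids $\overline{N}_M(K)$, so the connecting paths miss the given compact set. To fix your write-up, replace the assertion that the $g_iy_0$ lie in $\N_R(V_Y)$ with the statement that they lie in $V_Y(\xi,n,\delta')$ for an explicit $\delta'$ (via the Lemma \ref{lemma:bound2} argument), and conclude that the path lies in $N_M\bigl(V_Y(\xi,n,\delta')\bigr)$; this weaker conclusion is exactly what the downstream argument needs.
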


\begin{proof} Let $\mu, \nu\in V_{\bndry}$ and $u(0),v(0)$  be the base points of elements $u$ and $v$ of $\perp(F)$ representing $\mu$ and $\nu$ respectively. By Lemma \ref{lemma:flat nbrhd for V} there exists a $\delta>0$ such that $u(0)$ and $v(0)$ are in $V_F(\xi,n,\delta)$. As $V_F(\xi,n,\delta)$ is a sector of an embedded Euclidean plane, there exists a path $\gamma$ in $V_F(\xi,n,\delta)$ connecting $u(0)$ and $v(0)$. Define $A>0$ to be the constant from the definition of the relation $\R$ in Section \ref{sec:ProperCocompact}. We may find a finite sequence of points $(a_i)_{i=0}^n$ contained in the image of $\gamma$ with $a_0=u(0)$, $a_n=v(0)$, and such that $\gamma$ is contained in $\bigcup_{i=0}^{n}\overline{B}(a_i, A)$. By choice of $A$, for each $a_i$ we may find $c_i\in\perp(F)$ with $c_0=a_0$ and $c_n=a_n$, and such that $d_X\big(c_i(0),a_i\big)<A$. This implies that $d_X\big(c_i(0),c_{i+1}(0)\big)<4A$.

The boundary points $c_i(\infty)$ need not be in $V_Y$, but using an argument similar to that of Lemma \ref{lemma:bound2} one sees that they are in $V_Y'=V_Y(\xi, n,\delta+A+K)$ for some $K$. From Lemma \ref{lemma:flat nbrhd for V} we have that $\delta>\epsilon$, so we see that $V_Y\subset V_Y'$.

Recall that Corollary \ref{corollary:qirelation} gives an $(L,C)$-quasi-isometry associated to $\R$, which implies that $d_Y\big(c_i(\infty),c_{i+1}(\infty)\big)< L(4A)+C$. Fixing a base point $y_0$ in $Y$ we know that $H\cong\bbz^n$ acts cocompactly on $Y$, so there exists a constant $J$ and points $\{y_1,...,y_2\}\subset \orb_H(y_0)$ such that for every $i$ we have $d_Y\big(y_i,c_i(\infty)\big)\leq J$. Setting $D=L(4A)+C+J$, we see that the neighborhoods $\overline{B}(y_i, D)$ form a chain from $\mu=c_1(\infty)$ to $\nu=c_n(\infty)$. Corollary \ref{corollary:transofpathdisc} tells us that we may find a constant $M>0$ such that $\bigcup\overline{B}(y_i,D)$ is connected by paths in $N_M\big(\bigcup\overline{B}(y_i,D)\big)$. Therefore, $\mu$ and $\nu$ are connected by a path in $\N(V_Y)=N_M(V_Y')$.
\end{proof}

\begin{figure}[h!]
\includegraphics[scale=.65]{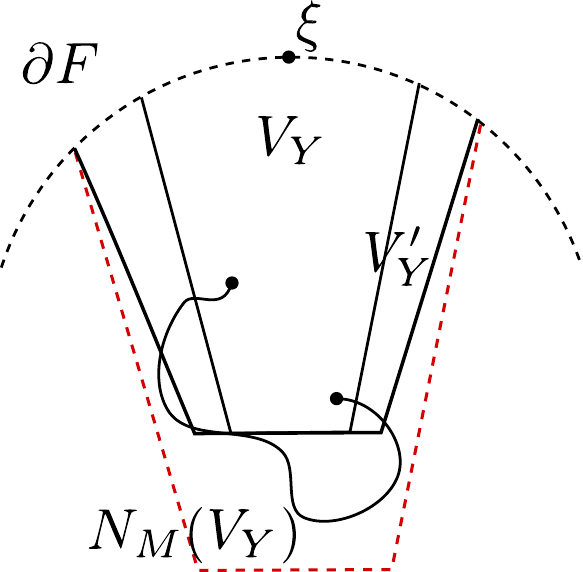}
\captionof{figure}{This figure depicts the neighborhood $\N(V_Y)=N_M(V_Y')$.}
\label{fig: NeigborhoodNV}
\end{figure}

Although it is not truly a neighborhood (in the sense that it is not open in $\bndry X$), we will use $\N(V_{\bndry})$ to denote $\N(V_Y)\cup V_{\bndry}$. In other words, $\N(V_{\bndry})$ is $N(V_Y)$ with $V_{\bndry}\cap\bndry F$ attached.

\medskip
\begin{corollary}
\label{corollary: connecting points near xi}
Let $V_Y$ be a basic neighborhood of $\xi$ in $Y$. Then any two points in $V_{\bndry}\setminus\{\xi\}$ can be connected by paths in $\N(V_{\bndry})\setminus\{\xi\}$.
\end{corollary}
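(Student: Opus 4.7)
The plan is to reduce to Lemma \ref{lemma:path lemma}. Given $\mu,\nu\in V_{\bndry}\setminus\{\xi\}$, the idea is first to connect each of $\mu,\nu$ to a point in $V_Y$ by a short path that stays inside $V_{\bndry}\setminus\{\xi\}$, and then to apply Lemma \ref{lemma:path lemma} to join those two intermediate points by a path in $\N(V_Y)$.

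To carry this out I would use that $\bndry X$ is a Peano continuum: it is a compact, connected (the no-splitting hypothesis rules out more than one end), locally connected metric space. By Bing's Theorem \ref{theorem:Bing's Theorem} it admits a convex metric, equivalent to the original, in which small balls are path-connected; in particular $\bndry X$ is locally path-connected. Since $V_{\bndry}\setminus\{\xi\}$ is open in $\bndry X$, I can choose a path-connected open neighborhood $W_\mu$ of $\mu$ with $W_\mu\subset V_{\bndry}\setminus\{\xi\}$, and similarly $W_\nu$ for $\nu$. If $\mu\in V_Y$, set $\mu'=\mu$; otherwise $\mu\in\bndry F\cap V_{\bndry}$, and Corollary \ref{corollary:limitsetcorollary} guarantees that $W_\mu$ meets $Y$, so I may pick $\mu'\in W_\mu\cap Y\subset V_{\bndry}\cap Y=V_Y$. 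There is then a path from $\mu$ to $\mu'$ inside $W_\mu\subset V_{\bndry}\setminus\{\xi\}\subset\N(V_{\bndry})\setminus\{\xi\}$, and the same construction produces $\nu'\in V_Y$ together with a path from $\nu$ to $\nu'$ in $W_\nu$.

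With $\mu',\nu'\in V_Y$ in hand, Lemma \ref{lemma:path lemma} yields a path from $\mu'$ to $\nu'$ inside $\N(V_Y)$. Because $\N(V_Y)\subset Y$ and $\xi\in\bndry F$, this middle path automatically avoids $\xi$. Concatenating the three paths produces a path from $\mu$ to $\nu$ in $\N(V_{\bndry})\setminus\{\xi\}=(\N(V_Y)\cup V_{\bndry})\setminus\{\xi\}$, as required. The only mildly delicate step is the local path-connectedness of $\bndry X$, but this is immediate from the Peano continuum structure already used elsewhere in the paper; everything else is a direct bookkeeping argument on top of Lemma \ref{lemma:path lemma} and Corollary \ref{corollary:limitsetcorollary}.
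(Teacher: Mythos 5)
Your argument is correct and is essentially the paper's own proof: both reduce to Lemma \ref{lemma:path lemma} by using local path-connectedness of $\bndry X$ together with Corollary \ref{corollary:limitsetcorollary} to replace any point of $\bndry F\cap V_{\bndry}\setminus\{\xi\}$ by a nearby point of $V_Y$ via a short path inside $V_{\bndry}\setminus\{\xi\}$, then concatenate; the paper merely organizes this as three cases, while you phrase it uniformly and supply (via Bing's theorem) the local path-connectedness that the paper simply asserts.
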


\begin{proof} We have three 3 cases to check. First assume that $\mu,\nu\in V_Y$. Then by \ref{lemma:path lemma} we have that $\mu$ and $\nu$ can be connected by a path in $\N(V_Y)$.

Second, suppose that $\mu,\in V_Y$ and $\nu\in\bndry (F\cap V_{\bndry})\setminus\{\xi\}$. We know that $\bndry X$ is locally path connected. As $V_{\bndry}$ is a basic neighborhood of $\xi$ in $\bndry X$, we may find a path connected basic $\bndry X$-neighborhood $U$ of $\nu$ in $V_{\bndry}\setminus\{\xi\}$. By Corollary \ref{corollary:limitsetcorollary} and choice of $U$ we have that $U\cap V$ is non-empty, we may find a point $\rho\in U\cap V_Y$ which is connected to $\nu$ by a path in $U$. Thus we may apply the first case to connect $\mu$ to $\rho$ by a path in $\N(V_Y)$. By concatenating these paths we complete case two.

Lastly, if $\mu$ and $\nu$ are both in $\bndry (F\cap V_{\bndry})\setminus\{\xi\}$ we may pick a point in $V_Y$ and apply the second case twice.\end{proof}

\medskip


\begin{figure}[h!]
\includegraphics[scale=.6]{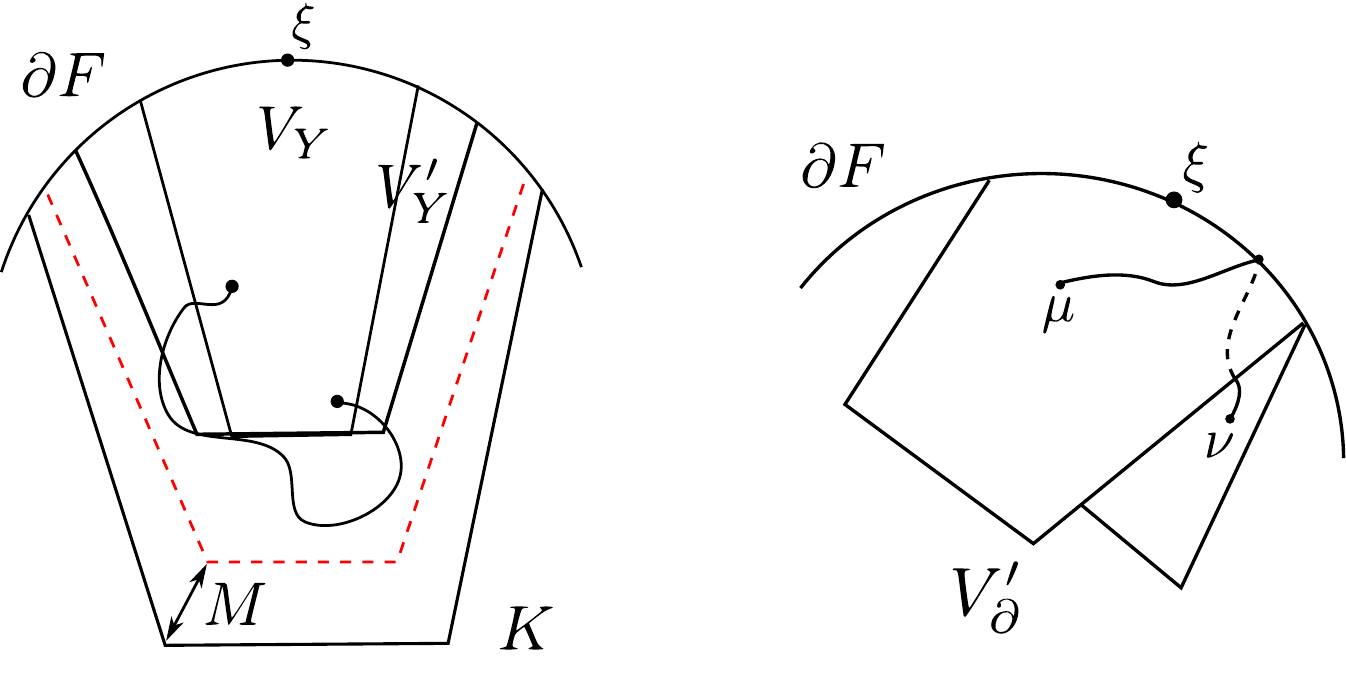}
\captionof{figure}{The figure on the left illustrates the $M$-neighborhood of the compact set $K$ and a path contained in the compliment of its closure connecting two points of $V_Y$. The figure on the right depicts two points near $\xi$ in different components connected by a path in $\bndry X$.}
\label{fig: not local cut point}
\end{figure}

We now disregard the hypothesis that $Y=\bndry X\setminus\bndry F$ consists of a single connected component and prove the main result of this section.

\begin{theorem}
\label{theorem:flat not cut point}
A point $\xi$ in the boundary of a flat cannot be a local cut point.
\end{theorem}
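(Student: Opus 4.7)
The plan is to use the characterization of non-local-cut-points recalled in Section 2.4: given any open neighborhood $U$ of $\xi$ in $\bndry X$, I need to exhibit a smaller neighborhood $V$ of $\xi$ such that every pair of points in $V\setminus\{\xi\}$ can be joined inside $U\setminus\{\xi\}$. Corollary \ref{corollary: connecting points near xi} already produces such joining paths: for any basic neighborhood $V_{\bndry}(\xi,n',\epsilon')$, points of $V_{\bndry}(\xi,n',\epsilon')\setminus\{\xi\}$ are connected by paths inside the enlargement $\N(V_{\bndry}(\xi,n',\epsilon'))\setminus\{\xi\}$. The proof therefore reduces to choosing the parameters $(n',\epsilon')$ so that $\N(V_{\bndry}(\xi,n',\epsilon'))$ still fits inside $U$.

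To accomplish this I would track how the operation $V_{\bndry}\mapsto\N(V_{\bndry})$ inflates a cone-topology neighborhood of $\xi$. By the construction in Lemma \ref{lemma:path lemma}, $\N(V_Y)$ is an $M$-metric-neighborhood (in the $H$-equivariant metric on $Y$) of a slightly widened basic neighborhood of $\xi$ in $Y$. Given $\rho\in\N(V_Y)$, pick an orthogonal representative $s\in\perp(F)$ of $\rho$ together with a companion $\eta$ in that widened basic neighborhood that lies within $d_Y$-distance $M$ of $\rho$, and an orthogonal representative $r$ of $\eta$. Corollary \ref{corollary:qirelation} converts the $d_Y$-bound into a bound $d_F\bigl(s(0),r(0)\bigr)\le LM+C$ between base points in the flat $F$. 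Lemma \ref{lemma:flat nbrhd for V} places $r(0)$ in a flat neighborhood $V_F(\xi,n',\delta')$ of $\xi$, so $s(0)$ lies in an $(LM+C)$-thickening of that flat neighborhood. The Euclidean geometry of $F$ contains this thickening in a slightly shorter and slightly wider flat neighborhood $V_F\bigl(\xi,n'-(LM+C),\,\delta'+LM+C\bigr)$, and Lemma \ref{lemma:bound2} then lifts this information back to the cone topology, giving $\rho=s(\infty)\in V_{\bndry}\bigl(\xi,\,n'-(LM+C),\,\Delta\bigr)$ for a constant $\Delta$ that depends only on $\epsilon'$ and on the structural constants $L,M,A,C,K$, not on $n'$.

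With $\epsilon'$ fixed once and for all, $\Delta$ is then a fixed constant, and the family $\{V_{\bndry}(\xi,n,\Delta)\}_{n>0}$ forms a cone-topology neighborhood base at $\xi$. For the given $U$, choose $N$ so that $V_{\bndry}(\xi,N,\Delta)\subset U$, and then take $n'\ge N+(LM+C)$; the inclusion $\N(V_{\bndry}(\xi,n',\epsilon'))\subset V_{\bndry}(\xi,N,\Delta)\subset U$ follows, completing the argument. The step I expect to be the main obstacle is the parameter bookkeeping in the middle paragraph: one must verify that the inflation of the cone-topology $\epsilon$-parameter produced by passing through $F$, applying the quasi-isometry of Corollary \ref{corollary:qirelation}, and returning through $\perp(F)$ is bounded by a constant independent of how deeply one probes toward $\xi$, so that the natural shrinking of cone neighborhoods as $n'\to\infty$ eventually absorbs that bounded inflation.
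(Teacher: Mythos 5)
Your proposal is correct, and it shares the paper's skeleton: everything is reduced to Corollary \ref{corollary: connecting points near xi}, and the heart of the matter is that the enlargement $V_{\bndry}\mapsto\N(V_{\bndry})$ built in Lemma \ref{lemma:path lemma} inflates the parameters by amounts that do not depend on the depth parameter $n$. Where you genuinely diverge is the endgame. The paper works with the ends formulation: given a compact $K\subset\bndry X\setminus\{\xi\}$, it chooses $n$ so large that the widened neighborhood $V_Y(\xi,n,\kappa)$ misses $\overline{N}_M(K)$, so that its $M$-neighborhood automatically misses $K$; it never needs to know that $\N(V_{\bndry})$ is itself contained in a cone-topology neighborhood of $\xi$. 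You instead prove that explicit containment, $\N\bigl(V_{\bndry}(\xi,n',\epsilon')\bigr)\subset V_{\bndry}(\xi,n'-D,\Delta)$ with $D$ and $\Delta$ independent of $n'$, by pulling a point of $\N(V_Y)$ back through an orthogonal representative, Corollary \ref{corollary:qirelation}, the Euclidean geometry of $F$, and Lemma \ref{lemma:bound2}, and then invoking the neighborhood-base property of the sets $V_{\bndry}(\xi,n,\Delta)$. This works: the lower quasi-isometry inequality gives $d\bigl(s(0),r(0)\bigr)\le L(M+C)$ (not $LM+C$, which is immaterial), $1$-Lipschitzness of projection to balls together with convexity gives the flat inclusion with constant $\delta'+D$, and Lemma \ref{lemma:bound2} lifts this to the cone topology with $\Delta$ depending only on structural constants and $\epsilon'$. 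Your route is more quantitative (it records exactly how much $\N$ inflates cone neighborhoods) at the price of the parameter bookkeeping you flag; the paper's $\overline{N}_M(K)$ device sidesteps that bookkeeping entirely. One small caveat: Corollary \ref{corollary: connecting points near xi} is established under the running assumption that $Y$ has a single component, which is why the paper's proof of Theorem \ref{theorem:flat not cut point} adds the step of routing points lying in different components of $V_Y$ through $(\bndry F\cap V_{\bndry})\setminus\{\xi\}$; your reduction should include (or cite) that same step, after which nothing further is missing.
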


\begin{proof}
Let $K$ be a compact subset of $\bndry X\setminus \{\xi\}$. Recalling the discussion at the beginning of this subsection, we need to find a compact set $L$ such that points outside of $L$ can be connected by paths outside of $K$. Let $\epsilon,\delta,A$ and $K$ be as in Lemma \ref{lemma:path lemma} and set $\kappa=\delta+A+K$. The sets $V_{\bndry}(\xi,n,\kappa)$ form a neighborhood base so we may find an $n>0$ large enough so that \[V_Y(\xi,n,\kappa)\subset V_{\bndry}(\xi,n,\kappa)\subset Y\setminus\overline{N}_M(K),\] where $M>0$ is the constant found in \ref{lemma:path lemma}. Let $V_{\bndry}'=V_{\bndry}(\xi,n,\kappa)$. Notice that $\epsilon<\kappa$ implies that $V_{\bndry}=V_{\bndry}(\xi,n,\epsilon)$ is contained in $V_{\bndry}'$ and consequently $Y\setminus\overline{N}_M(K)$ (see Figure \ref{fig: not local cut point}). Define $L=\bndry X\setminus V_{\bndry}$.

We need that that points in $V_{\bndry}\setminus\{\xi\}$ can be connected by paths outside of $K$. Let $\mu, \nu\in V_{\bndry}\setminus\{\xi\}$. 

If $\mu$ and $\nu$ are in $(\bndry F\cap V_{\bndry})\setminus\{\xi\}$ or in the same component of $V_Y=V_Y(\xi,n,\epsilon)$, then Corollary \ref{corollary: connecting points near xi} tells us that they can be connected a path which misses $K$. So, if $\mu$ and $\nu$ are in different components of $V_Y$ we may connect them by passing through a point of $(\bndry F\cap V_{\bndry})\setminus\{\xi\}$ (see Figure \ref{fig: not local cut point}). Thus, $\mu$ and $\nu$ can be connected by path outside of $K$.\end{proof}

\medskip

Combining this theorem with Proposition \ref{proposition:Cutptcase1} we have completed the proof of Theorem \ref{theorem:CutptTheorem}.

\section{Proof of the Main Theorem}
\label{section: Main Theorem}

The goal of this section is to prove Theorem \ref{theorem:MainTheorem}, but first we review a few facts about the dynamics of the action of $\Gamma$ on $\bndry X$.

\subsection{Tits Distance and the Dynamical Properties of $\Gamma\lacton\bndry X$}

Recall that a {\it line} in $X$ is a 1-flat. A {\it flat half\,plane} is a subspace of $X$ isometric to the Euclidean half-plane, i.e $\{y\geq 0\}$ in the $(x,y)$-plane. A line, $L$, is called {\it rank one} if it does not bound a flat half-plane. By $L(\xi,\eta)$ we denoted the line with endpoints $\xi$ and $\eta$ in $\bndry X$. $L$ is said to be {\it $\Gamma$-periodic} if there is an arc-length parametrization of $\sigma$ of $L$, an element $\gamma\in\Gamma$, and a constant $\alpha>0$ such that $\gamma\sigma(t)=\sigma(t+\alpha)$ for all $t\in\bbr$.

We will need the following two results regarding rank one geodesics and dynamics of the action on the boundary.  The first can be found in Section III.3 of \cite{Ballman} and the second may be found as Proposition 1.10 of \cite{BB1}.

\begin{lemma}
\label{lemma: rankonedynmaics}
Suppose $L$ is an oriented rank one line shifted by an axial isometry $g$. Let $\xi$ and $\eta$ be the end points of $L$. Then for all neighborhoods $U$ of $\xi$ and $V$ of $\eta$ in $\overline{X}$ there exists $n\geq 0$ such that $g^k(\overline{X}\backslash V)\subset U$ and $g^{-k}(\overline{X}\backslash U)\subset V$ for every $k\geq n$.
\end{lemma}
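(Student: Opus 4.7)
The claim is the standard north--south dynamics statement for the axial isometry $g$, and the plan is a compactness--contradiction argument exploiting the contracting property of the rank one axis $L$. First, applying the desired conclusion to $g^{-1}$ swaps the roles of $\xi$ and $\eta$ and of $U$ and $V$, so the two inclusions are formally equivalent and it suffices to prove $g^k(\overline{X}\setminus V)\subset U$ for all $k\geq n$. Normalize so that $L$ is parameterized by an axis $\sigma\colon\bbr\to X$ with $\sigma(-\infty)=\eta$, $\sigma(+\infty)=\xi$, and $g\sigma(t)=\sigma(t+\alpha)$ for some $\alpha>0$. Fix the basepoint for the cone topology to be $x_0=\sigma(0)$, so that $g^{k}x_0=\sigma(k\alpha)\to\xi$ and $g^{-k}x_0\to\eta$.

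Next I would argue by contradiction and compactness. If the inclusion failed, there would exist $k_i\to\infty$ and points $z_i\in\overline{X}\setminus V$ with $g^{k_i}z_i\notin U$. By compactness of the bordification in the cone topology, after passing to subsequences $z_i\to z^{*}\in\overline{X}\setminus V$ and $g^{k_i}z_i\to w\in\overline{X}\setminus U$; in particular $z^{*}\neq\eta$ and $w\neq\xi$. The goal is then to derive a contradiction by showing that the endpoints $g^{k_i}z_i$ must in fact accumulate at $\xi$.

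The key external input is the contracting (Morse) property of a rank one geodesic, which I would quote from Ballmann's Section III.3: there exists $B=B(\sigma)\geq 0$ such that the nearest-point projection $\pi_\sigma\colon X\to\sigma(\bbr)$ maps any metric ball disjoint from the $B$-neighborhood of $\sigma$ to a set of diameter at most $B$. I would apply this to geodesic segments (or rays) $\gamma_i$ from $g^{-k_i}x_0=\sigma(-k_i\alpha)$ to points $y_i$ approximating $z_i$. The $\sigma$-parameter of the starting point is $-k_i\alpha\to-\infty$, whereas the $\sigma$-parameters of the $y_i$ remain bounded below because $z_i\to z^{*}\neq\eta$. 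Consequently $\pi_\sigma(\gamma_i)$ has diameter tending to infinity, and the contracting property forces $\gamma_i$ to enter the $B$-neighborhood of $\sigma$ and to fellow-travel an initial arc of $\sigma$ of length comparable to $k_i\alpha$. Translating by $g^{k_i}$, the geodesic from $x_0$ to $g^{k_i}y_i$ fellow-travels $\sigma$ out to parameter approximately $k_i\alpha$; letting $y_i\to z_i$ and $i\to\infty$ one concludes that $g^{k_i}z_i$ eventually enters every basic cone neighborhood $V(\sigma,t,\epsilon)$ of $\xi$, contradicting $w\neq\xi$.

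The main obstacle I anticipate is handling boundary points $z_i\in\bndry X$ uniformly, since then $\gamma_i$ must be taken to be a geodesic ray rather than a segment. This is resolved by applying $\pi_\sigma$ pointwise along the ray and using that a ray based on $\sigma$ and asymptotic to any boundary point other than $\eta$ has bounded projection on the $\eta$-side of $\sigma$; the contracting lemma then runs in exactly the same way. Once this technical input is in place, the north--south dynamics statement is immediate.
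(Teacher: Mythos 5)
The paper does not prove this lemma at all: it is quoted verbatim from Section III.3 of Ballmann's book, whose argument runs through the half-plane lemma (for a geodesic bounding no flat half-plane there are neighborhoods $U_0\ni\xi$, $V_0\ni\eta$ such that every geodesic joining a point of $V_0$ to a point of $U_0$ passes uniformly close to $\sigma(0)$), followed by a compactness argument. Your route is different: you use the contracting (Morse) property of the axis plus CAT(0) convexity to show directly that $g^{k_i}z_i\to\xi$ for any sequence $z_i\notin V$, $k_i\to\infty$. That is a legitimate and by now standard alternative, but two remarks on the input: the contraction property is not what Ballmann's Section III.3 states, and it is \emph{not} true for an arbitrary geodesic bounding no flat half-plane; it holds for the axis of a rank one \emph{axial} isometry (Bestvina--Fujiwara, Charney--Sultan), so periodicity of $L$ -- which you have -- is essential and should be invoked explicitly. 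Your symmetry reduction via $g^{-1}$ and the compactness set-up ($z_i\to z^*\neq\eta$, $g^{k_i}z_i\to w\neq\xi$) are fine.

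The one step that needs real repair is the lower bound on the $\sigma$-parameters. As written, ``the $\sigma$-parameters of the $y_i$ remain bounded below because $z_i\to z^*\neq\eta$'' is asserted, and your boundary-case fix quotes a fact about a \emph{single} ray asymptotic to a point other than $\eta$; that gives a bound depending on $z_i$, i.e.\ on $i$, while your argument needs a bound $T$ independent of $i$ (if the bound grew like $k_i\alpha$, the translated geodesics $g^{k_i}\gamma_i$ would fellow-travel $\sigma$ for no definite length and no contradiction results). The uniform bound is true and can be extracted from the hypotheses: the based rays $r_i$ from $x_0$ to $z_i$ converge uniformly on compacta to the ray $r^*$ toward $z^*\neq\eta$, whose projection to $\sigma$ is bounded below, and continuity of the nearest-point projection then bounds $\pi_\sigma\bigl(r_i(s_0)\bigr)$ below uniformly for one fixed $s_0$ and all large $i$; feeding these points into the Morse property for the geodesics issuing from $\sigma(-k_i\alpha)$ gives the uniform $T$. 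Relatedly, the double limit ``letting $y_i\to z_i$ and $i\to\infty$'' does not work as stated, since closeness of $y_i$ to $z_i$ gives no control on $d\bigl(g^{k_i}y_i,g^{k_i}z_i\bigr)$ uniformly in $i$; the correct move (which your last paragraph gestures at) is to run the argument on the ray $\gamma_i$ from $\sigma(-k_i\alpha)$ asymptotic to $z_i$ itself, so that $g^{k_i}\gamma_i$ is precisely the ray from $x_0$ representing $g^{k_i}z_i$ and the convexity estimate yields $g^{k_i}z_i\to\xi$ directly, contradicting $w\neq\xi$. With those two points fixed, your proof goes through.
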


\begin{proposition}
\label{proposition: BB}
Suppose that the limit set $\Lambda \subset \bndry X$ is non-empty. Then the following are equivalent:

\begin{enumerate}[(i)]
\item $X$ contains a $\Gamma$-periodic rank one line.
\item For each $\xi\in\Lambda$ there is a $\eta\in\Lambda$ with $d_T(\xi,\eta)>\pi$, where $d_T$ is the Tits metric.
\item There are points $\xi,\eta\in\Lambda$ with $d_T(\xi,\eta)>\pi$, where $\eta$ is contained in some minimal subset of $\bndry X$.

\end{enumerate}
\end{proposition}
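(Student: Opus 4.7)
My plan is to prove the cyclic chain (i) $\Rightarrow$ (ii) $\Rightarrow$ (iii) $\Rightarrow$ (i). For (i) $\Rightarrow$ (ii), let $g \in \Gamma$ be an axial isometry translating a rank one line $L$ with endpoints $\xi_0, \eta_0 \in \Lambda$. Since $L$ is rank one, $d_T(\xi_0,\eta_0) > \pi$. Given $\xi \in \Lambda$, if $\xi \ne \eta_0$ then Lemma \ref{lemma: rankonedynmaics} lets me push $\xi$ into any prescribed cone neighborhood of $\xi_0$ by applying a sufficiently high power $g^k$. Lower semi-continuity of the Tits metric with respect to the cone topology then yields $d_T(g^k \xi, \eta_0) > \pi$, and since $g$ acts by isometries on the Tits boundary, this becomes $d_T(\xi, g^{-k}\eta_0) > \pi$ with $\eta := g^{-k}\eta_0 \in \Lambda$ (the case $\xi = \eta_0$ is immediate by taking $\eta := \xi_0$).

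For (ii) $\Rightarrow$ (iii), Zorn's lemma applied to the poset of non-empty closed $\Gamma$-invariant subsets of $\Lambda$ produces a minimal subset $M$. Pick any $\eta \in M$; by symmetry of $d_T$ and (ii) (applied with the roles reversed), there exists $\xi \in \Lambda$ with $d_T(\xi,\eta) > \pi$, giving the pair required in (iii).

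The substantive direction is (iii) $\Rightarrow$ (i). Since $d_T(\xi,\eta) > \pi$, standard $\CAT(0)$ Tits-distance theory furnishes a geodesic line $L = L(\xi,\eta)$, and $L$ must be rank one (any geodesic bounding a flat half-plane has endpoints at Tits distance exactly $\pi$). To promote $L$ to a $\Gamma$-periodic rank one line, I would march along $L$ toward $\eta$ by points $p_n \in L$, use cocompactness of $\Gamma \lacton X$ to pick $\gamma_n \in \Gamma$ with $\gamma_n^{-1} p_n$ in a compact fundamental domain, and extract a subsequential limit of the translated lines $\gamma_n^{-1} L$. To upgrade this limit to an actual translation axis, I would exploit the minimality of $M$ containing $\eta$: density of the orbit $\Gamma \eta$ in $M$ permits a ping-pong construction in which an element $\gamma \in \Gamma$ has attracting and repelling fixed points arbitrarily close to $\eta$ and $\xi$ respectively. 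The contracting behavior of the rank one geodesic $L$ (projection onto $L$ is coarsely well defined) then forces any such $\gamma$ with sufficiently strong north-south dynamics to be axial with a rank one axis, producing (i).

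The main obstacle lies in the last paragraph: converting the existence of a single rank one geodesic $L(\xi,\eta)$ into a periodic axis. The essential inputs are the contracting/hyperbolic-like behavior of rank one geodesics in a $\CAT(0)$ space, and the density of $\Gamma \eta$ in $M$ supplied by minimality. Packaging these so as to exhibit a concrete $\Gamma$-element whose boundary dynamics genuinely realizes axial translation (rather than merely an accumulation of approximations) is the nontrivial point.
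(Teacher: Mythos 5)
First, a point of orientation: the paper does not prove this proposition at all --- it is quoted as Proposition 1.10 of Ballmann--Buyalo \cite{BB1}, so there is no in-paper argument to compare yours against. Judged on its own terms, your proposal has a genuine gap. The implications (i) $\Rightarrow$ (ii) and (ii) $\Rightarrow$ (iii) are essentially fine (note only that $g^{-k}\eta_0=\eta_0$ since $g$ fixes the endpoints of its axis, and that Zorn's lemma needs compactness of $\Lambda$, which holds because $X$ is proper). The problem is (iii) $\Rightarrow$ (i), which is the entire content of the result and which you leave as a plan rather than a proof. Getting a rank one line $L(\xi,\eta)$ from $d_T(\xi,\eta)>\pi$ is indeed standard for proper $\CAT(0)$ spaces, but everything after that is unsubstantiated: a subsequential limit of translates $\gamma_n^{-1}L$ is merely another geodesic line, not a periodic one, and your appeal to cocompactness is not even available --- the proposition assumes only that $\Lambda\neq\emptyset$ for a properly discontinuous isometric action, which is the generality in which Ballmann--Buyalo prove it and in which the paper quotes it.

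The missing idea is the precise mechanism converting boundary dynamics into an axial element. One needs (a) Ballmann's lemma (Section III.3 of \cite{Ballman}) that for a rank one geodesic $\sigma$ there are neighborhoods $U\ni\sigma(-\infty)$ and $V\ni\sigma(\infty)$ in $\overline{X}$ such that any isometry $g$ with $g(\overline{X}\setminus U)\subset V$ and $g^{-1}(\overline{X}\setminus V)\subset U$ is axial with a rank one axis running from a point of $U$ to a point of $V$; and (b) an argument, exploiting that $\eta$ lies in a minimal set and $\xi,\eta\in\Lambda$, that such a $g\in\Gamma$ actually exists, i.e.\ a group element whose attracting behaviour is controlled near $\eta$ \emph{and} whose repelling behaviour is controlled near $\xi$. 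Density of $\orb_{\Gamma}(\eta)$ in the minimal set only controls one side; arranging the two-sided control is exactly the work done in the Ballmann--Buyalo proof, and your sketch explicitly acknowledges but does not supply it. As written, the key implication is therefore not proved; you should either reproduce the Ballmann--Buyalo construction in detail or simply cite \cite{BB1}, as the paper does.
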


\medskip
  Recall that the {\it Tits metric} is the length metric associated to the angular metric on $\bndry X$. We refer the reader to \cite{BH1} Chapter II.9 for the required background. Though not stated in Proposition \ref{proposition: BB}, it is clear from the proof provided Ballmann and Buyalo \cite{BB1} that the end points of the $\Gamma$-periodic rank one line can be found arbitrarily close to the points $\xi$ and $\eta$. This is precisely the way in which Proposition \ref{proposition: BB} will be used below. We will also need the following:
  \medskip

  \begin{lemma}
\label{lemma: minimalaction}
Let $\Gamma$ be a one-ended group acting geometrically on a $\CAT(0)$ space with isolated flats. The action of $\Gamma$ on $\bndry X$ is minimal.
\end{lemma}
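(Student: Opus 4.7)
My plan is to prove that every non-empty closed $\Gamma$-invariant subset $M\subset\bndry X$ must equal all of $\bndry X$; combined with the earlier fact $\Lambda(\Gamma)=\bndry X$, this gives minimality. The two main tools are Proposition~\ref{proposition: BB} (existence of rank one axes with Tits-far endpoints) and Lemma~\ref{lemma: rankonedynmaics} (north-south dynamics of rank one axes).

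First I would verify the hypotheses of Proposition~\ref{proposition: BB}, namely that $X$ contains a $\Gamma$-periodic rank one line. This is where the assumptions enter: the statement is only meaningful when $\Gamma$ is non-elementary, since if $\Gamma$ is itself virtually abelian then $\bndry X\cong S^{n-1}$ and $\Gamma$ fixes it pointwise, which is not minimal. Excluding this degenerate case, the isolated-flats structure together with one-endedness (see Hruska--Kleiner) produces a $\Gamma$-periodic rank one axis in $X$, so Proposition~\ref{proposition: BB} applies.

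Second, I would exploit this proposition to enlarge $M$. Fix any $\xi\in M$; by Proposition~\ref{proposition: BB}(ii) there is $\eta\in\Lambda(\Gamma)=\bndry X$ with $d_T(\xi,\eta)>\pi$, and by the remark following that proposition there is an axial $g\in\Gamma$ whose rank one axis $L(\xi',\eta')$ has $\xi'$ and $\eta'$ arbitrarily close to $\xi$ and $\eta$ respectively. Arranging $\xi'\neq\xi$, Lemma~\ref{lemma: rankonedynmaics} applied with a small neighborhood $V$ of $\xi'$ missing $\xi$ yields $g^{k}\xi\to\eta'$, so that $\eta'\in M$ by closure and $\Gamma$-invariance. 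Since $\eta'$ may be chosen arbitrarily close to $\eta$ and $M$ is closed, $\eta\in M$ as well.

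Third, to conclude $M=\bndry X$ I would iterate: the previous step shows $M$ absorbs every point at Tits distance greater than $\pi$ from some member of $M$. For an arbitrary target $\zeta\in\bndry X$, one then produces a finite chain $\xi=\zeta_0,\zeta_1,\dots,\zeta_n=\zeta$ with consecutive Tits distances exceeding $\pi$, and iterated application of the second step forces $\zeta\in M$. The main obstacle is this last step. I would handle it either by invoking the structure of the Tits boundary in the isolated-flats setting, or, more robustly, via a double-density argument: a standard ping-pong construction with the rank one axis from Proposition~\ref{proposition: BB} shows that pairs of endpoints of $\Gamma$-periodic rank one axes are dense in $\bndry X\times\bndry X$ off the diagonal, so the orbit closure $\overline{\Gamma\cdot\xi}$ meets every non-empty open subset of $\bndry X$, forcing $M=\bndry X$.
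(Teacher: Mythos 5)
Your route is genuinely different from the paper's. The paper disposes of the lemma in a few lines without any dynamics: a proper, closed, $\Gamma$-invariant set $M\subset\bndry X$ is pushed forward under Tran's quotient map $f\colon\bndry X\to\relbndry$ (Theorem \ref{theorem:TranTheorem}); $f$ is a closed equivariant map, and the null-family/upper semicontinuity of the decomposition (Propositions \ref{proposition:nullfamily} and \ref{proposition:uppersemi}) is used to see that $f(M)$ is still a proper closed invariant subset, contradicting Bowditch's minimality of the action on the Bowditch boundary. Your argument stays inside $\bndry X$ and uses rank one dynamics (Proposition \ref{proposition: BB} and Lemma \ref{lemma: rankonedynmaics}); this is closer in spirit to the paper's proof of Proposition \ref{proposition: movingaround}, avoids quoting Bowditch's minimality, and there is no circularity since the paper only uses the present lemma afterwards. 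What the paper's order of proof buys is exactly the point where your argument gets delicate: once minimality is known, the ``point in a minimal subset'' hypothesis of Proposition \ref{proposition: BB}(iii) is automatic; you, proving minimality, must supply it by hand (take $\xi$ in a minimal subset of $M$, which exists by Zorn's lemma).

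The genuine gap is your third step, and of your two proposed fixes the ``more robust'' one does not work as stated. Density of endpoint pairs of periodic rank one axes in $\bndry X\times\bndry X$ cannot be extracted from ping-pong with one axis: to place an endpoint of a periodic rank one axis near an arbitrarily prescribed target via Proposition \ref{proposition: BB} and the remark following it, you need the target pair to satisfy the hypotheses of (iii), i.e.\ one of the two points must lie in a minimal set, and knowing that minimal sets accumulate on every open subset of $\bndry X$ is essentially the minimality being proved. The first fix is the correct one and should be carried out explicitly: by Hruska--Kleiner (as quoted in the proof of Proposition \ref{proposition: movingaround}) the components of the Tits boundary are the spheres $\bndry F$ and isolated points, so any two points not lying in a common $\bndry F$ have $d_T=\infty>\pi$. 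Hence your second step, applied with $\xi$ in a minimal subset of $M$, already gives $M\supset\bndry X\setminus\bndry F$ (or $M\supset\bndry X\setminus\{\xi\}$ if $\xi$ is Tits-isolated), and closedness of $M$ together with Corollary \ref{corollary:limitsetcorollary} (every point of $\bndry F$ is a limit of points of $\bndry X\setminus\bndry F$) yields $M=\bndry X$; no long chains are needed. Two smaller points: the existence of a $\Gamma$-periodic rank one line should be derived, not just attributed to Hruska--Kleiner --- it follows from Proposition \ref{proposition: BB}(iii) applied to a Tits-isolated point and a point of some minimal set, again using the Tits component structure and the assumption that $\Gamma$ is not virtually abelian (an exclusion that is indeed necessary, and which the paper makes implicitly through its relatively hyperbolic structure); and in your second step you should justify that one can arrange $\xi'\neq\xi$, or treat the case $\xi'=\xi$ separately, before applying the north--south dynamics of Lemma \ref{lemma: rankonedynmaics}.
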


\begin{proof} Assume not, then $\bndry X$ contains a closed $\Gamma$-invariant set $M$. If $f\colon\bndry X \to \relbndry $ is the equivariant quotient map defined in Theorem \ref{theorem:TranTheorem}, then $f$ is a closed map by Proposition 1 of \cite{Daverman}. Thus $f(M)$ is closed and $\Gamma$-invariant. From
\cite{Bow1} we have that the action of $\Gamma$ on $\relbndry$ is minimal. Thus, if $f(M)$ is a proper subset of $\relbndry$ we will obtain a contradiction.

As $M$ is properly contained in $\bndry X$ we may find a neighborhood $U$ of $M$ which is properly contained in $\bndry X$. By upper semi-continuity of the decomposition (see Proposition \ref{proposition:uppersemi}) we have the $Sat(M)\subseteq U$. Thus $\bndry X\setminus Sat(M)\neq \emptyset$, which implies that $f(M)$ is properly contained in $\relbndry$. \end{proof}

\begin{proposition}
\label{proposition: movingaround}
Let $K$ be a proper closed subset of $\bndry X$, then for any $U$ open set in $\bndry X$ we may find a homeomorphic copy $K'$ of $K$ such that $K'\subset U$.
\end{proposition}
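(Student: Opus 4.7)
The strategy is to realize the desired homeomorphic copy as $K' = g^n(K)$ for some $\Gamma$-periodic rank-one axial isometry $g \in \Gamma$ and some sufficiently large integer $n$, exploiting the North-South dynamics of Lemma \ref{lemma: rankonedynmaics} to push $K$ into $U$. Because $K$ is a proper closed subset of $\bndry X$, the complement $W = \bndry X \setminus K$ is nonempty and open. By regularity of the compact Hausdorff space $\bndry X$, I would first choose a point $\eta_0 \in W$ together with an open neighborhood $V$ of $\eta_0$ whose closure is still contained in $W$, so that $V \cap K = \emptyset$. On the other side, fix any point $\xi_0 \in U$.

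The crux of the argument is to produce a $\Gamma$-periodic rank-one axis $L$ with one endpoint $\xi \in U$ and the other endpoint $\eta \in V$. Here I would invoke Proposition \ref{proposition: BB}, together with the remark immediately following it that the endpoints of the $\Gamma$-periodic rank-one line produced there can be chosen arbitrarily close in the cone topology to any prescribed pair of points at Tits distance greater than $\pi$. Thus it suffices to produce a pair $(\xi', \eta') \in U \times V$ with $d_T(\xi', \eta') > \pi$. I would obtain such a pair by starting from an arbitrary $\Gamma$-periodic rank-one axis (whose existence follows from Proposition \ref{proposition: BB} once one rules out the degenerate case in which all Tits distances are at most $\pi$, i.e. $\bndry X$ is a single round sphere or flat boundary) and then moving its endpoints independently into $U$ and $V$ by a ping-pong style argument: minimality of the $\Gamma$-action on $\bndry X$ (Lemma \ref{lemma: minimalaction}) lets us conjugate to place one endpoint close to $\xi_0$, and a further North-South application of Lemma \ref{lemma: rankonedynmaics} along a second rank-one axis lets us slide the other endpoint into $V$ while only slightly perturbing the first.

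With such an axis in hand, let $g \in \Gamma$ be the axial isometry along $L$, oriented so that $\xi$ is its attracting fixed point and $\eta$ its repelling fixed point. Lemma \ref{lemma: rankonedynmaics} applied to the neighborhoods $U \ni \xi$ and $V \ni \eta$ furnishes an integer $n \geq 1$ with
\[
g^n(\bndry X \setminus V) \subset U.
\]
Since $K \subset \bndry X \setminus V$ by the choice of $V$, this gives $g^n(K) \subset U$, and since $g^n$ acts on $\bndry X$ by homeomorphism, the set $K' := g^n(K)$ is a homeomorphic copy of $K$ contained in $U$, as required.

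The main obstacle I expect is the independent control of the two axis endpoints in the second paragraph: a single conjugation $\gamma g \gamma^{-1}$ of a rank-one element moves both fixed points simultaneously, so minimality alone does not immediately suffice to realize one endpoint in $U$ and the other in $V$. The resolution is the ping-pong iteration sketched above, combining minimality with the rank-one North-South dynamics to decouple the two endpoints at the cost of passing to a high power and a new conjugate of $g$.
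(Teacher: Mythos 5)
Your final step is fine: once you have a periodic rank-one axis with attracting endpoint $\xi\in U$ and repelling endpoint $\eta\in V\subset\bndry X\setminus K$, Lemma \ref{lemma: rankonedynmaics} indeed gives $g^n(K)\subset g^n(\bndry X\setminus V)\subset U$. The genuine gap is your second paragraph, where the axis with one endpoint in $U$ and the other in $V$ must be produced; you flag this as the main obstacle but do not resolve it, and the sketched resolution does not work as described. Applying a high power of a second rank-one isometry to conjugate a given axis moves \emph{both} of its endpoints toward the attracting fixed point of that second element, so you cannot ``slide the other endpoint into $V$ while only slightly perturbing the first.'' Decoupling the endpoints is possible (for instance via density of fixed-point pairs of rank-one elements, or by showing that $g_1^{m}g_2^{n}$ is rank one with fixed points near $g_1^{+}$ and $g_2^{-}$), but that is a nontrivial dynamical input which you neither prove nor cite. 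A second unaddressed point: to invoke Proposition \ref{proposition: BB} (and the remark following it) you need a pair of points at Tits distance greater than $\pi$ near the prescribed locations, and your dismissal of the ``degenerate case'' is not justified. The paper obtains this from Hruska--Kleiner's description of the Tits boundary in the isolated flats setting (the components of $\bndry_T X$ are the spheres $\bndry F$ for $F\in\F$ together with isolated points), so that a conical limit point is at Tits distance greater than $\pi$ from every other boundary point.

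The paper's proof sidesteps your obstacle by reordering the argument. It picks a conical limit point $\rho$ and a second point $\eta$, both in $\bndry X\setminus K$, with small neighborhoods $V\ni\rho$ and $W\ni\eta$ contained in $\bndry X\setminus K$; Proposition \ref{proposition: BB} then yields a periodic rank-one line with one endpoint in $V$ and the other in $W$ (neither endpoint need lie in $U$), and North--South dynamics pushes $K$ into the small set $W$. Only afterwards is minimality (Lemma \ref{lemma: minimalaction}) used, and only through a single element $\gamma$ with $\gamma\eta\in U$, with $W$ chosen small enough that $\gamma(W)\subset U$; then $\gamma g^{k}(K)\subset U$ is the desired copy. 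Restructuring your argument along these lines removes the need to place an axis endpoint inside $U$ and closes the gap.
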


\begin{proof} Let $U$ be and open subset of the boundary and let $\rho\in\bndry X\setminus K$ be a conical limit point. In Theorem 5.2.5 of \cite{HK1} Hruska and Kleiner show that components of $\bndry_T X$ are boundary spheres ${\bndry F}_{F\in \F}$ and isolated points. So let $\eta$ be another point in $\bndry X\setminus K$, then $d_T(\rho,\eta)>\pi$. Choose any neighborhoods $V$ of $\rho$ and $W$ of $\eta$ in $\bndry X\setminus K$. Then Proposition \ref{proposition: BB} implies that we may find a periodic rank one line $L$ such that the ends $L(\infty)$ and $L(-\infty)$ are in $V$  and $W$ respectively. We may then apply Lemma \ref{lemma: rankonedynmaics} to find a homeomorphic copy of $K$ in $W$ (or $V$).

By Lemma \ref{lemma: minimalaction} we have that the action of $\Gamma$ on the boundary is minimal, which implies we have that $\orb_{\Gamma}(\eta)$ is dense in $\bndry X$. Thus there exists a $\gamma\in\Gamma$ such that $\gamma\eta\in U$. Choosing $W$ small enough we have that $\gamma(W)\subset U$. As $\gamma$ is a homeomorphism $U$ contains a copy of K. \end{proof}

\medskip
We now prove the main theorem.
\medskip

\begin{proof}[Proof of Theorem \ref{theorem:MainTheorem}:] Using the toplogical characterizations of the Menger curve and Sierpinski carpet we provide a proof similar to that of Kapovich and Kleiner in Section 3 of \cite{KK}.

By hypothesis and Corollary \ref{corollary: HRcorollary}, we have that $\bndry X$ is connected, locally connected, and $1$-dimensional. Theorem \ref{theorem:CutptTheorem} gives that if $\bndry X$ has a local cut point, then $\bndry X$ is homeomorphic to $S^1$ or $\Gamma$ splits over a $2$-ended subgroup. Assume that $\bndry X$ does not have a local cut point.

The boundary of $X$ is planar, or it is not. If $\bndry X$ is planar, then it is a Sierpinski carpet by the characterization of Whyburn \cite{Whyburn}. So, assume that $\bndry X$ is non-planar. Claytor's embedding theorem \cite{Cla} then implies that $\bndry X$ contains a non-planar graph. We may now use Proposition \ref{proposition: movingaround} to show that no non-empty open subset of $\bndry X$ is planar. Thus $\bndry X$ must be a Menger curve by the topological characterization due to Anderson \cite{A58a,A58b}.\end{proof}

\section{Non-hyperbolic Coxeter groups with Sierpinski carpet boundary}
\label{sec: Sierpinski}
In this section we give sufficient conditions for the boundary of a Coxeter group with isolated flats to have a Sierpinski carpet boundary. This result is an easy consequence of Theorem \ref{theorem:MainTheorem} and results of \Swiatkowski\, \cite{Swi16}.

A {\it Coxeter system} is a pair $(W,S)$ such that $W$ is a finitely presented group with presentation $\presentation{S}{R}$ with 
\[ R=\bigset{s^2}{s\in S}\cup\bigset{ (s,t)^{m_{st}}}{s,t\in S,\, m_{st}\in\{2,3,...\infty\}\,\text{and}\, m_{st}=m_{ts}},\] and $m_{st}=\infty$ means that there is no relation between $s$ and $t$.

The {\it nerve} $L=L(W,S)$ of the Coxeter system $(W,S)$ is a simplicial complex whose $0$-skeleton is $S$ and a simplex is spanned by a subset $T\subset S$ if and only if the subgroup generated by $T$ is finite. The {\it labeled nerve} $L^{\bullet}$ of $(W,S)$ is the nerve $L$ with edges $(s,t)$ in the $1$-skeleton of $L$ labeled by the number $m_{st}$. A {\it labeled suspension} in $L^{\bullet}$ is a full subcomplex $K$ of $L$ isomorphic to the simplicial suspension of a simplex, $K=\{s,t\}\ast\sigma$, such that any edge in $K$ adjacent to $t$ or $s$ has edge label $2$. The labeled nerve is called {\it inseparable} if it is connected, has no separating simplex, no separating vertex pair, and no separating labeled suspension. The labeled nerve $L^{\bullet}$ is called a {\it labeled wheel} if $L$ is the cone over a triangulation of $S^1$ with cone edges labeled by $2$.

Associated to any Coxeter system $(W,S)$ is a piecewise Euclidean $\CAT(0)$ space called the {\it Davis complex} $\Sigma=\Sigma(W,S)$. The group $W$ acts geometrically on $\Sigma$ by reflections. Caprace \cite{Cap09} has completely determined when the Davis complex has isolated flats.


\begin{proof}[Proof of Theorem \ref{theorem: IFSwiatkowski}:]
Assume the hypotheses. In Lemmas 2.3, 2.4, and 2.5 of \cite{Swi16} \Swiatkowski\, shows that $\bndry\Sigma$ is  connected, planar, and $1$-dimensional.  Lastly inseparability of $L^{\bullet}$ implies that $W$ does not split over a virtually cyclic subgroup \cite{MT09, Swi16}, thus Theorem \ref{theorem:MainTheorem} implies that $\bndry\Sigma$ must be a circle or a Sierpinski carpet.

If $W$ is hyperbolic, then \Swiatkowski\, \cite{Swi16} shows that $\bndry\Sigma$ cannot be homeomorphic to $S^1$. Assume that $\bndry\Sigma$ is homeomorphic to $S^1$ and $W$ is not hyperbolic. Then $\Sigma$ contains a flat $F$; moreover, $F$ must be the only flat. Thus $W$ is a $2$-dimensional Euclidean group. Because the nerve $L$ is planar, $L$ must be a wheel or a triangulation of $S^1$, a contradiction.

\end{proof}

\section{Non-hyperbolic Groups with Menger Curve Boundary}
\label{sec: Tripleexample}
 In the hyperbolic setting groups with Menger curve boundary are quite ubiquitous. It is a well known result of Gromov \cite{Gro87} that with overwhelming probability random groups are hypeberbolic; subsequently, Dhamani, Guirardel, and Przytycki \cite{DGP11} have shown that with overwhelming probability random groups also have Menger curve boundary. In stark contrast no example of a non-hyperbolic group with Menger curve boundary can presently be found in the literature, leading Kim Ruane to pose the challenge of finding finding a non-hyperbolic group with Menger curve boundary.

Prior to Theorem \ref{theorem:MainTheorem} there were no known techniques for developing examples of such a group. The author claims that one example is the fundamental group of the space obtained by gluing three copies of a finite volume hyperbolic 3-manifold with totally geodesic boundary together along a torus corresponding to a cusp. This particular example was suggested to the author by Jason Mannning, and a detailed proof is to be provided in \cite{HW17}. The author believes that many examples of non-hyperbolic groups with Menger curve boundary may now be constructed in a similar fashion.

\bibliographystyle{plain}
\bibliography{mybib}{}

\end{document}